\documentclass[11pt]{amsart} 
\usepackage{amssymb, amsthm, amsmath, amscd, amsfonts}
\usepackage{latexsym}
\usepackage{verbatim}
\usepackage[all]{xy}

\usepackage[english]{babel}

\usepackage{geometry}                
\geometry{letterpaper}                   
\usepackage{graphicx}
\usepackage{amssymb}
\usepackage{epstopdf}
\DeclareGraphicsRule{.tif}{png}{.png}{`convert #1 `dirname #1`/`basename #1 .tif`.png}


\numberwithin{equation}{section}
\newtheorem{theorem}[equation]{Theorem}
\newtheorem{proposition}[equation]{Proposition}

\newtheorem{corollary}[equation]{Corollary}
\newtheorem{lemma}[equation]{Lemma}
\newtheorem{definition}[equation]{Definition}

\newtheorem{remark}[equation]{Remark}

\newcommand{\z}{\mathbb Z}       
\newcommand{\n}{\mathbb N}       

\newcommand{\ds}{\displaystyle}

\begin{document}

\title[Nearly continuous Kakutani equivalence]{The Morse Minimal System is Nearly Continuously Kakutani Equivalent to the Binary Odometer}

\author{Andrew Dykstra}
\address{Department of Mathematics\\
	Hamilton College \\
	Clinton, NY 13323 \\
	 USA}

\email{adykstra@hamilton.edu}


\author{Ay\c se \c Sah\. in}
\address{Department of Mathematics\\
	DePaul University\\
	Chicago, IL 60604\\
	USA}
	
\email{asahin@depaul.edu}



\date{\today}

\begin{abstract}

Ergodic homeomorphisms $T$ and $S$ of Polish probability spaces $X$ and $Y$ are {\em evenly Kakutani equivalent} if there is an orbit equivalence $\phi: X_0 \rightarrow Y_0$ between full measure subsets of $X$ and $Y$ such that, for some $A \subset X_0$ of positive measure, $\phi$ restricts to a measurable isomorphism of the induced systems $T_A$ and $S_{\phi(A)}$.  The study of even Kakutani equivalence dates back to the seventies, and it is well known that any two zero-entropy loosely Bernoulli systems are evenly Kakutani equivalent.  But even Kakutani equivalence is a purely measurable relation, while systems such as the Morse minimal system are both measurable and topological.  

Recently del Junco, Rudolph and Weiss studied a new relation called {\em nearly continuous Kakutani equivalence}.  A nearly continuous Kakutani equivalence is an even Kakutani equivalence where also $X_0$ and $Y_0$ are invariant $G_\delta$ sets, $A$ is within measure zero of both open and closed, and $\phi$ is a homeomorphism from $X_0$ to $Y_0$.  It is known that nearly continuous Kakutani equivalence is strictly stronger than even Kakutani equivalence, and nearly continuous Kakutani equivalence is the natural strengthening of even Kakutani equivalence to the {\em nearly continuous} category---the category where maps are continuous after sets of measure zero are removed.  In this paper we show that the Morse minimal substitution system is nearly continuously Kakutani equivalent to the binary odometer.    

\end{abstract}

\maketitle

\section{Introduction} \label{introduction} 

Even Kakutani equivalence is one of the most natural examples in the theory of restricted orbit equivalence of ergodic and finite measure preserving dynamical systems.  In this paper we study even Kakutani equivalence in the nearly continuous category.   
A nearly continuous dynamical system is given by a triple $(X,\mu,T)$, where $X$ is a Polish space, $\mu$ is a Borel probability measure on $X$, and $T:X\rightarrow X$ is an ergodic measure preserving homeomorphism.   Recall that a measurable orbit equivalence between two such systems $(X,\mu,T)$ and $(Y,\nu,S)$ is an invertible, bi-measurable, and measure preserving map $\phi:X\rightarrow Y$ that sends orbits to orbits.  A measurable orbit equivalence $\phi: X \rightarrow Y$ is a {\em nearly continuous orbit equivalence} if there exist invariant and $G_\delta$ subsets $X_0\subset X$ and $Y_0\subset Y$ of full measure so that $\phi:X_0\rightarrow Y_0$ is a homeomorphism.  

The first result in this category is the celebrated theorem of Keane and Smorodinsky \cite{KS1} that any two Bernoulli shifts of equal entropy are finitarily isomorphic, namely, that the isomorphism between them can be made a homeomorphism almost everywhere. In a later paper, Denker and Keane \cite{DK} established a general framework for studying measure preserving systems that also preserve a topological structure.   We refer the reader to a paper by del Junco, Rudolph, and Weiss \cite{JRW} for a more complete history of the area.  We only mention here that interest in the orbit equivalence theory for this category was more recently revived by the work of Hamachi and Keane in \cite{HK} where they proved that the binary and ternary odometers are nearly continuously orbit equivalent.  Their work inspired similar results for other pairs of examples (see \cite{HKR}, \cite{HKY}, \cite{R1}, \cite{R2}, \cite{RR4}, and \cite{RR3}).  These examples were later subsumed as special cases of a Dye's Theorem in this category proved by del Junco and \c Sahin \cite{JS}.   

Around the same time as a nearly continuous Dye's Theorem was established,  del Junco, Rudolph, and Weiss proved in \cite{JRW} that if one does not impose the condition that the invariant sets of full measure on which the orbit equivalence is a homeomorphism are $G_\delta$ sets, then any restricted orbit equivalence classification is exactly the same as in the measure theoretic case.  In particular, they showed that any orbit equivalence can be regularized to be a homeomorphism on a set of full measure, but could not prove that the set of full measure had any topological structure. 

The importance of the topological structure in the theory is even more striking for the study of even Kakutani equivalence.  Recall that in the measurable category two ergodic and finite measure preserving systems $(X,\mu,T)$ and $(Y,\nu,S)$ are even Kakutani equivalent if there exists a measurable orbit equivalence $\phi: X\rightarrow Y$, and measurable sets $A\subset X, B\subset Y$ with $\mu A=\nu B>0$ with the property that $\phi:A\rightarrow B$ is a measurable isomorphism of the induced transformations $T_A$ and $S_B$.  We call the orbit equivalence $\phi$ an {\it even Kakutani equivalence} between $T$ and $S$.  It follows from \cite{JRW} that any even Kakutani equivalence can be made to be a homeomorphism on a set of full measure.  In the same paper they show that if one imposes the additional condition that the sets $A$ and $B$ be {\it nearly clopen}, meaning within a set of measure zero of an open set and also of a closed set, then there is a new invariant for even Kakutani equivalence of nearly continuous dynamical systems called {\it near unique ergodicity}.  They use this new invariant to show that nearly continuous even Kakutani equivalence  is stronger than measure theoretic even Kakutani equivalence.  The example they construct is, in some sense, not natural, and begs the question whether there are any natural examples of nearly continuous systems that are measurably evenly Kakutani equivalent but not nearly continuously so.  

  Rudolph began looking for examples in the family of zero entropy Loosely Bernoulli systems.  Recall that any two zero entropy Loosely Bernoulli transformations are measurably even Kakutani equivalent.  Furthermore, many natural examples of nearly continuous systems including rotations, all adding machines,  and in fact all finite rank transformations, are Loosely Bernoulli.    In \cite{RR1}, Roychowdhury and Rudolph  proved that any two adding machines are nearly continuously even Kakutani equivalent.  Shortly after, Dykstra and Rudolph  showed in \cite{DR} that all irrational rotations are nearly continuously Kakutani equivalent to the binary odometer.  
  
  In \cite{RR1}, new machinery, called {\em templates}, was introduced to construct the nearly continuous Kakutani equivalence.  There, templates were defined using the natural topological tower structure present in adding machines.  The construction in \cite{DR} showed that the template machinery can be adapted to the case where the underlying system does not have a canonical symbolic structure.    More recently, Springer \cite{Spr} expanded on their ideas and adapted templates further to prove that all minimal isometries of compact metric spaces are nearly continuously Kakutani equivalent to the binary odometer.   Salvi \cite{Sal} adapted templates to the setting of $\mathbb R$ actions and used the machinery to prove Rudolph's Two-Step Coding Theorem in the nearly continuous category.

Each result mentioned above has required more sophisticated and technically intricate incarnations of templates.  On the other hand each proof has also established the usefulness and flexibility of the machinery.  In this paper we adapt the template machinery even further to show our main result:
\begin{theorem}
The Morse minimal system is nearly continuously even Kakutani equivalent to the binary odometer.
\end{theorem}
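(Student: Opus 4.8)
We prove the theorem by adapting the template machinery of \cite{RR1} and \cite{DR} to the symbolic setting of the Thue--Morse substitution. We work with both systems in their natural combinatorial form: the binary odometer $(Y,\nu,R)$, where $R$ is the add-one map on the $2$-adic integers, and the Morse subshift $(X,\mu,T)$, where $T$ is the left shift and $X$ is the orbit closure of the fixed point of $\tau\colon 0\mapsto 01,\ 1\mapsto 10$. Since $\tau$ is primitive and recognizable, $X$ is uniquely ergodic and carries a canonical sequence of Kakutani--Rokhlin castles: at level $n$ the castle has height $2^n$ and two columns with names $\tau^n(0)$ and $\tau^n(1)=\overline{\tau^n(0)}$, which nest inside level $n+1$ according to the rule $\tau^{n+1}(a)=\tau^n(a)\,\tau^n(\bar a)$. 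The odometer has the parallel sequence of castles, each with a single column of height $2^n$. All bases and levels of all of these castles are cylinder sets, hence clopen, and the castle sequences generate the respective topologies and $\sigma$-algebras; this clopen structure is what makes near-continuity arguments possible, and it plays the role of the clopen tower structure exploited in \cite{RR1}, \cite{DR}, and \cite{Spr}.

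The first step is to extend the notion of a template so that the source castle may carry several columns linked by a symbolic symmetry---here the two Morse columns, interchanged by $a\leftrightarrow\bar a$---while the target castle is the one-column odometer castle, and so that a coherent infinite sequence of such templates still produces, in the limit, a nearly continuous orbit equivalence restricting to an isomorphism of induced systems on a nearly clopen set. Concretely, a template at stage $n$ must record, for each of the two height-$2^n$ Morse columns, how it is carried by Kakutani expansion and contraction onto a matching portion of an odometer castle of some height $2^{m_n}$, how the symbolic names are aligned level by level, and how an error region of controlled measure is treated; the compatibility conditions between consecutive templates must respect both the substitution nesting on the Morse side and the dyadic nesting on the odometer side.

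The second step is the inductive construction. Using the adapted templates we build a nested sequence of templates $\mathcal T_n$ and partial orbit equivalences $\phi_n$ so that: $\phi_{n+1}$ agrees with $\phi_n$ off a set of measure less than $2^{-n}$; the names of matched levels agree off a set of measure less than $2^{-n}$, which forces the limiting map to carry the Morse topology to the odometer topology; and the Kakutani data stabilize, yielding in the limit a nearly clopen set $A\subset X$ and a nearly clopen set $B\subset Y$ with $\mu(A)=\nu(B)>0$ on which the induced maps are matched level by level. Both $X$ and $Y$ are zero-entropy loosely Bernoulli systems (both are finite rank), so they are measurably evenly Kakutani equivalent already; what the construction adds is the realization of such an equivalence inside the nearly continuous category, with $A$ necessarily a proper subset of $X$ since the Morse system is not measurably isomorphic to the odometer. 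In the third step we pass to the limit: $\phi=\lim_n\phi_n$ is well defined on an invariant $G_\delta$ subset of full measure in $X$, is a measure- and orbit-preserving homeomorphism onto an invariant $G_\delta$ subset of full measure in $Y$, and restricts to a (measurable, hence nearly continuous) isomorphism of $T_A$ and $R_B$; the $G_\delta$ and nearly-clopen claims follow from the cylinder-set structure of the castles together with the summable error bounds, as in \cite{RR1} and \cite{DR}.

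The main obstacle lies in the first two steps, and its source is that the Morse system is \emph{not equicontinuous}: unlike the odometers of \cite{RR1} or the minimal isometries of \cite{Spr}, it is a proper, generically two-to-one, topological extension of its maximal equicontinuous factor, the binary odometer. Consequently its level-$n$ columns are not translates of a single column, and the junctions where $\tau^n(a)$ is concatenated with $\tau^n(\bar a)$ to form a level-$(n+1)$ column are precisely the places where near-continuity of the limiting map is most fragile. The technical heart of the argument is to design the expansions and contractions prescribed by the templates so that error does not accumulate at these column boundaries while the two-column Morse structure is reconciled with the one-column odometer structure; equivalently, one must control how a nearly continuous section of the factor map $X\to Y$ jumps across column boundaries and absorb those jumps into the Kakutani error region, keeping its total measure summable.
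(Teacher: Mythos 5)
Your outline follows the same broad strategy as the paper --- a tower-by-tower inductive construction of partial maps via templates, errors confined to small safe zones, good sets on which all candidate maps agree, and a limiting argument producing a homeomorphism on a $G_\delta$ set --- and you correctly isolate the new difficulty relative to \cite{RR1}, \cite{DR}, and \cite{Spr}: the Morse castle has two columns at each stage. But the proposal stops exactly where the proof has to begin. You write that ``the technical heart of the argument is to design the expansions and contractions prescribed by the templates so that error does not accumulate at these column boundaries while the two-column Morse structure is reconciled with the one-column odometer structure,'' and then you do not supply that design. The paper's resolution is concrete and combinatorial: besides a global reordering map that aligns the intermediate block structure of an arbitrary basic template with that of the zero-template, one needs an \emph{intermediate} reordering map that shifts every other local block down by exactly one block length, and this works only because of a specific property of the Morse substitution --- the flip of the Morse sequence, shifted by one position, agrees with the Morse sequence in every other coordinate (Proposition \ref{everyotherproperty}). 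That fact is what produces a good set of density roughly $1/2$ on which all partial interval bijections match after reordering, and the independence of the good sets across stages (Proposition \ref{independenceofgoodsets}) is what makes Borel--Cantelli give a full-measure $G_\delta$ domain. Nothing in your proposal identifies this mechanism or a substitute for it, so the induction cannot be closed as written.

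A second, more structural concern: you propose that ``$\phi_{n+1}$ agrees with $\phi_n$ off a set of measure less than $2^{-n}$.'' Summable modification gives a.e.\ convergence, but that is exactly the measurable-category regularization of \cite{JRW}, which yields a homeomorphism on a full-measure set carrying no $G_\delta$ structure. To obtain an invariant $G_\delta$ domain on which the limit is a homeomorphism, the construction must never revise the map where it has been committed: once a level lies in the good set at some stage, every later partial interval bijection must \emph{extend} the earlier one exactly, and the domain of convergence is the set of points lying in infinitely many good sets. This is why the paper builds at each stage an entire family of mutually matching partial interval bijections (one per template, with sticky notes and overlapping concatenations to guarantee that extensions exist), rather than a single map that gets corrected by a small amount. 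Your phrase ``the Kakutani data stabilize'' gestures at the right idea, but the $2^{-n}$-modification framing is the wrong convergence mechanism for the nearly continuous category and would have to be replaced by exact stabilization on good sets.
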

The version of the template machinery in this paper is designed to address the new complication of the additional tower present in the rank two  Morse system.  We believe the generalization we give here is the appropriate starting place to prove more generally that finite rank nearly continuous systems are all nearly continuously Kakutani equivalent to the binary odometer.

Finally we note that this manuscript is a culmination of work that the first author began in 2009 while he was a post-doctoral fellow working with Daniel Rudolph at Colorado State University.  The initial architecture of the constructions and the main ideas were all established collaboratively by Dykstra and Rudolph.  The second author joined the project after the untimely death of Rudolph in 2010, and the manuscript was completed in 2014.

\section{Template Machinery}

In this section, deferring some formal definitions until later, we give an overview of the construction and introduce templates.  Let $(X,T,\mu)$ denote the Morse minimal system and $(Y,S,\nu)$ the binary odometer.  Recall that each system has a canonical refining, generating sequence of clopen partitions that are given by the finite rank structure of each system.  The Morse system is rank two, so at each stage the partition is defined by a pair of towers. The odometer is rank one, so the sequence of partitions is defined by a sequence of single towers.  The construction of the orbit equivalence uses an inductive \lq\lq back and forth\rq\rq procedure.  Intuitively, at each stage we need to construct a set map from the levels of the tower of one system to the levels of the tower of the other system, switching the domain and range of the set maps at each stage.   The orbit equivalence will be defined on the set of points for which our procedure will converge.  In order for this set to be a $G_\delta$ set, and the map to be a homeomorphism, for each point where we have convergence we need to have the procedure stabilize after a finite number of steps.  In other words, once we have defined the set map at a particular stage $n$, we cannot modify its domain at any successive stage.  

This introduces an obvious complication in the construction.  At a particular stage $n$, we have to know that our choice of the set map on stage $n$ towers will be consistent with 
the choices that we will make for all stages after.  To address this complication, informally speaking, we do not actually choose a particular set map at any stage.  Instead, at each stage we construct a collection of set maps that are possible extensions of previous stage set maps, and that all agree on a set we call {\it the good set}.  The convergence then depends on us being able to provide enough choices at each stage $n$ so that it is possible to construct sufficiently many choices of maps at stage $n+1$ that extend the $n$-stage maps.

Templates are a combinatorial tool that have been designed to facilitate the intensive book keeping required to describe such a procedure.  Formally, {\em template} is an ordered multiset.  For example, the multiset $\{a, a, b, b, c\}$, together with the ordering $a \prec a \prec b \prec c \prec b$, gives a template $\tau$, which we write as 
 $$
 \tau: a \prec a \prec b \prec c \prec b. 
 $$
 The elements of a template (with multiplicity) are called {\em levels}.    In our work, each level of a template will correspond to a clopen set that is a level of a tower.  In particular, the towers themselves can be thought of as templates where each level appears exactly once and the ordering on the levels is exactly the ordering on the sets that is imposed by the underlying dynamics.  
 
 Notice that a set map from one tower to another can be thought of as a re-ordering of the levels of the domain tower according to the levels that they are being mapped to in the image tower.  We replace the notion of a set map with maps between templates, where the re-ordering is not described by the map, but rather the ordering given by the image template. 

More formally, given templates $\tau$ and $\tau'$, written \[\tau: c_0 \prec c_1 \prec \cdots \prec c_{n-1} \hspace{1cm} \mbox{and} \hspace{1cm} \tau': d_0 \prec d_1 \prec \cdots \prec d_{m-1},\]  we {\em represent} $\tau$ and $\tau'$ with the intervals \[I = [0, 1, \ldots, n-1] \subset \z \hspace{1cm} \mbox{and} \hspace{1cm} J = [0, 1, \ldots, m-1] \subset \z\] via the correspondences $c_i \leftrightarrow i$ and $d_i \leftrightarrow i$.  A {\em partial interval bijection} is an ordered quintuple $\hat f = [I, J, A, B, f]$, where $A \subset I$, $B \subset J$, and $f: A \rightarrow B$ is some bijection. 

This perspective allows for an explicit combinatorial understanding of how many maps need to be defined at each stage in order to construct maps at later stages, on larger domains.  It also allows for an explicit combinatorial description of how maps from one stage are constructed from maps of a previous stage, so it is easy to prove that the construction has indeed stabilized for points on a $G_\delta$ set of full measure.

\subsection{The Induction}

In our proof we will construct an increasing sequence $(k_n)$, templates for each system and partial interval bijections between template sets that constitute the ``back and forth" diagram given in Figure \ref{thefigure} below.

\begin{figure} 
\centering
\scalebox{.5}{\includegraphics[width=1.25\textwidth]{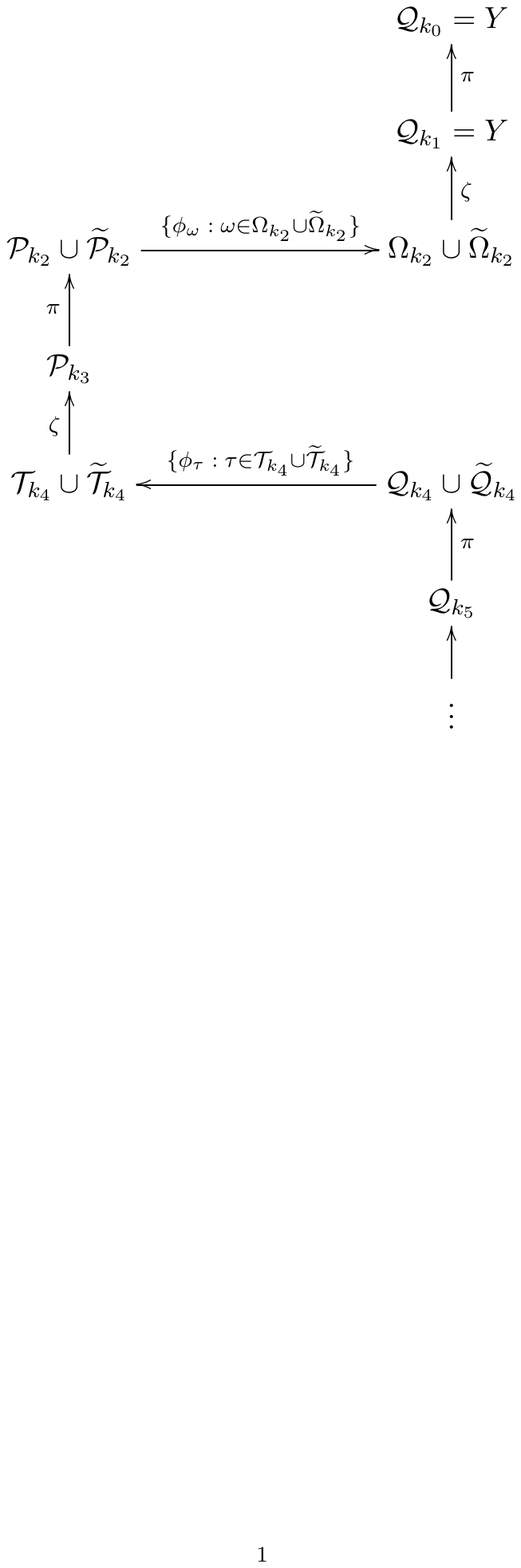}}
\caption{The ``back and forth" diagram \label{thefigure}}
\end{figure}


The objects in the diagram are template sets.  The template sets on the left ($\mathcal P_{k_n}$, $\widetilde{\mathcal P}_{k_n}$, $\mathcal T_{k_n}$, and $\widetilde{\mathcal T}_{k_n}$) belong to the Morse minimal system, while those on the right ($\mathcal Q_{k_n}$, $\widetilde{\mathcal Q}_{k_n}$, $\Omega_{k_n}$, and $\widetilde{\Omega}_{k_n}$) belong to the binary odometer.  The maps $\phi_*$ are partial interval bijections.  A key ingredient of the diagram is its {\it almost commutative} nature, as introduced by Roychowdhury and Rudolph in \cite{RR1}.  Interpreting the levels of templates as levels of towers that form a refining sequence of partitions, we see that every level at stage $n$ of a tower is a subset of a level from a previous stage template, and the maps $\zeta$ and $\pi$ are the natural inclusion maps.  The consistency of set maps from one level to another is achieved by requiring that on the good set, all partial interval bijections agree when composed with $\zeta$ and $\pi$. 

There are two key differences our work here and that of \cite{RR1} or \cite{DR} in how we use templates from a particular stage to construct later stage templates.  In particular, we cannot use the notion of concatenation as was defined in the earlier papers, instead we define {\it overlapping concatenations}.  In addition, to accommodate the combinatoric structure of the towers of the Morse minimal system we introduce a new family of partial interval bijections called {\it reordering maps}.    Once the diagram is built up, however, the argument that it produces a well-defined nearly continuous Kakutani equivalence is nearly identical to the arguments in both \cite{RR1} and \cite{DR}.   We include it here for completeness (see Sections \ref{convergenceofPIBs} and \ref{kakequivalence}).  

It is our hope that one day we might discover a  more general machine that could characterize broad classes of systems, perhaps even all zero entropy loosely Bernoulli systems.  But at the moment it is not clear how such a machine, if one exists, could be sufficiently general to account for the differences between systems. 

\subsection{The Organization of the Paper}

The paper is organized as follows:
 \begin{description}
\item[Section \ref{overview}] We define the tools for constructing templates and partial interval bijections that we will use throughout the construction.
\item[Sections \ref{morsepreliminaries} - \ref{binarypreliminaries}]  We give preliminary definitions of the Morse minimal system and the binary odometer.  In particular, we define the template sets $\mathcal P_{k}$, $\widetilde{\mathcal P}_{k}$, $\mathcal Q_{k}$, and $\widetilde{\mathcal Q}_{k}$, for $k \geq 0$.  These template sets are given by the towers in the respective system.
\item[Section \ref{omegaksection}] We define the template sets $\Omega_k$ and $\widetilde \Omega_k$, for $k \geq 0$. These are templates in the odometer system that are rearrangements of the tower templates, reflecting set maps that map Morse towers to odometer towers.
\item[Section \ref{stage2}] We construct stage $n = 2$ of the diagram by explicitly defining $k_2$, along with the partial interval bijections $\phi_\omega: \mathcal P_{k_2} \cup \widetilde{\mathcal P}_{k_2} \rightarrow \Omega_{k_2} \cup \widetilde{\Omega}_{k_2}$. 
\item[Section \ref{tksection}] We define the template sets $\mathcal T_k$ and $\widetilde{\mathcal T}_k$, for $k \geq 0$.  These are new templates in the Morse system, describing how odometer towers will be mapped to Morse towers.
\item[Section \ref{sequences}] We define the sequence $(k_n)$ recursively.
\item[Sections \ref{freqnotation} - \ref{stage8}] We assume the diagram has been built down to stage $n$, where $n$ is even, and show how to build it down to stage $n+2$.  Because the construction depends on whether $n \equiv 2$ or $n \equiv 0 \mod 4$, we proceed as follows: \begin{itemize}
\item  In Sections \ref{freqnotation} - \ref{blockpartitionsoftauhat}, we introduce notation and machinery that is used for every even $n$.
\item In Sections \ref{thegoodset} - \ref{stage4missingextra}, we illustrate the $n \equiv 2 \mod 4$ case by building the diagram down to stage $4$.
\item In Sections \ref{stage6bigpicture} - \ref{stage6missingextra}, we illustrate the $n \equiv 0 \mod 4$ case by building the diagram down to stage $6$.
\item In Section \ref{stage8}, we indicate how the induction looks in stages $n \geq 8$. 
\end{itemize}
\item[Sections \ref{convergenceofPIBs} - \ref{kakequivalence}] We use the properties of the diagram to prove that our procedure produces a nearly continuous Kakutani equivalence between the two systems.

\end{description}

\section{Partial Interval Bijections and Concatenations} \label{overview}

In this section we introduce the terminology and tools necessary to build and extend partial interval bijections.

\begin{definition} \label{domainandrange} \rm
Given templates $\tau$ and $\tau'$, and a partial interval bijection $\hat f = [I, J, A, B, f]$ from $\tau$ to $\tau'$, the {\em domain} of $\hat f$ consists of those levels in $\tau$ that are represented by $A$.  The {\em range} of $\hat f$ consists of those levels in $\tau'$ that are represented by $B$.
\end{definition} 

For the next definition, suppose $\tau'': e_0 \prec e_1 \prec \cdots \prec e_{m-1}$ is another template which, like $\tau'$, is represented by $J\subset \z$.  Suppose there is a partial interval bijection $\ds \hat{g} = [I, J, A, C, g]$ from $\tau$ to $\tau''$.  Note that $\ds \hat f$ and $\ds \hat{g}$ agree in their first three components ($I$, $J$, and $A$). 

\begin{definition} \label{match} \rm

Suppose $\hat f: \tau \rightarrow \tau'$ and $\ds \hat{g}: \tau \rightarrow \tau''$ are two partial interval bijections, given by $\hat f = [I, J, A, B, f]$ and $\hat g = [I, J, A, C, g]$.  Then $\hat f$ and $\hat g$ {\em match} if, for each integer $i \in A$, the level in $\tau'$ that is represented by $f(i)$ is identical to the level in $\tau''$ that is represented by $g(i)$.
\end{definition}

 \begin{definition} \rm
 Two partial interval bijections are {\em equivalent} if one is a translate of the other.  More precisely, $[I, J, A, B, f] \sim [I', J', A', B', f']$ if there exist $t, s \in \z$ and $I' = I + t$, $J' = J+s$, $A' = A+t$, $B' = B+s$, and $f'(i+t) = f(t) + s$.
 \end{definition}

\begin{definition} \rm
Given a partial interval bijection $\hat f = [I, J, A, B, f]$, the {\em inverse} of $\hat f$ is the partial interval bijection $\hat f^{-1} = [J, I, B, A, f^{-1}]$.
\end{definition}

\subsection{Simple Concatenations}

Suppose $\hat f_i = [I_i, J_i, A_i, B_i, f_i]$ for $i = 1, 2$ are two partial interval bijections, and assume each $I_i$ and $J_i$ begin at $0$.  Let $t = \#I_1$ and $s = \#J_1$.  Define the {\em simple concatenation} of $\hat f_1$ and $\hat f_2$ by \[\hat f_1 \ast \hat f_2 = [I_1 \cup (I_2 + t), J_1 \cup (J_2 + s), A_1 \cup (A_2 + t), B_1 \cup (B_2 + s), f],\] where \[f(i) = \begin{cases} f_1(i) & \mbox{ if }i \in A_1; \\ f_2(i-t)+s & \mbox{ if }i \in A_2+t \end{cases} \]  Note that this is an associative semigroup action on the space of all partial interval bijections.  Also note that, if $\hat f_1$ and $\hat f_2$ are partial interval bijections, then so is $\hat f_1 \ast \hat f_2$.

\subsection{Sticky Notes}  \label{stickynotes}

For our construction, some partial interval bijections will need to be decomposed into a form \begin{equation} \label{stage2PIBstructure} \hat f = \hat f(1) \ast \hat f(2) \ast \hat f(3),   \end{equation} where $\hat f(1)$ and $\hat f(3)$ make up a very small portion of the overall map.  







  
\noindent We will refer to $\hat f(1)$ and $\hat f(3)$ as the {\em bottom} and {\em top sticky notes} of $\hat f$.  The {\em body} is $\hat f(2)$.

\subsection{Overlapping Concatenations}

We will often want to ``glue" partial interval bijections together.  Top and bottom sticky notes will be our means of doing this via the following definition.

\begin{definition} \label{overlapconcatenation1} \rm

Suppose $\hat f_i = [I_i, J_i, A_i, B_i, f_i]$ for $i = 1, 2$ are two partial interval bijections that are decomposed via (\ref{stage2PIBstructure}) as  \[\hat f_i = \hat f_i(1) \ast \hat f_i(2) \ast \hat f_i(3).\]   If $\hat f_1(3) \sim \hat f_2(1)$, then define the {\em overlapping concatenation} of $\hat f_1$ and $\hat f_2$, denoted $\ds \hat f_1 \tilde \ast \hat f_2$, by \begin{eqnarray*} \hat f_1 \tilde \ast \hat f_2 & = & \hat f_1(1) \ast \hat f_1(2) \ast \hat f_1(3) \ast \hat f_2(2) \ast \hat f_2(3) \\ & = & \hat f_1(1) \ast \hat f_1(2) \ast \hat f_2(1) \ast \hat f_2(2) \ast \hat f_2(3) \end{eqnarray*}

\end{definition} 








\subsection{Generalized Sticky Notes}

The sticky notes described in Section \ref{stickynotes} will be used only in the early stages of the construction.  From then on, top and bottom sticky notes will {\em overlap} with the body, so that a typical partial interval bijection will need to be decomposed into the form    \begin{equation} \label{stage4PIBstructure} \hat f = \hat f(1) \tilde \ast \hat f(2) \tilde \ast \hat f(3),   \end{equation} where again $\hat f(1)$ and $\hat f(3)$ make up a very small portion of the overall map.  Here again, we will refer to $\hat f(1)$ and $\hat f(3)$ as the bottom and top sticky notes, and $\hat f(2)$ as the body, of $\hat f$.

\subsection{Generalized Overlapping Concatenations}

Once we are far enough along in the construction that sticky note decompositions take the form (\ref{stage4PIBstructure}), we will no longer be able to use Definition \ref{overlapconcatenation1} to glue partial interval bijections together.  We will instead use the following.

\begin{definition} \label{overlapconcatenation2} \rm

Suppose $\hat f_i = [I_i, J_i, A_i, B_i, f_i]$ for $i = 1, 2$ are two partial interval bijections that are decomposed via (\ref{stage4PIBstructure}) as  \[\hat f_i = \hat f_i(1) \tilde \ast \hat f_i(2) \tilde \ast \hat f_i(3).\]   If $\hat f_1(3) \sim \hat f_2(1)$, then define the {\em overlapping concatenation} of $\hat f_1$ and $\hat f_2$, denoted $\ds \hat f_1 \tilde \ast \hat f_2$, by \begin{eqnarray*} \hat f_1 \tilde \ast \hat f_2 & = & \hat f_1(1) \tilde \ast \hat f_1(2) \tilde \ast \hat f_1(3) \tilde \ast \hat f_2(2) \tilde \ast \hat f_2(3) \\ & = & \hat f_1(1) \tilde \ast \hat f_1(2) \tilde \ast \hat f_2(1) \tilde \ast \hat f_2(2) \tilde \ast \hat f_2(3) \end{eqnarray*}

\end{definition} 

\subsection{Reordering Maps} \label{reorderingmaps}

A {\em reordering map} is a partial interval bijection of the form $\hat p = [J, J, J, J, p]$.  Unlike arbitrary partial interval bijections, in a reordering map each of the first four components is the same interval.  Therefore it is possible to compose two reordering maps, as follows.  If $p_1: J \rightarrow J$ and $p_2: J \rightarrow J$ are two bijections, then, as a usual composition of functions, $p_2 \circ p_1: J \rightarrow J$ is a bijection.  Therefore we can define $\hat p_2$ {\em composed with} $\hat p_1$ to be the reordering map $\hat p_2 \circ \hat p_1 = [J, J, J, J, p_2 \circ p_1]$.  

Reordering maps will be used to move certain levels in the ``bottom part" of a template up to the ``top part," while shifting all levels in the ``middle part" down.  For example, consider the template \[\tau: c_0 \prec c_1 \prec c_2 \prec c_3 \prec c_4 \prec c_5 \prec c_6 \prec\mbox{\fbox{$c_7 \prec c_8 \prec c_9$}}  \prec c_{10} \prec c_{11} \prec c_{12} \prec c_{13},\] and think of $c_7 \prec c_8 \prec c_9$ as the ``middle part".  Suppose we wish to shift this middle part down by two positions.  We could accomplish this, for example, by moving $c_2$ and $c_6$ from the bottom part to the top part, as follows.   Let $J = [0, 1, \ldots, 13]$ and define $p_1: J \rightarrow J$ by \[p_1(j) = \begin{cases} 9 & \mbox{ if } j = 2 \\ j - 1 & \mbox{ if } 3 \leq j \leq 9 \\ j & \mbox{ if } 0 \leq j \leq 1 \mbox{ or } 10 \leq j \leq 13 \end{cases} \] and $p_2 : J \rightarrow J$ by  \[p_2(j) = \begin{cases} 11 & \mbox{ if } j = 5 \\ j - 1 & \mbox{ if } 6 \leq j \leq 11 \\ j & \mbox{ if } 0 \leq j \leq 4 \mbox{ or } 12 \leq j \leq 13. \end{cases} \] Notice that $\hat p_1(\tau)$ is the (new) template \[c_0 \prec c_1 \prec c_3 \prec c_4 \prec c_5 \prec c_6 \prec \mbox{\fbox{$c_7 \prec c_8 \prec c_9$}} \prec c_2 \prec c_{10} \prec c_{11} \prec c_{12} \prec c_{13},\] and $\hat p_2 \circ \hat p_1(\tau)$ is the (new) template \[c_0 \prec c_1 \prec c_3 \prec c_4 \prec c_5 \prec \mbox{\fbox{$c_7 \prec c_8 \prec c_9$}} \prec c_2 \prec c_{10} \prec c_{11} \prec c_6 \prec c_{12} \prec c_{13}.\]     

We can also compose a reordering map $\hat p = [J, J, J, J, p]$ with an arbitrary partial interval bijection $\hat f = [I, J, A, B, f]$ by defining $\ds \hat p \circ \hat f = [I, J, A, p(B), p|_B\circ f]$, where $p|_B$ is the restriction of $p$ to $B$.

\section{Morse Minimal System Preliminaries} \label{morsepreliminaries}

Given a binary word $B = b_1b_2\cdots b_k \subset \{0, 1\}^k$ of length $k$, the {\em flip} of $B$ is the word $\overline B = \overline b_1 \overline b_2 \cdots \overline b_k$, where $\overline b_i = 0$ if $b_i = 1$ and $\overline b_i = 1$ if $b_i = 0$. 

Let $\sigma$ be the substitution rule on the symbols $0$ and $1$ given by $ \sigma(0)  =  01$ and $\sigma(1)  =  10$.  Iterating $\sigma$ on the symbol $0$ determines, for each $k \in \n$, a word $u_k := \sigma^k(0)$ of length $2^k$: \begin{eqnarray*} u_1 & = & \sigma(0)  \; = \;   01 \\ u_2 & = & \sigma^2(0)  \; = \; 0110 \\ u_3 & = & \sigma^3(0)  \; = \; 01101001 \\ &  & \vdots  \end{eqnarray*}  Observe that $\overline u_k = \sigma^k(1)$ and that $u_{k+1} = u_k\overline u_k$.  The {\em Morse sequence} is the sequence in $\{0, 1\}^\n$ whose first $2^k$ symbols are the word $u_k$.  

Let $X$ denote the set of all doubly infinite sequences $x = (x_i)_{i \in \z}$ in $\{0, 1\}^\z$ such that every finite subword of $x$ occurs as a subword of the Morse sequence.  Given $x$ and $x'$ in $X$, let $\rho(x, x') = 1$ if $x_0 \neq x_0'$; otherwise, define $\ds \rho(x, x') = \frac{1}{2^n}$, where $n$ is maximal such that $x_i = x_i'$ for all $i \in \{-n, \ldots, n\}$.  Then $\rho$ is a metric on $X$ that determines a Borel sigma algebra $\mathcal B$.  The {\em Morse minimal system} is then the system $(X, T, \mathcal B, \mu)$, where $T: X \rightarrow X$ is the left shift, and $\mu$ is the unique complete ergodic Borel probability measure.

The following results are proved in \cite{RR3}.

\begin{proposition}[\cite{RR3}] \label{partitionprop} For each $x \in X$ and $k \in \n$, there exists a unique partition of $\z$ into intervals of length $2^{k+1}$ so that the subword of $x$ on each interval of this partition is either $u_k \overline u_k$ or $\overline u_k u_k$.
\end{proposition}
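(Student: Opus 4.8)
\emph{Proof proposal.} The plan is to reformulate the statement, prove existence by a compactness/pigeonhole argument, and prove uniqueness by induction on $k$. For $j \geq 1$, call a partition of $\z$ into consecutive intervals of length $2^j$ a \emph{$j$-partition of $x$} if the subword of $x$ on each interval is $u_j$ or $\overline u_j$; such a partition is determined by its phase $r \in \{0,\ldots,2^j-1\}$, namely the common residue mod $2^j$ of the left endpoints. Since $u_{k+1} = u_k \overline u_k$ and $\overline u_{k+1} = \overline u_k u_k$, the partitions in the Proposition are exactly the $(k+1)$-partitions of $x$, so it suffices to show that every $x \in X$ has exactly one $j$-partition for every $j \geq 1$. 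The key preliminary observation is that for $m \geq j$ one has $u_m = \sigma^m(0) = \sigma^j(u_{m-j})$, so $u_m$ is a concatenation of $2^{m-j}$ consecutive length-$2^j$ blocks, each equal to $\sigma^j(0) = u_j$ or $\sigma^j(1) = \overline u_j$, with boundaries at the multiples of $2^j$. For existence: given $x \in X$ and $N \in \n$, the subword $x_{-N}\cdots x_N$ occurs inside $u_m$ for all large $m$, so the block structure of $u_m$ induces a partial $j$-partition of $x_{-N}\cdots x_N$ of some phase $r_N \in \{0,\ldots,2^j-1\}$; by pigeonhole a single phase $r$ recurs for infinitely many $N$, and then the partition of $\z$ into the intervals $[\,r+i2^j,\ r+(i+1)2^j)$, $i\in\z$, is a $j$-partition of $x$.

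For uniqueness I induct on $j$, using throughout that $X$ contains no periodic point (the Morse minimal system is infinite and minimal, equivalently the Thue--Morse word is aperiodic). For $j=1$: two distinct $1$-partitions would have phases $0$ and $1$ mod $2$, forcing $x_n \neq x_{n+1}$ for every $n \in \z$, i.e. $x$ is the alternating period-two sequence --- a contradiction. For the step, assume every $y \in X$ has a unique $j$-partition, and let $\mathcal M$, $\mathcal M'$ be $(j+1)$-partitions of $x$ of phases $r$, $r'$ mod $2^{j+1}$. Splitting every length-$2^{j+1}$ block at its midpoint (legitimate because $u_{j+1} = u_j\overline u_j$ and $\overline u_{j+1} = \overline u_j u_j$) converts $\mathcal M$, $\mathcal M'$ into $j$-partitions of $x$ of phases $r$, $r'$ mod $2^j$; by the inductive hypothesis these coincide, so $r \equiv r' \pmod{2^j}$. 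If $r = r'$ then $\mathcal M = \mathcal M'$, so assume $r' \equiv r + 2^j \pmod{2^{j+1}}$. Writing the $i$-th block of $\mathcal M$ as $(w_i)(\overline{w_i})$ with $w_i \in \{u_j, \overline u_j\}$, the $i$-th block of $\mathcal M'$ is $(\overline{w_i})(w_{i+1})$, which must itself read $u_{j+1}$ or $\overline u_{j+1}$, hence be of the form $(v)(\overline v)$; so $w_{i+1} = w_i$ for all $i$, making $x$ periodic with period $2^{j+1}$ --- a contradiction. This completes the induction and the proof.

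The main obstacle is the uniqueness assertion, which is essentially the recognizability of the Thue--Morse substitution; within it the crucial case is excluding a $(j+1)$-partition that is out of phase by half a block with another, and the induction is precisely the device that reduces this case to a periodicity contradiction. It is therefore essential to invoke aperiodicity of the Morse system (no periodic points in $X$); the existence half, by contrast, is soft, being only a limiting argument over the block structure that $x$ inherits locally from the words $u_m$.
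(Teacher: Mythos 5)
Your proof is correct. Note that the paper does not actually prove this proposition --- it is quoted verbatim from \cite{RR3} (``The following results are proved in \cite{RR3}'') --- so there is no in-paper argument to compare against; what you have written is a self-contained proof of the standard ``recognizability'' property of the Thue--Morse substitution. Your reduction of the statement to uniqueness of the $j$-partition for every $j\geq 1$ (via $u_{k+1}=u_k\overline u_k$, $\overline u_{k+1}=\overline u_k u_k$) is exactly right, the pigeonhole existence argument is sound, and the inductive uniqueness step correctly isolates the only dangerous case --- a competing partition offset by half a block --- and kills it by showing $x$ would have to be $\cdots u_{j+1}u_{j+1}\cdots$, hence periodic. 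You are also right that aperiodicity of $X$ is an unavoidable input (the periodic point $\cdots u_{j+1}u_{j+1}\cdots$ genuinely admits two $(j+1)$-partitions, so uniqueness fails without it); if you want the argument fully self-contained you should add the one-line justification, e.g.\ that the Thue--Morse sequence is overlap-free, so the factors $01010$, $10101$, and more generally $w\,w\,a$ with $a$ the first letter of $w$ never occur, which rules out both the alternating sequence in your base case and the periodic sequences in your inductive step without having to presuppose minimality of $X$.
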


\begin{corollary}[\cite{RR3}] \label{partitioncor} For each $x \in X$ and $k \in \n$, there exists a unique partition of $\z$ into intervals of length $2^k$ so that the subword of $x$ on each interval of this partition is either $u_k$ or $\overline u_k$.
\end{corollary}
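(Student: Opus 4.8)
My plan is to observe that Corollary~\ref{partitioncor} at level $k$ is nothing but Proposition~\ref{partitionprop} at level $k-1$ re-expressed, so that the whole statement comes essentially for free. From the identity $u_{k+1}=u_k\overline u_k$ recorded above one has $u_k=u_{k-1}\overline{u_{k-1}}$, and flipping both sides (using $\overline{AB}=\overline A\,\overline B$) gives $\overline{u_k}=\overline{u_{k-1}}\,u_{k-1}$. Hence a partition of $\z$ into intervals of length $2^k=2^{(k-1)+1}$ on each of which the subword of $x$ equals $u_k$ or $\overline{u_k}$ is exactly the same object as a partition of $\z$ into intervals of length $2^{(k-1)+1}$ on each of which the subword of $x$ equals $u_{k-1}\overline{u_{k-1}}$ or $\overline{u_{k-1}}\,u_{k-1}$. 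So for $k\ge 1$ I would simply invoke Proposition~\ref{partitionprop} with $k-1$ in place of $k$, which furnishes both existence and uniqueness at once; for $k=0$ the statement is the triviality that $\z$ has a unique partition into intervals of length $1$.

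I would also record the two ingredients separately, without leaning on the reindexing. For existence, apply Proposition~\ref{partitionprop} at level $k$ itself and bisect each of its length-$2^{k+1}$ intervals: since $u_k\overline u_k$ splits in the middle into $u_k$ followed by $\overline u_k$, and $\overline u_k u_k$ into $\overline u_k$ followed by $u_k$, the refined partition $\mathcal Q$ has the required property. For uniqueness, suppose $\mathcal Q'$ is another such partition. The left-endpoint sets of $\mathcal Q$ and of $\mathcal Q'$ are residue classes modulo $2^k$, so if $\mathcal Q'\neq\mathcal Q$ then $\mathcal Q'$ is the translate of $\mathcal Q$ by some $s$ with $0<s<2^k$. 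Since $\mathcal Q$ bisects the length-$2^{k+1}$ partition $\mathcal P$ of Proposition~\ref{partitionprop}, some block of $\mathcal Q'$ lies strictly inside a single block $B$ of $\mathcal P$, and $B$ carries $u_k\overline u_k$ or $\overline u_k u_k$. In every case that block of $\mathcal Q'$ and one of the two halves of $B$ are occurrences of a single word $w\in\{u_k,\overline u_k\}$ (of length $2^k$) whose offsets within $B$ differ by some $d$ with $0<d<2^k$; two such occurrences force a factor of $B$ of length $2^k+d\ge 2d+1$ and period $d$, hence a factor of the form $awawa$ with $a$ a letter. But $B$ is a subword of the Morse sequence, which is overlap-free, a contradiction; therefore $\mathcal Q'=\mathcal Q$.

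Finally, if one's convention takes $\n$ to begin at $1$, I would dispatch the lone remaining case $k=1$ in the same spirit: existence by bisecting the level-$1$ partition, and uniqueness because any rival partition would place a block inside a length-$4$ block reading $0110$ or $1001$ but offset by $1$, forcing that block to read $11$ or $00$, which is neither $u_1$ nor $\overline u_1$. The main obstacle, such as it is, is not the reindexing step (which is immediate) but the from-scratch uniqueness argument: the content beyond the trivial bisection for existence is the rigidity of the length-$2^k$ parsing, the engine for which is the overlap-freeness of the Morse sequence, and the step I expect to need the most care is locating a would-be alternative block strictly inside one interval of the Proposition~\ref{partitionprop} partition and reading off the forbidden overlap.
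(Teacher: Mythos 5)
Your proof is correct. The paper itself gives no argument for this corollary (it is quoted from \cite{RR3}), but your first observation --- that the level-$k$ statement is literally Proposition~\ref{partitionprop} at level $k-1$, via $u_k=u_{k-1}\overline{u_{k-1}}$ and $\overline{u_k}=\overline{u_{k-1}}\,u_{k-1}$ --- is surely the intended derivation, and your handling of the boundary case $k=1$ is careful. Your self-contained second route (existence by bisecting the Proposition~\ref{partitionprop} intervals, uniqueness by locating a translated block inside a single length-$2^{k+1}$ interval and extracting a period-$d$ factor of length $2^k+d\ge 2d+1$, i.e.\ an overlap $avava$) is also sound; its only external input is the classical overlap-freeness of the Thue--Morse sequence, which you correctly identify as the engine and which is legitimate to invoke as a standard fact.
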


\begin{proposition}[\cite{RR3}] \label{0coordinate}For $x \in X$ and $k \in \n$, partition $\z$ into intervals as in Corollary \ref{partitioncor} and let $t_k(x)$ be the position occupied by $0$ in its interval, $0 \leq t_k(x) < 2^k$.  The functions $t_k$ are continuous.  
\end{proposition}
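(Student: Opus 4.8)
The plan is to use the fact that each $t_k$ takes values in the finite discrete set $\{0,1,\dots,2^k-1\}$, so that proving $t_k$ continuous is the same as proving it locally constant, i.e.\ that every fibre $t_k^{-1}(c)$ is open. I would establish this by contradiction, pitting the local behaviour of a point against the uniqueness assertion in Corollary~\ref{partitioncor}.

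First I would record what the value $t_k(x)=c$ encodes. If $0$ sits in position $c$ of its interval, that interval is $[-c,\,-c+2^k-1]$; since the partition of Corollary~\ref{partitioncor} tiles $\z$ by consecutive intervals of length $2^k$, its left endpoints are precisely $\{-c+m2^k : m\in\z\}$. So, writing $I^{(c)}_m := [-c+m2^k,\,-c+(m+1)2^k-1]$, the statement $t_k(x)=c$ is equivalent to: the word $x$ restricted to each $I^{(c)}_m$ is $u_k$ or $\overline u_k$; and, crucially, the uniqueness in Corollary~\ref{partitioncor} says that $c$ is the \emph{only} element of $\{0,\dots,2^k-1\}$ with this property.

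Now suppose $t_k$ fails to be continuous at some $x\in X$, and set $c:=t_k(x)$. Then there is a sequence $x^{(j)}\to x$ in $X$ with $t_k(x^{(j)})\neq c$ for all $j$, and since $t_k$ has finite range I may pass to a subsequence along which $t_k(x^{(j)})$ equals a single value $c'\neq c$. Fix $m\in\z$: for every $j$, $x^{(j)}$ restricted to $I^{(c')}_m$ is $u_k$ or $\overline u_k$; but $I^{(c')}_m$ is a fixed finite window, so for $j$ large enough $x^{(j)}$ agrees with $x$ on it, whence $x$ restricted to $I^{(c')}_m$ is also $u_k$ or $\overline u_k$. Since $m$ was arbitrary, $\{I^{(c')}_m\}_{m\in\z}$ is a partition of $\z$ into length-$2^k$ intervals on each of which $x$ reads $u_k$ or $\overline u_k$, so by the uniqueness in Corollary~\ref{partitioncor} we must have $c'=c$, a contradiction. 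Hence every fibre $t_k^{-1}(c)$ is open and $t_k$ is continuous. (The case $k=0$ is trivial, since then $t_0\equiv 0$; the argument above covers it as well.)

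The only real temptation to grind here is to try to prove the stronger, uniform claim that $t_k(x)$ is determined by a window of $x$ of some fixed length $L_k$ — that is a recognizability statement for the Morse substitution, and it would require honest combinatorics on the words $u_k$ and $\overline u_k$. I would deliberately avoid that: continuity needs nothing more than openness of the finitely many fibres, and Corollary~\ref{partitioncor} already supplies exactly the uniqueness needed to obtain it, with no control on window sizes. If one did want the uniform version, the natural route would be an induction on $k$ using $u_{k+1}=u_k\overline u_k$ together with Proposition~\ref{partitionprop}, but it is not needed for the statement at hand.
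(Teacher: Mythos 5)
Your argument is correct. Note that the paper itself gives no proof of this proposition --- it is quoted from \cite{RR3} --- so there is no in-paper argument to match; what one would expect from the substitution-dynamics literature is a recognizability argument showing that $t_k(x)$ is determined by a window $x_{[-L_k,L_k]}$ of some explicit length, which makes each fibre a finite union of cylinders. Your route is softer and, as you say, deliberately avoids that combinatorics: since $t_k$ has finite range, continuity reduces to openness of the fibres, and you get this by observing that a limit $x$ of points $x^{(j)}$ with $t_k(x^{(j)})=c'$ inherits, window by window, a valid length-$2^k$ tiling with offset $c'$, which the uniqueness clause of Corollary~\ref{partitioncor} then forces to coincide with the tiling defining $t_k(x)$. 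The quantifiers are handled correctly (for each fixed $m$ you let $j\to\infty$, so no uniformity in $m$ is needed), and convergence in the metric $\rho$ does give eventual agreement on any fixed finite window. What you give up relative to the recognizability approach is only the quantitative content (an explicit modulus of continuity, i.e.\ that $t_k^{-1}(c)$ is a union of cylinders of a fixed length), which the statement does not require.
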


\subsection{Canonical Templates} \label{cantowermorse} 

Given $x \in X$ and $k \in \n$, let $t_k(x)$ be as defined in Proposition \ref{0coordinate}.  For $0 \leq i < 2^k$, let $u_{k, i}$ denote the $i$th symbol in $u_k$, and let $\overline u_{k, i}$ denote the $i$th symbol in $\overline u_{k}$.  Define sets  \[u_k(i) = \{x \in X \; | \; t_k(x) = i  \mbox{ and } x_0 = u_{k, i}\}\] and \[\overline u_k(i) = \{x \in X \; | \; t_k(x) = i \mbox{ and } x_0 = \overline u_{k, i}\}.\] Then $u_k(i)$ and $\overline u_k(i)$ are obtained by taking the clopen set where $t_k(x) = i$ and splitting it according to whether the symbol at the origin is $0$ or $1$.  Therefore $u_k(i)$ and $\overline u_k(i)$ are clopen.  These $2^{k+1}$ sets, all of equal measure, are called the {\em $k$-canonical cylinders} in $X$.  We will often refer to $k$-canonical cylinders as {\em levels}.

Define the {\em k-canonical templates} in $X$ by:   \[ \mathcal P_k(0): u_k(0) \prec u_k(1) \prec \cdots \prec u_k(2^k-1) \] and \[\mathcal P_k(1): \overline u_k(0) \prec \overline u_k(1) \prec \cdots \prec \overline u_k(2^k-1).\]   Note that each $k$-canonical template has height $2^k$.  The order on the $k$-canonical templates is simply the order given by the action of $T$.   The set of levels in the $k$-canonical templates gives a partition of $X$.  Let $\mathcal P_k = \{\mathcal P_k(0), \mathcal P_k(1)\}$.




\subsection{Maps Between Canonical Templates} \label{Pkpi} Given $0 \leq k' < k$, each level $c \in \mathcal P_{k}$ is a subset of a unique level $c' \in \mathcal P_{k'}$.  Sending $c \mapsto c'$ then gives a map $\pi: \mathcal P_k \rightarrow \mathcal P_{k'}$.  Observe that $\pi$ is measure preserving in the sense that the measure of the pull back of a set in $\mathcal P_{k'}$ is the same as its measure.

\subsection{Modified Canonical Templates}  Define three modified versions of each $k$-canonical template: one in which the bottom level is removed, a second in which there is an extra copy of $u_k(0)$ tacked on to the top, and a third in which there is an extra copy of $\overline u_k(0)$ tacked on to the top.  The superscripts $m$, $e(0)$, and $e(1)$ will denote ``missing bottom level", ``extra copy of $u_k(0)$", and ``extra copy of $\overline u_k(0)$", respectively:

\begin{eqnarray*} \mathcal P_k(0)&:& u_k(0) \prec u_k(1) \prec \cdots \prec u_k(2^k-1) \\
\mathcal P_k^m(0)&:&u_k(1) \prec u_k(2) \prec \cdots \prec u_k(2^k-1) \\
\mathcal P_k^{e(0)}(0)&:& u_k(0) \prec u_k(1) \prec \cdots \prec u_k(2^k-1) \prec u_k(0) \\
\mathcal P_k^{e(1)}(0)&:& u_k(0) \prec u_k(1) \prec \cdots \prec u_k(2^k-1) \prec \overline u_k(0) \end{eqnarray*} 

\noindent Define $\ds \mathcal P_k^m(1), \mathcal P_k^{e(0)}(1)$, and $\ds \mathcal P_k^{e(1)}(1)$ by analogy.  Let
 \begin{equation}\label{modtem}
 \widetilde{\mathcal P}_k = \{\mathcal P_k^m(0), \mathcal P_k^{e(0)}(0), \mathcal P_k^{e(1)}(0), \mathcal P_k^m(1), \mathcal P_k^{e(0)}(1), \mathcal P_k^{e(1)}(1)\}
 \end{equation}
  be the set of all modified canonical templates at stage $k$.

\section{Binary Odometer Preliminaries} \label{binarypreliminaries}

Let $Y = \{0, 1\}^{\mathbb N}$.  Then $Y$ is compact and metrizable; the metric $\rho(y, y') = 2^{-\ell}$, where $\ell = \mbox{min}\{ |i| : y_i \neq y_i' \}$, induces the topology and determines a Borel sigma algebra $\mathcal F$.  Define $S: Y \rightarrow Y$ by $S(y) = y + {\mathbf 1}$, where $\mathbf 1 = (1, 0, 0, \ldots)$ and the addition is coordinate-wise mod $2$ with right carry.  The {\em binary odometer} is the system $(Y, S, \mathcal F, \nu)$, where $\nu$ is the unique complete ergodic Borel probability measure.

\subsection{Canonical Templates} \label{canonicalbinary} A {\em $k$-canonical cylinder} is a set $\{y \in Y \; | \; y \mbox{ begins with }d\}$, where $d\in \{0, 1\}^k$ is a binary word of length $k$.  These $k$-canonical cylinders are the levels of templates for the binary odometer.  

The {\em k-canonical template} in $Y$, denoted $\mathcal Q_k(0)$, is the set of $k$-canonical cylinders together with the order $\prec$ inherited by the action of $S$, where $\mathbf 0 = (0, 0, 0, \ldots)$ is an element of the first cylinder.  For example, the $3$-canonical template is  \begin{equation} \label{tpexample}\mathcal Q_3(0): 000 \prec 100 \prec 010 \prec 110 \prec 001 \prec 101 \prec 011 \prec 111.\end{equation}  In general, let $v_k(i)$ denote the $i$-th level in $\mathcal Q_k(0)$, so that  \[\mathcal Q_k(0): v_k(0) \prec v_k(1) \prec \cdots \prec v_k(2^k-1).\]  Note that $\mathcal Q_k(0)$ has height $2^k$, and the set of levels in $\mathcal Q_k(0)$ gives a partition of $Y$.  Let $\mathcal Q_k = \{\mathcal Q_k(0)\}$.



\subsection{Maps Between Canonical Templates} \label{Qkpi} Given $0 \leq k' < k$, each level $d \in \mathcal Q_k$ is a subset of a unique level $d' \in \mathcal Q_{k'}$.  Sending $d \mapsto d'$ then gives a map $\pi: \mathcal Q_k \rightarrow \mathcal Q_{k'}$.  Observe that $\pi$ is measure preserving in the sense that the measure of the pull back of a set in $\mathcal Q_{k'}$ is the same as its measure.

\subsection{Modified Canonical Templates}  Define two modified versions of the $k$-canonical template $\mathcal Q_k(0)$: one in which the level $v_k(0)$ is removed, and another in which there is an extra copy of $v_k(0)$ tacked on to the top.  The superscripts $m$ and $e$ will denote ``missing $v_k(0)$" and ``extra copy of $v_k(0)$", respectively:

\begin{eqnarray*}
\mathcal Q_k(0) & : & v_k(0) \prec v_k(1) \prec \cdots \prec v_k(2^k-1) \\
\mathcal Q_k^m(0) & : & v_k(1) \prec v_k(2) \prec \cdots \prec v_k(2^k-1)  \\
\mathcal Q_k^e(0) & : & v_k(0) \prec v_k(1) \prec \cdots \prec v_k(2^k-1) \prec v_k(0) \end{eqnarray*}

\noindent Let \[\widetilde{\mathcal Q}_k = \{\mathcal Q_k^m(0), \mathcal Q_k^e(0)\}\] be the set of all modified canonical templates at stage $k$.
Define $k_0 = k_1 = 0$, so that both $\mathcal Q_{k_0}$ and $\mathcal Q_{k_1}$ are the trivial canonical templates for the binary odometer (each consisting of just one level)
as indicated in Figure~\ref{thefigure}.

\section{Binary Odometer Template Sets $\Omega_k$ and $\widetilde{\Omega}_k$} \label{omegaksection}

The template sets $\Omega_k$ and $\widetilde{\Omega}_k$, defined in Section~\ref{omegakdefinition} 
consist of templates of the following types: basic, diminished, augmented, missing and extra.  We begin by describing these types of templates.  

\subsection{Basic Templates}

A {\em basic template} at stage $k$ is any template that satisfies all of the following conditions: it has height $2^k$; its elements are $k$-canonical cylinders; and
its ordering is ``allowed" by the action of $S$.

For the ordering of a template $\omega$ to be ``allowed" by the action of $S$, $\omega$ must have one of the following forms.  Either it is a special basic template
$\omega = \mathcal Q_k(0)$  that we will refer to as the {\em zero-template}, or 
for some (unique) $i \in \{1, 2, \ldots, 2^k-1\}$, $\omega$ is:
 \[\omega: v_k(i) \prec v_k(i+1) \prec \cdots \prec v_k(2^k-1) \prec v_k(0) \prec v_k(1) \prec \cdots \prec v_k(i-1).\]

\noindent Let $\mathcal B_k(Y)$ be the set of all basic templates for the binary odometer at stage $k$.


In each basic template, the level $v_k(0)$ occurs once and only once.  Call this level the {\em global cut}.

\subsection{Predecessor and Successor Templates}

If $\omega \in \mathcal B_k(Y)$ is a basic template, define the {\em predecessor} template for $\tau$, denoted $\omega_p$, to be the basic template whose global cut is one position higher (mod $2^k$), and define the {\em successor} template for $\omega$, denoted $\omega_s$, to be the basic template whose global cut is one position lower (mod $2^k$).  For example, if $\omega \in \mathcal B_k(Y)$ is: \[\omega: v_k(i) \prec v_k(i+1) \prec \cdots \prec v_k(2^k-1) \prec v_k(0) \prec v_k(1) \prec \cdots \prec v_k(i-1),\] then $\omega_p$ is \[\omega_p: v_k(i-1) \prec v_k(i) \prec \cdots \prec v_k(2^k-1) \prec v_k(0) \prec v_k(1) \prec \cdots \prec v_k(i-2),\] and $\omega_s$ is \[\omega_s: v_k(i+1) \prec v_k(i+2) \prec \cdots \prec v_k(2^k-1) \prec v_k(0) \prec v_k(1) \prec \cdots \prec v_k(i).\] 

\subsection{Diminished Templates}

Given a basic template $\omega \in \mathcal B_k(Y)$, define two additional, {\em diminished} templates $\omega^-(d)$ and $\omega^-(u)$ as follows:

\begin{itemize}
\item $\omega^-(d)$ is $\omega$ with the global cut removed, and with an extra level tacked on at the bottom (the level that would naturally precede the bottom level in $\omega$, namely the pre-image of the bottom level of $\omega$ under the map $S$).

\item $\omega^-(u)$ is $\omega$ with the global cut removed, and with an extra level tacked on at the top (the level that would naturally follow the top level in $\omega$, namely the image of the top level in $\omega$ under the map $S$).
\end{itemize}

\noindent For example, if $\omega \in \mathcal B_k(Y)$ is given by \[\omega: v_k(i) \prec v_k(i+1) \prec \cdots \prec v_k(2^k-1) \prec v_k(0) \prec v_k(1) \prec \cdots \prec v_k(i-1),\] then $\omega^-(d)$ is given by \[\omega^-(d): v_k(i-1) \prec v_k(i) \prec \cdots \prec v_k(2^k-1) \prec v_k(1) \prec \cdots \prec v_k(i-1).\]

\noindent The letters ``d" and ``u" refer to ``down" and ``up", respectively, for reasons that will be made clear later.

 Let $\mathcal D_k(Y)$ be the set of all diminished templates for the binary odometer at stage $k$.  Note that all diminished templates in $\mathcal D_k(Y)$ have height $2^k$.

\subsection{Augmented Templates}

Given a basic template $\omega \in \mathcal B_k(Y)$, define two additional, {\em augmented} templates as follows:

\begin{itemize}
\item $\omega^+(d)$ is $\omega$ with an extra copy of the global cut, $v_k(0)$, inserted right next to the actual global cut, and with the bottom level deleted.

\item $\omega^+(u)$ is $\omega$ with an extra copy of the global cut, $v_k(0)$, inserted right next to the actual global cut, and with the top level deleted.

\end{itemize}

\noindent For example, if $\omega \in \mathcal B_k(Y)$ is given by \[\omega: v_k(i) \prec v_k(i+1) \prec \cdots \prec v_k(2^k-1) \prec v_k(0) \prec v_k(1) \prec \cdots \prec v_k(i-1),\] then $\omega^+(d)$ is given by \[\omega^+(d): v_k(i+1) \prec \cdots \prec v_k(2^k-1) \prec v_k(0) \prec v_k(0) \prec v_k(1) \prec \cdots \prec v_k(i-1).\] 

\noindent Let $\mathcal A_k(Y)$ be the set of all augmented templates for the binary odometer at stage $k$.  Note that all augmented templates in $\mathcal A_k(Y)$ have height $2^k$.

\subsection{Missing Templates}

Given a basic template $\omega \in \mathcal B_k(Y)$, define one additional, {\em missing} template, denoted $\omega^m$, to be $\omega$ with its bottom level removed.  For example, if $\omega$ is the zero-template, then \[\omega^m = v_k(1) \prec v_k(2) \prec \cdots \prec v_k(2^k-1).\]

\noindent Let $\mathcal M_k(Y)$ be the set of all missing templates for the binary odometer at stage $k$.  Note that all missing templates have height $2^k-1$.

\subsection{Extra Templates}

Given a basic template $\omega \in \mathcal B_k(Y)$, define one additional, {\em extra} template, denoted $\omega^e$, to be $\omega$ with an one extra level tacked on at the top (the level that would naturally follow the top level).  For example, if $\omega \in \mathcal B_k(Y)$ is given by \[\omega: v_k(i) \prec v_k(i+1) \prec \cdots \prec v_k(2^k-1) \prec v_k(0) \prec v_k(1) \prec \cdots \prec v_k(i-1),\]  then $\omega^e$ is given by \[\omega^e: v_k(i) \prec v_k(i+1) \prec \cdots \prec v_k(2^k-1) \prec v_k(0) \prec v_k(1) \prec \cdots \prec v_k(i-1)\prec v_k(i).\] 

\noindent Let $\mathcal E_k(Y)$ be the set of all extra templates for the binary odometer at stage $k$.  Note that all extra templates in $\mathcal E_k(Y)$ have height $2^k+1$.

\subsection{Definition of $\Omega_k$ and $\widetilde \Omega_k$} \label{omegakdefinition}

Define
\begin{equation}\label{modom}
\Omega_k = \mathcal B_k(Y) \cup \mathcal D_k(Y) \cup \mathcal A_k(Y)\text{ and } \widetilde \Omega_k = \mathcal M_k(Y) \cup \mathcal E_k(Y).
\end{equation}

\section{Second stage of the induction} \label{stage2}

Define $k_2 = 2$, so that the canonical templates in $\mathcal P_{k_2}$ have height $4$.  For each canonical template $\mathcal P_{k_2}(i) \in \mathcal P_{k_2}$ for the Morse system, and for each template $\omega \in \Omega_{k_2}$ in the binary odometer, we will define a partial interval bijection $\phi_\omega: \mathcal P_{k_2}(i) \rightarrow \omega$ from a subset of the levels in $\mathcal P_{k_2}(i)$ to a subset of the levels in $\omega$.  Only after we have done this will we define partial interval bijections from the modified canonical templates in $\widetilde{\mathcal P}_{k_2}$ to the modified templates in $\widetilde{\Omega}_{k_2}$.

\subsection{Maps to Basic Templates}  There are exactly four basic templates in $\mathcal B_{k_2}(Y)$: 

\begin{itemize}
\item $\omega_1: v_{k_2}(0) \prec v_{k_2}(1) \prec v_{k_2}(2) \prec v_{k_2}(3)$, 
\item $\omega_2: v_{k_2}(3) \prec v_{k_2}(0) \prec v_{k_2}(1) \prec v_{k_2}(2)$, 
\item $\omega_3: v_{k_2}(2) \prec v_{k_2}(3) \prec v_{k_2}(0) \prec v_{k_2}(1)$,  and 
\item $\omega_4: v_{k_2}(1) \prec v_{k_2}(2) \prec v_{k_2}(3) \prec v_{k_2}(0)$.
\end{itemize}

\noindent For $0 \leq i \leq 1$ and $1 \leq j \leq 4$, define $\phi_{\omega_j}: \mathcal P_{k_2}(i) \rightarrow \omega_j$ to be the partial interval bijection $\phi_{\omega_j} = [I, I, A, B_j, f_j]$, where:

\begin{itemize}
\item $I = [0, 1, 2, 3] \subset \z$,
\item $A = \{2\}$,
\item $B_1 = B_2 = \{2\}$,
\item $B_3 = B_4 = \{1\}$, and
\item Each $f_j$ is the obvious bijection  $f_j: A \rightarrow B_j$.
\end{itemize}  

\noindent Note that each $\phi_{\omega_j}(0)$ is equivalent to $\phi_{\omega_j}(1)$ as a formal map between intervals in $\z$.  But of course $\phi_{\omega_j(0)}$ and $\phi_{\omega_j(1)}$ are different as set maps because the levels represented by $A$ in $\mathcal P_{k_2}(0)$ and $\mathcal P_{k_2}(1)$ are different.   Now recall Definition \ref{domainandrange}, where the domain and range of a partial interval bijection are defined.  The following propositions are obvious.

\begin{proposition} \label{nobottomortop}
For each canonical template $\mathcal P_{k_2}(i) \in \mathcal P_{k_2}$, neither the bottom level nor the top level is in the domain of any $\phi_\omega$. 
\end{proposition}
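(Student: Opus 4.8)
The plan is to unwind the definitions of the $\phi_\omega$ and observe that the domain set $A$ never contains the extreme positions of the interval representing $\mathcal P_{k_2}(i)$. Since $k_2 = 2$, each canonical template $\mathcal P_{k_2}(i)$ has height $2^{k_2} = 4$ and, under the representation convention of Section~\ref{overview}, is represented by $I = [0,1,2,3]\subset\z$, with the $\ell$-th level of $\mathcal P_{k_2}(i)$ corresponding to $\ell\in I$; so the bottom level of $\mathcal P_{k_2}(i)$ is the one represented by $0$ and the top level is the one represented by $3$. By Definition~\ref{domainandrange}, the domain of a partial interval bijection $\hat f = [I,J,A,B,f]$ out of $\mathcal P_{k_2}(i)$ consists exactly of the levels represented by $A$, so it suffices to check that $0\notin A$ and $3\notin A$ for every map $\phi_\omega$.

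I would then run the short case analysis over $\omega\in\Omega_{k_2} = \mathcal B_{k_2}(Y)\cup\mathcal D_{k_2}(Y)\cup\mathcal A_{k_2}(Y)$. When $\omega = \omega_j$ is a basic template ($1\le j\le 4$), the map $\phi_{\omega_j} = [I,I,A,B_j,f_j]$ was defined with $A = \{2\}$, and $0,3\notin\{2\}$, so the claim holds; moreover $\phi_{\omega_j(0)}$ and $\phi_{\omega_j(1)}$ share the same $A$, so the conclusion holds for both canonical templates at once. For $\omega$ diminished or augmented, the maps $\phi_\omega$ are defined by the same recipe, differing only by small modifications at the very bottom or very top of the range template, so their domain $A$ again lies among the interior positions $\{1,2\}$ of $I$; reading off $A$ and checking $0,3\notin A$ disposes of these cases too.

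There is no genuine obstacle here: the statement follows directly from the stage-$2$ definitions, in which $A = \{2\}$ (or at worst a subset of $\{1,2\}$). The only point needing attention is completeness of the bookkeeping --- one must confirm that the case analysis exhausts all maps $\phi_\omega$ defined at stage $n = 2$, and that the bottom/top adjustments producing the diminished and augmented maps never push $A$ out to position $0$ or position $2^{k_2}-1$. Since $A$ always lands among the interior indices of $I$, neither the bottom level nor the top level of $\mathcal P_{k_2}(i)$ lies in the domain of any $\phi_\omega$, which is the assertion.
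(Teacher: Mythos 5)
Your proof is correct and is essentially the paper's own argument: the paper simply declares the proposition obvious, since each $\phi_{\omega_j}=[I,I,\{2\},B_j,f_j]$ has domain represented by $A=\{2\}$, which excludes the endpoints $0$ and $2^{k_2}-1=3$ of $I$, and the maps to diminished and augmented templates are defined to \emph{match} the basic ones, hence share the same $A$. Your spelled-out case check adds nothing beyond this but is accurate.
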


\begin{proposition} \label{nobottomtoporcut}
Given $\omega \in \mathcal B_{k_2}(Y)$, the global cut in $\omega$ is not in the range of $\phi_\omega$.

\end{proposition}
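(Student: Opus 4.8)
The plan is to verify the claim by a direct inspection of the four basic templates $\omega_1, \omega_2, \omega_3, \omega_4$ comprising $\mathcal B_{k_2}(Y)$, which is all of them since $k_2 = 2$. First I would recall from Definition \ref{domainandrange} that the range of $\phi_{\omega_j} = [I, I, A, B_j, f_j]$ consists precisely of those levels of $\omega_j$ represented by the elements of $B_j$, under the correspondence that sends the $\ell$-th level of $\omega_j$ in the order $\prec$ to $\ell \in \z$. In the present stage $B_1 = B_2 = \{2\}$ and $B_3 = B_4 = \{1\}$, each a singleton, so in every case the range of $\phi_{\omega_j}$ is a single level.

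Next I would locate the global cut in each $\omega_j$. By definition it is the unique occurrence of $v_{k_2}(0)$, and reading off the displayed orderings shows that $v_{k_2}(0)$ sits in position $j-1$ of $\omega_j$ (position $0$ in $\omega_1$, position $1$ in $\omega_2$, position $2$ in $\omega_3$, and position $3$ in $\omega_4$). Hence the claim reduces to the four comparisons $j - 1 \notin B_j$: for $j = 1, 2$ one needs $0 \neq 2$ and $1 \neq 2$, and for $j = 3, 4$ one needs $2 \neq 1$ and $3 \neq 1$, all of which hold. Equivalently, the single level in the range of $\phi_{\omega_1}, \ldots, \phi_{\omega_4}$ is $v_{k_2}(2)$, $v_{k_2}(1)$, $v_{k_2}(3)$, $v_{k_2}(2)$ respectively, and none of these is $v_{k_2}(0)$.

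There is essentially no obstacle: the statement is a finite check, as the paper indicates. The only point deserving care is the bookkeeping — not conflating the position $\ell$ that a level occupies in the template $\omega_j$ (which is what indexes $I$, and hence what $B_j$ records) with the index $i$ in the name $v_{k_2}(i)$ of that level. Keeping those two indexings separate, the four comparisons above complete the argument.
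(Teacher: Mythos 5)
Your proof is correct and is exactly the finite verification the paper has in mind when it declares this proposition "obvious" without further argument: the range of each $\phi_{\omega_j}$ is the single level in position $B_j$, and in each of the four templates that level ($v_{k_2}(2)$, $v_{k_2}(1)$, $v_{k_2}(3)$, $v_{k_2}(2)$ respectively) is not the global cut $v_{k_2}(0)$. Your care in distinguishing the position index from the index in the name $v_{k_2}(i)$ is exactly the right bookkeeping point.
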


\noindent We now define the ``good set" at stage $2$, and establish its most important property (Proposition \ref{stage2goodsetprop}), which is obvious at this stage because $\mathcal Q_{k_0}$ is the trivial canonical template.

\begin{definition}
Let $\mathcal G_2 = \{u_{k_2}(2), \overline u_{k_2}(2)\}$. 

\end{definition}

Recall the vertical map $\pi: \mathcal Q_{k_{1}} \rightarrow \mathcal Q_{k_0}$ which was defined in Section \ref{Pkpi}.  We similarly define $\zeta: \Omega_{k_2} \rightarrow \mathcal Q_{k_1}$.  The following is immediate since $\mathcal Q_{k_0} = \mathcal Q_{k_1} = Y$.

\begin{proposition} \label{stage2goodsetprop}
If $c \in \mathcal G_2$ and $\omega, \omega' \in \mathcal B_{k_2}(Y)$, then $c$ is in the domain of both $\phi_\omega$ and $\phi_{\omega'}$, and $\pi \circ \zeta \circ \phi_{\omega}(c) = \pi \circ \zeta \circ \phi_{\omega'}(c)$.
\end{proposition}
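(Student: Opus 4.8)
The plan is to note that this proposition has essentially no content beyond unwinding the definitions, and that the only thing worth checking is that $c$ lies in the common domain of all the $\phi_\omega$.

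First I would verify the domain claim. By the construction in the subsection ``Maps to Basic Templates'', for each basic template $\omega = \omega_j$ ($1 \leq j \leq 4$) and each $i \in \{0,1\}$, the partial interval bijection $\phi_{\omega_j}: \mathcal P_{k_2}(i) \rightarrow \omega_j$ has the form $[I, I, A, B_j, f_j]$ with $A = \{2\}$, a value independent of $j$ and $i$. By Definition \ref{domainandrange}, the domain of $\phi_{\omega_j}$ is therefore the single level of $\mathcal P_{k_2}(i)$ represented by $2 \in I$, which is $u_{k_2}(2)$ when $i = 0$ and $\overline u_{k_2}(2)$ when $i = 1$. Hence the domain of every $\phi_\omega$, $\omega \in \mathcal B_{k_2}(Y)$, is exactly one of the two elements of $\mathcal G_2$; in particular any $c \in \mathcal G_2$ lies in the domain of each such $\phi_\omega$.

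Next I would compute $\pi \circ \zeta \circ \phi_\omega(c)$. The image $\phi_\omega(c)$ is a single level of $\omega$, i.e.\ a $k_2$-canonical cylinder of the binary odometer. Applying $\zeta: \Omega_{k_2} \rightarrow \mathcal Q_{k_1}$ sends this cylinder to the unique level of $\mathcal Q_{k_1}$ containing it; but $k_1 = 0$, so $\mathcal Q_{k_1}$ is the trivial canonical template whose only level is $Y$. Thus $\zeta \circ \phi_\omega(c) = Y$, a value that depends on neither $\omega$ nor $c$. Applying $\pi: \mathcal Q_{k_1} \rightarrow \mathcal Q_{k_0}$ (again $k_0 = 0$) likewise returns the unique level $Y$ of $\mathcal Q_{k_0}$. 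Therefore $\pi \circ \zeta \circ \phi_\omega(c) = Y = \pi \circ \zeta \circ \phi_{\omega'}(c)$ for all $\omega, \omega' \in \mathcal B_{k_2}(Y)$, which is the asserted equality.

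There is no real obstacle here: the statement is recorded only as the stage-$2$ instance of the general ``good set'' consistency property, which will acquire genuine content at later stages where $\pi$ and $\zeta$ land in nontrivial templates. The single point requiring care is that the common value $A = \{2\}$ shared by all four $\phi_{\omega_j}$ is what forces every $\phi_\omega$ to have domain contained in $\mathcal G_2$; the rest collapses simply because $\mathcal Q_{k_0} = \mathcal Q_{k_1} = Y$.
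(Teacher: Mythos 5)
Your proof is correct and matches the paper's reasoning, which simply records the proposition as immediate because $\mathcal Q_{k_0} = \mathcal Q_{k_1}$ are trivial one-level templates; you have merely written out the two routine checks (that $A = \{2\}$ forces the domain of every $\phi_{\omega_j}$ to be exactly the corresponding element of $\mathcal G_2$, and that $\pi\circ\zeta$ lands in the single level $Y$). No issues.
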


\subsection{Maps to Diminished Templates} Let $\omega$ be a basic template in $\mathcal B_{k_2}(Y)$.  Define $\phi_{\omega^-(u)}$ to match with $\phi_\omega$, and define $\phi_{\omega^-(d)}$ to match with $\phi_{\omega_p}$, as in Definition \ref{match}.  That these bijections are well defined follows from Proposition \ref{nobottomtoporcut}; for example, suppose $\omega$ is given by \[\omega: v_{k_2}(3) \prec v_{k_2}(0) \prec \mbox{\fbox{$v_{k_2}(1)$}} \prec v_{k_2}(2),\] where the box indicates the level that is in the range of $\phi_\omega$.  Then, using this same ``box" notation to indicate the levels in the ranges of the corresponding partial interval bijections, we have \[\omega^-(u): v_{k_2}(3) \prec \mbox{\fbox{$v_{k_2}(1)$}} \prec v_{k_2}(2) \prec v_{k_2}(3),\] \[\omega^-(d) : v_{k_2}(2) \prec \mbox{\fbox{$v_{k_2}(3)$}} \prec v_{k_2}(1) \prec v_{k_2}(2), \] and \[\omega_p : v_{k_2}(2) \prec \mbox{\fbox{$v_{k_2}(3)$}} \prec v_{k_2}(0) \prec v_{k_2}(1).\]  Now observe that $\omega_p^-(u)$, and its corresponding partial interval bijection, are: \[\omega_p^-(u) : v_{k_2}(2) \prec \mbox{\fbox{$v_{k_2}(3)$}} \prec v_{k_2}(1) \prec v_{k_2}(2).\]  That $\phi_{\omega_p^-(d)}$ matches with $\phi_{\omega^-(d)}$ is not a coincidence:

\begin{lemma} \label{stage2diminishedmatching}
Given any basic template $\omega \in \mathcal B_{k_2}(Y)$, $\phi_{\omega^-(d)}$ matches with $\phi_{\omega_p^-(u)}$.

\end{lemma}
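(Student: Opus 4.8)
The plan is to prove the matching identity by a direct, explicit computation comparing the two diminished templates $\omega^-(d)$ and $\omega_p^-(u)$ as ordered sets, and checking that the levels singled out by their associated partial interval bijections are the same $k_2$-canonical cylinder. The key point, which makes the lemma true rather than coincidental, is that $\omega^-(d)$ is built from $\omega_p$ (``predecessor, then delete the global cut, then add a level at the bottom''), while $\omega_p^-(u)$ is built from $\omega_p$ directly (``delete the global cut of $\omega_p$, then add a level at the top''); and the $\phi$-maps are defined in terms of $\phi_{\omega_p}$ via the matching condition of Definition~\ref{match} in both cases — $\phi_{\omega^-(d)}$ matches $\phi_{\omega_p}$ by fiat, and $\phi_{\omega_p^-(u)}$ matches $\phi_{\omega_p}$ by fiat. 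So both maps have range a box on ``the same level of $\omega_p$,'' namely the level represented by $f_{j}(\{2\}) \in \{1,2\}$ for the appropriate index $j$, and the lemma reduces to the assertion that deleting the global cut of $\omega_p$ and re-adding a level at the bottom versus at the top produces two templates in which that particular level occupies a position representing the same cylinder.

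First I would set up coordinates: write $\omega_p$ in its general form $v_{k_2}(i) \prec v_{k_2}(i+1) \prec v_{k_2}(i+2) \prec v_{k_2}(i-1)$ (indices mod $4$), identify where the global cut $v_{k_2}(0)$ sits, and record which level is boxed for $\phi_{\omega_p}$ using the recipe from the ``Maps to Basic Templates'' subsection (boxed level $= B_j$, which is $\{2\}$ or $\{1\}$ depending on the case $j$). Then I would write out $\omega^-(d)$ — recalling it equals ``$\omega$ with the global cut removed and the pre-image of its bottom level added at the bottom,'' and that $\phi_{\omega^-(d)}$ matches $\phi_{\omega_p}$ — and separately write out $\omega_p^-(u)$ — ``$\omega_p$ with the global cut removed and the image of its top level added at the top,'' with $\phi_{\omega_p^-(u)}$ matching $\phi_{\omega_p}$. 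Removing the global cut from $\omega_p$ in the first construction and from $\omega_p$ in the second construction removes the *same* level (since $\omega^-(d)$ deletes $\omega_p$'s cut — this is exactly what ``one position higher mod $2^k$'' buys us), so the two resulting length-$(2^k-1)$ cores are literally the same ordered word; the two templates then differ only in whether a level is tacked on at the bottom or the top, and in both cases Proposition~\ref{nobottomtoporcut} (applied to $\omega_p$) guarantees the boxed level is interior, hence unaffected by that tacking. Therefore the boxed level occupies the same position in the common core in both templates and represents the same $k_2$-canonical cylinder, which is the matching condition.

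Because there are only four basic templates, I would close the argument by simply carrying out this comparison in each of the four cases $\omega \in \{\omega_1,\omega_2,\omega_3,\omega_4\}$ — or, more cleanly, noting that the argument above is uniform in $i$ and so a single generic computation suffices; the worked example already in the text ($\omega = v_{k_2}(3)\prec v_{k_2}(0)\prec v_{k_2}(1)\prec v_{k_2}(2)$, for which $\omega_p^-(u) : v_{k_2}(2)\prec \fbox{$v_{k_2}(3)$}\prec v_{k_2}(1)\prec v_{k_2}(2)$ and $\omega^-(d): v_{k_2}(2)\prec\fbox{$v_{k_2}(3)$}\prec v_{k_2}(1)\prec v_{k_2}(2)$) is precisely the representative case and already exhibits the desired equality.

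The step I expect to be the main obstacle is purely bookkeeping: being careful that ``the global cut of $\omega^-(d)$ is removed from $\omega$'' versus ``the global cut removed from $\omega_p$'' really do refer to the same level of $\omega_p$ — i.e. untangling the off-by-one between the ``predecessor'' shift and the ``remove the cut'' operation — and likewise that the bottom-level-added-at-bottom of $\omega^-(d)$ coincides (as a cylinder, in the right position) with the top-level-added-at-top of $\omega_p^-(u)$ after the cyclic wraparound. Once the indices are pinned down via Corollary~\ref{partitioncor}-style position arithmetic mod $2^{k_2}$, the verification is mechanical and the four cases are immediate.
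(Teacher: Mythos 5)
Your proposal is correct and its core observation is exactly the paper's proof: both $\phi_{\omega^-(d)}$ and $\phi_{\omega_p^-(u)}$ are \emph{defined} to match with $\phi_{\omega_p}$, and since matching means representing identical levels on a common domain $A$, matching with a common map is transitive. The explicit coordinate verification you add (that the two diminished templates share the same core and that Proposition~\ref{nobottomtoporcut} keeps the boxed level interior) is a sound sanity check but is not needed beyond the one-line argument.
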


\begin{proof}
Both $\phi_{\omega^-(d)}$ and $\phi_{\omega_p^-(u)}$ are defined to match with $\phi_{\omega_p}$.

\end{proof}

\subsection{Maps to Augmented Templates}  Similarly, given $\omega \in \mathcal B_{k_2}(Y)$, define $\phi_{\omega^+(d)}$ to match with $\phi_\omega$, and define $\phi_{\omega^+(u)}$ to match with $\phi_{\omega_p}$.  That these definitions are possible again follows from Proposition~\ref{nobottomtoporcut}.  

\begin{lemma} \label{stage2augmentedmatching}
Given any basic template $\omega \in \mathcal B_{k_2}(Y)$, $\phi_{\omega^+(u)}$ matches with $\phi_{\omega_p^+(d)}$.

\end{lemma}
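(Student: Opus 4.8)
The strategy mirrors exactly the proof of Lemma \ref{stage2diminishedmatching}: exhibit a single partial interval bijection with which both $\phi_{\omega^+(u)}$ and $\phi_{\omega_p^+(d)}$ are defined to match, then invoke transitivity of the matching relation. By the construction in the subsection ``Maps to Augmented Templates,'' $\phi_{\omega^+(u)}$ is defined to match with $\phi_{\omega_p}$. So it suffices to observe that $\phi_{\omega_p^+(d)}$ is also defined to match with $\phi_{\omega_p}$: indeed, applying the defining recipe ``$\phi_{\sigma^+(d)}$ matches with $\phi_\sigma$'' to the basic template $\sigma = \omega_p$ gives precisely that $\phi_{\omega_p^+(d)}$ matches with $\phi_{\omega_p}$. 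Since matching of partial interval bijections is transitive (two maps that each match a common third map agree, level by level, on the shared index set $A$, hence match each other), the conclusion follows.

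First I would spell out the transitivity remark, since the matching relation of Definition \ref{match} is only stated for pairs: if $\hat f$ and $\hat g$ both match $\hat h = [I,J,A,D,h]$, then for each $i \in A$ the level represented by $f(i)$ equals the level represented by $h(i)$, which equals the level represented by $g(i)$; thus $\hat f$ matches $\hat g$. This uses that all three maps share the same first three components $I$, $J$, $A$ — which holds here because $\phi_{\omega^+(u)}$, $\phi_{\omega_p^+(d)}$, and $\phi_{\omega_p}$ all have domain index set $A = \{2\}$ inside $I = [0,1,2,3]$, and all have image index set inside the common $J = I$ (the augmented templates $\omega^+(u)$ and $\omega_p^+(d)$ each have height $2^{k_2} = 4$, so they are represented by the same interval $J$ as the basic templates). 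Well-definedness of $\phi_{\omega^+(u)}$ and $\phi_{\omega_p^+(d)}$ themselves is not something to reprove here — it is asserted in the text just above via Proposition \ref{nobottomtoporcut}.

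There is essentially no obstacle; the only point requiring a moment's care is bookkeeping about \emph{which} basic template plays the role of the ``base'' in each defining clause. One must check that the recipe ``$\phi_{\omega^+(d)}$ matches $\phi_\omega$,'' when the role of $\omega$ is played by the predecessor template $\omega_p$, yields exactly $\phi_{(\omega_p)^+(d)}$ matching $\phi_{\omega_p}$ — i.e.\ that $(\omega_p)^+(d)$ is the template the lemma calls $\omega_p^+(d)$, which is immediate from the notation. A one-line proof therefore suffices:

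\begin{proof}
By definition, $\phi_{\omega^+(u)}$ matches with $\phi_{\omega_p}$. Applying the definition of $\phi_{\sigma^+(d)}$ with $\sigma = \omega_p$, we also have that $\phi_{\omega_p^+(d)}$ matches with $\phi_{\omega_p}$. Since $\phi_{\omega^+(u)}$, $\phi_{\omega_p^+(d)}$, and $\phi_{\omega_p}$ all share the first three components $I = [0,1,2,3]$, $J = I$, and $A = \{2\}$, and matching is transitive (if two partial interval bijections each send the levels indexed by $A$ to the same levels as a common third map, they send them to the same levels as each other), it follows that $\phi_{\omega^+(u)}$ matches with $\phi_{\omega_p^+(d)}$.
\end{proof}
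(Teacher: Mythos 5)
Your proof is correct and takes essentially the same approach as the paper, whose entire argument is the observation that both $\phi_{\omega^+(u)}$ and $\phi_{\omega_p^+(d)}$ are defined to match with $\phi_{\omega_p}$. Your additional remarks on the transitivity of matching simply make explicit a step the paper leaves implicit.
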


\begin{proof}
Both $\phi_{\omega^+(u)}$ and $\phi_{\omega_p^+(d)}$ are defined to match with $\phi_{\omega_p}$.
\end{proof}

\subsection{Maps to Missing and Extra Templates}

Recall the modified template sets $\widetilde{\mathcal P}_{k_2}$ and $\widetilde \Omega_{k_2}$ defined in \eqref{modtem}  and \eqref{modom}.
Given $i \in \{0, 1\}$ and $\omega^m \in \mathcal M_{k_2}(Y)$, we know from Propositions \ref{nobottomortop} and \ref{nobottomtoporcut} that the bottom level of $\mathcal P_{k_2}(i)$ is not in the domain of $\phi_\omega : \mathcal P_{k_2}(i) \rightarrow \omega$, nor is the bottom level of $\omega$ in the range.  Therefore, if $\phi_\omega = [I, I, A, B, f]$, where $I = [0, 1, \ldots, 2^{k_2}-1]$, then we may define $\phi_{\omega^m} : \mathcal P_{k_2}^m(i) \rightarrow \omega^m$ by $\phi_{\omega^m} = [I', I', A, B, f]$, where $I' = [1, 2, \ldots, 2^{k_2}-1]$.  In other words, $\phi_{\omega^m}: \mathcal P_{k_2}^m(i) \rightarrow \omega^m$ is identical to $\phi_{\omega} : \mathcal P_{k_2}(i) \rightarrow \omega$ except that the bottom level of $\omega$ is technically missing.

Similarly, given $i, j \in \{0, 1\}$ and $\omega^e \in \mathcal E_{k_2}(Y)$, if again $\phi_\omega = [I, I, A, B, f]$, then we may define $\phi_{\omega^e}: \mathcal P_{k_2}^{e(j)}(i) \rightarrow \omega^e$ by $\phi_{\omega^e} = [I'', I'', A, B, f]$, where $I'' = [0, 1, \ldots, 2^{k_2}]$.  In other words, $\phi_{\omega^e}: \mathcal P_{k_2}^{e(j)}(i) \rightarrow \omega^e$ is identical to $\phi_\omega: \mathcal P_{k_2}(i) \rightarrow \omega$ except that there is technically an extra level at the top.

\section{Morse system template sets $\mathcal T_k$ and $\widetilde{\mathcal T}_k$} \label{tksection}

Similar to $\Omega_k$ and $\widetilde{\Omega}_k$, the template sets $\mathcal T_k$ and $\widetilde{\mathcal T}_k$, defined in Section~\ref{tkdefinition} below, consist of templates of the following types: basic, diminished, augmented, missing, and extra.  We define these types here.

\subsection{Basic Templates} \label{morsebasictemplates} A {\em basic template} at stage $k$ is any template that satisfies the all of the following conditions: it has height $2^k$, its elements are $k$-canonical cylinders, and its ordering is ``allowed" by the action of $T$.

For the ordering of a template $\tau$ to be ``allowed" by the action of $T$, $\tau$ must have one of the following forms:
\begin{enumerate}
\item $\tau = \mathcal P_k(0)$.  This is a special basic template we call the {\em zero-template}.
\item $\tau = \mathcal P_k(1)$.  This is a special basic template we call the {\em one-template}.
\item \label{fourforms} For some (unique) $i \in \{1, 2, \ldots, 2^k-1\}$, $\tau$ is one of the following four templates:  \[\tau: u_k(i) \prec u_k(i+1) \prec \cdots \prec u_k(2^k-1) \prec u_k(0) \prec u_k(1) \prec \cdots \prec u_k(i-1)\]
 \[\tau: u_k(i) \prec u_k(i+1) \prec \cdots \prec u_k(2^k-1) \prec \overline u_k(0) \prec \overline u_k(1) \prec \cdots \prec \overline u_k(i-1)\]
 \[\tau: \overline u_k(i) \prec \overline u_k(i+1) \prec \cdots \prec \overline u_k(2^k-1) \prec u_k(0) \prec u_k(1) \prec \cdots \prec u_k(i-1)\]
 \[\tau: \overline u_k(i) \prec \overline u_k(i+1) \prec \cdots \prec \overline u_k(2^k-1) \prec \overline u_k(0) \prec \overline u_k(1) \prec \cdots \prec \overline u_k(i-1)\]
\end{enumerate}

\noindent Let $\mathcal B_k(X)$ be the set of all basic templates for the Morse system at stage $k$.


In each basic template, exactly one of the levels is either $u_k(0)$ or $\overline u_k(0)$.  Call that level the {\em global cut}.  

\subsection{Predecessor and Successor Templates}

If $\tau \in \mathcal B_k(X)$ is a basic template whose global cut is neither the bottom level nor the top level, then $\tau$ must be one of the four templates listed in item \ref{fourforms} of Section \ref{morsebasictemplates} above.  In this case, define the {\em predecessor} template for $\tau$, denoted $\tau_p$, to be the basic template of that same form whose global cut is one position higher, and define the {\em successor} template for $\tau$, denoted $\tau_s$, to be the basic template of that same form whose global cut is one position lower.  For example, if $\tau \in \mathcal B_k(X)$ is the template  \[\tau: u_k(i) \prec u_k(i+1) \prec \cdots \prec u_k(2^k-1) \prec \overline u_k(0) \prec \overline u_k(1) \prec \cdots \prec \overline u_k(i-1),\] then $\tau_p \in \mathcal B_k(X)$ is  \[\tau_p: u_k(i-1) \prec u_k(i) \prec \cdots \prec u_k(2^k-1) \prec \overline u_k(0) \prec \overline u_k(1) \prec \cdots \prec \overline u_k(i-2),\] and $\tau_s \in \mathcal B_k(X)$ is \[\tau_s: u_k(i+1) \prec u_k(i+2) \prec \cdots \prec u_k(2^k-1) \prec \overline u_k(0) \prec \overline u_k(1) \prec \cdots \prec \overline u_k(i).\] 

If $\tau \in \mathcal B_k(X)$ is a basic template whose global cut is the bottom level, then $\tau$ is either the zero-template or the one-template.  In this case, define two predecessor templates for $\tau$, denoted $\tau_{p(0)}$ and $\tau_{p(1)}$, as follows:

\begin{itemize}
\item $\tau_{p(0)}$ is $\tau$ with the top level removed and $u_k(2^k-1)$ tacked on at the bottom.
\item $\tau_{p(1)}$ is $\tau$ with the top level removed and $\overline u_k(2^k-1)$ tacked on at the bottom. 
\end{itemize}

\noindent Also in this case, define two successor templates for $\tau$, denoted $\tau_{s(0)}$ and $\tau_{s(1)}$, as follows: 

\begin{itemize}
\item $\tau_{s(0)}$ is $\tau$ with the bottom level removed and $u_k(0)$ tacked on at the top.
\item $\tau_{s(1)}$ is $\tau$ with the bottom level removed and $\overline u_k(0)$ tacked on at the top.
\end{itemize}


If $\tau \in \mathcal B_k(X)$ is a basic template whose global cut is the top level (there are exactly two such basic templates), then define two predecessor templates for $\tau$, denoted $\tau_{p(0)}$ and $\tau_{p(1)}$, as follows:

\begin{itemize}
\item $\tau_{p(0)}$ is $\tau$ with the top level (the global cut) removed and $u_k(0)$ tacked on at the bottom.
\item $\tau_{p(1)}$ is $\tau$ with the top level (the global cut) removed and $\overline u_k(1)$ tacked on at the bottom.
\end{itemize}

\noindent Also in this case, define a single successor template for $\tau$, denoted $\tau_s$, to be the basic template that is $\tau$ with its bottom level removed, and with the level that would naturally follow the top level tacked on at the top.  For example, if $\tau \in \mathcal B_k(X)$ is \[\tau: u_k(1) \prec u_k(2) \prec \cdots \prec u_k(2^k-1) \prec \overline u_k(0),\] then $\tau_s$ is \[\tau_s: u_k(2) \prec u_k(3) \prec \cdots \prec u_k(2^k-1) \prec \overline u_k(0) \prec \overline u_k(1).\]

Because certain templates have multiple predecessor/successor templates, while others have only one, the following definition will be useful.

\begin{definition} \rm
If $\tau \in \mathcal B_k(X)$ is a basic template, then a {\em predecessor template for $\tau$} is any basic template of the form $\tau_p$, $\tau_{p(0)}$, or $\tau_{p(1)}$, as defined above.  A {\em successor template for $\tau$} is any basic template of the form $\tau_s$, $\tau_{s(0)}$, or $\tau_{s(1)}$, as defined above.
\end{definition}

\begin{lemma}
If $\tau \in \mathcal B_k(X)$ is a basic template, then all predecessor templates for $\tau$ agree in every level except possibly the bottom.  Also, all successor templates for $\tau$ agree in every level except possibly the top.

\end{lemma}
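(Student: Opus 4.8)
The plan is to reduce the lemma to a single structural observation: for \emph{every} basic template $\tau \in \mathcal B_k(X)$, each predecessor template for $\tau$ is obtained by deleting the top level of $\tau$ and then adjoining a single new level at the bottom, and each successor template for $\tau$ is obtained by deleting the bottom level of $\tau$ and then adjoining a single new level at the top. Granting this, the $2^k-1$ levels that survive the deletion of the top level of $\tau$ are the same for every predecessor template, so any two predecessor templates carry exactly those levels at heights $1, \dots, 2^k-1$ and can differ only at height $0$; the statement about successor templates follows by the dual argument with ``top'' and ``bottom'' interchanged.

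To establish the observation I would split on the location of the global cut of $\tau$. First, if the global cut is neither the bottom nor the top level, then $\tau$ is one of the four templates of item \ref{fourforms} of Section \ref{morsebasictemplates}, and by definition $\tau$ has a unique predecessor $\tau_p$ and a unique successor $\tau_s$; comparing $\tau$ with $\tau_p$ (resp.\ $\tau_s$) for each of the four shapes shows directly that $\tau_p$ is $\tau$ with its top level removed and the forced preceding level adjoined at the bottom (resp.\ $\tau_s$ is $\tau$ with its bottom level removed and the forced following level adjoined at the top), so the observation holds trivially. Second, if the global cut is the top level, then by definition $\tau_{p(0)}$ and $\tau_{p(1)}$ are both formed by deleting the global cut---which here \emph{is} the top level---and adjoining a new bottom level, while the unique successor $\tau_s$ is $\tau$ with its bottom level removed and the level following the top level adjoined at the top; so the observation holds again, and in fact $\tau_{p(0)}$ and $\tau_{p(1)}$ share every level but the bottom. (It is not needed for the statement, but here one can also check that at most one of $\tau_{p(0)}, \tau_{p(1)}$ is actually a basic template.)

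The remaining case is that the global cut of $\tau$ is the bottom level, i.e.\ $\tau$ is the zero-template or the one-template; this is the only case in which $\tau$ genuinely has two predecessor and two successor templates. For the zero-template, $\tau_{p(0)}$ and $\tau_{p(1)}$ are obtained by placing $u_k(2^k-1)$, respectively $\overline u_k(2^k-1)$, below the common block $u_k(0) \prec u_k(1) \prec \cdots \prec u_k(2^k-2)$, and $\tau_{s(0)}, \tau_{s(1)}$ by placing $u_k(0)$, respectively $\overline u_k(0)$, above the common block $u_k(1) \prec u_k(2) \prec \cdots \prec u_k(2^k-1)$; the one-template is handled identically via the involution $u_k(i) \leftrightarrow \overline u_k(i)$. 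In each subcase the two predecessor templates agree in every level but the bottom and the two successor templates agree in every level but the top, which completes the observation and hence the lemma. The only thing needing care is the bookkeeping: that these three cases are exhaustive (a template of type \ref{fourforms} never has its global cut at the bottom, and has it at the top precisely when $i=1$) and that the ``edge'' instances of type \ref{fourforms}---where the global cut sits at height $1$ or at height $2^k-2$---fit the unifying description; there is no conceptual obstacle beyond this.
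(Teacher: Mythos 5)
Your proof is correct, and it is exactly the verification the paper intends: the paper states this lemma without any proof, treating it as immediate from the definitions of predecessor and successor templates, and your case analysis on the position of the global cut (interior, top, bottom) simply makes that explicit. The key unifying observation---every predecessor is $\tau$ with its top level deleted and one new bottom level adjoined, and dually for successors---is accurate in all three cases, so nothing is missing.
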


\subsection{Diminished Templates}  

Given a basic template $\tau \in \mathcal B_k(X)$ that is neither the zero-template nor the one-template, define two additional, {\em diminished} templates $\tau^-(d)$ and $\tau^-(u)$ as follows:

\begin{itemize}
\item $\tau^-(d)$ is $\tau$ with the global cut removed, and with an extra level tacked on at the bottom (the level that would naturally precede the bottom level).

\item $\tau^-(u)$ is $\tau$ with the global cut removed, and with an extra level tacked on at the top (the level that would naturally follow the top level).

\end{itemize}

\noindent For example, suppose $\tau \in \mathcal B_k(X)$ is given by  \[\tau: u_k(i) \prec u_k(i+1) \prec \cdots \prec u_k(2^k-1) \prec \overline u_k(0) \prec \overline u_k(1) \prec \cdots \prec \overline u_k(i-1).\]

\noindent Then the global cut is $\overline u_k(0)$, and the bottom level is $u_k(i)$.  The level that would naturally precede this bottom level is $u_k(i-1)$.  Therefore the diminished template $\tau^-(d)$ is given by  \[\tau^-(d): u_k(i-1) \prec u_k(i) \prec \cdots \prec u_k(2^k-1)  \prec \overline u_k(1) \prec \cdots \prec \overline u_k(i-1)\]
  
  Now suppose $\tau \in \mathcal B_k(X)$ is either the zero-template or the one-template.  Then there are two levels  that could naturally precede the bottom level (either $u_k(2^k-1)$ or $\overline u_k(2^k-1)$), as well as two levels that could naturally follow the top level (either $u_k(0)$ or $\overline u_k(0)$).  For this reason, we define four diminished templates, as follows:
  
  \begin{itemize}
  \item $\tau^-(d, 0)$ is $\tau$ with the global cut (which, in this case, is also the bottom level) removed, and with the level $u_k(2^k-1)$ tacked on in its place.
  \item $\tau^-(d, 1)$ is $\tau$ with the global cut (which, in this case, is also the bottom level) removed, and with the level $\overline u_k(2^k-1)$ tacked on in its place.
  \item $\tau^-(u, 0)$ is $\tau$ with the global cut (which, in this case, is also the bottom level) removed, and with the level $u_k(0)$ tacked on at the top.
    \item $\tau^-(u, 1)$ is $\tau$ with the global cut (which, in this case, is also the bottom level) removed, and with the level $\overline u_k(0)$ tacked on at the top.
  
  \end{itemize}

\noindent For example, if $\tau$ is the zero template, then $\tau^-(d, 1)$ is \[\tau^-(d, 1) = \overline u_k(2^k-1) \prec u_k(1) \prec u_k(2) \prec \cdots \prec u_k(2^k-1).\]

\noindent Let $\mathcal D_k(X)$ be the set of all diminished templates for the Morse system at stage $k$.  Note that all diminished templates in $\mathcal D_k(X)$ have height $2^k$.

\subsection{Augmented Templates} Given a basic template $\tau \in \mathcal B_k(X)$, define four additional, {\em augmented} templates as follows:

\begin{itemize}

\item $\tau^+(d, 0)$ is $\tau$ with an extra copy of $u_k(0)$ inserted directly before the global cut, and with the bottom level deleted.

\item $\tau^+(d, 1)$ is $\tau$ with an extra copy of $\overline u_k(0)$ inserted directly before the global cut, and with the bottom level deleted.

\item $\tau^+(u, 0)$ is $\tau$ with an extra copy of $u_k(0)$ inserted directly before the global cut, and with the top level deleted.

\item $\tau^+(u, 1)$ is $\tau$ with an extra copy of $\overline u_k(0)$ inserted directly before the global cut, and with the top level deleted.

\end{itemize}

\noindent For example, if $\tau \in \mathcal B_k(X)$ is given by  \[\tau: u_k(i) \prec u_k(i+1) \prec \cdots \prec u_k(2^k-1) \prec \overline u_k(0) \prec \overline u_k(1) \prec \cdots \prec \overline u_k(i-1),\] 

\noindent then $\tau^+(d, 0)$ is given by \[\tau^+(d, 0): u_k(i+1) \prec u_k(i+2) \prec \cdots \prec u_k(2^k-1) \prec u_k(0) \prec \overline u_k(0) \prec \cdots \prec \overline u_k(i-1).\] 

\noindent Let $\mathcal A_k(X)$ be the set of all augmented templates for the Morse system at stage $k$.  Note that all augmented templates in $\mathcal A_k(X)$ have height $2^k$.

\subsection{Missing Templates}  Given a basic template $\tau \in \mathcal B_k(X)$, define one additional, {\em missing} template, denoted $\tau^m$, to be $\tau$ with its bottom level removed.  For example, if $\tau$ is the zero-template, then \[\tau^m = u_k(1) \prec u_k(2) \prec \cdots \prec u_k(2^k-1).\]  Let $\mathcal M_k(X)$ be the set of all missing templates for the Morse system at stage $k$.  Note that all missing templates have height $2^k-1$.

\subsection{Extra Templates}  Given a basic template $\tau \in \mathcal B_k(X)$ that is neither the zero-template nor the one-template, define one additional, {\em extra} template, denoted $\tau^e$, to be $\tau$ with one extra level tacked on at the top (the level that would naturally follow the top level).  For example, if $\tau \in \mathcal B_k(X)$ is given by  \[\tau: u_k(i)\prec \cdots \prec u_k(2^k-1) \prec \overline u_k(0) \prec \overline u_k(1) \prec \cdots \prec \overline u_k(i-1),\] then $\tau^e$ is given by  \[\tau^e: u_k(i)  \prec \cdots \prec u_k(2^k-1) \prec \overline u_k(0) \prec \overline u_k(1) \prec \cdots \prec \overline u_k(i-1) \prec \overline u_k(i).\]  If $\tau$ is the zero-template or the one-template, then define two extra templates, as follows:

\begin{itemize}
\item $\tau^{e(0)}$ is $\tau$ with the level $u_k(0)$ tacked on at the top.
\item $\tau^{e(1)}$ is $\tau$ with the level $\overline u_k(0)$ tacked on at the top.
\end{itemize}

\noindent Let $\mathcal E_k(X)$ be the set of all extra templates for the Morse system at stage $k$.  Note that all extra templates in $\mathcal E_k(X)$ have height $2^k+1$.

\subsection{The Template Sets $\mathcal T_k$ and $\widetilde{\mathcal T}_k$} \label{tkdefinition}

Define \[\mathcal T_k = \mathcal B_k(X) \cup \mathcal D_k(X) \cup \mathcal A_k(X) \text{ and } \widetilde{\mathcal T}_k = \mathcal M_k(X) \cup \mathcal E_k(X).\]

\section{The Sequences $(\varepsilon_n)$ and $(k_n)$} \label{sequences}

Let $(\varepsilon_n)$ be a summable sequence.  Recall that we defined $k_0 = k_1 = 0$ and $k_2 = 2$.  Now define $k_n$, $n > 2$, by the following two-step recursion.   Given $k_n$, for $n$ even, define $k_{n+1}$ and $k_{n+2}$ as follows.

First choose $m_1 \in \n$ large enough so that \begin{equation} \label{m1choice}\frac{2(1+2^{k_n})}{2(1+2^{k_n})+m_1} < \varepsilon_{n},\end{equation} and pick $k_{n+1} \in \n$ large enough so that  \begin{equation} \label{knplusonechoice} 2^{k_{n+1}} \geq m_1\cdot 2^{k_{n}} + 2 \cdot (2^{k_n}+2^{k_{n}} \cdot 2^{k_{n}}). \end{equation} 

Then choose $m_2 \in \n$ large enough so that \begin{equation} \label{m2choice} \frac{2(1+2^{k_{n+1}})}{2(1+2^{k_{n+1}})+m_2} < \varepsilon_{n+1}, \end{equation} and pick $k_{n+2} \in \n$ large enough so that \begin{equation} \label{knplustwochoice} 2^{k_{n+2}} \geq m_2\cdot 2^{k_{n+1}} + 2 \cdot (2^{k_{n+1}}+2^{k_{n+1}} \cdot 2^{k_{n+1}}).\end{equation}

\noindent Note that inequalities (\ref{m2choice}) and (\ref{knplustwochoice}) are the same as (\ref{m1choice}) and (\ref{knplusonechoice}), only with $k_n$, $k_{n+1}$, $m_1$, and $\varepsilon_n$ replaced with $k_{n+1}$, $k_{n+2}$, $m_2$, and $\varepsilon_{n+1}$.

\section{From Stage $n$ to $n+2$: Frequently Used Notation} \label{freqnotation}

Assume the diagram in Figure~\ref{thefigure} has been built down to stage $n$, where $n$ is even.  Using our choices of $k_{n+1}$ and $k_{n+2}$ from Section \ref{sequences}, we then build the diagram down to stage $n+2$.  The construction is largely the same whether $n \equiv 2$ or $n \equiv 0 \mod 4$; Sections \ref{freqnotation} - \ref{blockpartitionsoftauhat} apply in either case.  

Define $J,J'\subset\mathbb Z$ by

  \[J = [0, 1, \ldots, 2^{k_{n+2}}-1] \text{ and } J' = [0, 1, \ldots, 2^{k_{n+1}}-1].\]    Define the {\em bottom and top global safe zones in $J$} to be the subintervals \[ [0, \ldots, 2^{k_{n+1}}+2^{k_{n+1}}\cdot 2^{k_{n+1}}-1] \text{ and } [2^{k_{n+2}}-(2^{k_{n+1}}+2^{k_{n+1}} \cdot 2^{k_{n+1}}), \ldots, 2^{k_{n+2}}-1] \] of $J$, respectively.  By (\ref{knplustwochoice}) the global safe zones are well-defined, and by (\ref{m2choice}) the fraction of $J$ in the global safe zones is less than $\varepsilon_{n+1}$.

Similarly, define the {\em bottom and top intermediate safe zones in $J'$} to be the subintervals \[ [0, \ldots, 2^{k_{n}}+2^{k_{n}}\cdot 2^{k_{n}}-1] \text{ and } [2^{k_{n+1}}-(2^{k_{n}}+2^{k_{n}} \cdot 2^{k_{n}}), \ldots, 2^{k_{n+1}}-1] \] of $J'$, respectively.  By \eqref{knplusonechoice} the intermediate safe zones are well-defined, and by  \eqref{m1choice} the fraction of $J'$ in the intermediate safe zones is less than $\varepsilon_{n}$.

 Let \[\mathcal B_{k_{n+2}} = \begin{cases} \mathcal B_{k_{n+2}}(X) & \mbox{if } n \equiv 2 \mod 4 \\ \mathcal B_{k_{n+2}}(Y) & \mbox{if } n \equiv 0 \mod 4 \end{cases}\] be the set of basic templates at stage $n+2$.  Let $\ds L = 2^{k_{n+2}}/2^{k_{n+1}}$ and $L' = 2^{k_{n+1}}/2^{k_n}$.  Fix a basic template $\tau \in \mathcal B_{k_{n+2}}$, represent $\tau$ with the interval $J$, and let $g = g(\tau)$ be the position in $J$ where the global cut occurs.  Let $a \in \{0, 1, \ldots, 2^{k_{n+1}}-1\}$ and $b \in \{0, 1, \ldots, 2^{k_n}-1\}$ be such that $g \equiv a \mod 2^{k_{n+1}}$ and $g \equiv b \mod 2^{k_n}$.   Let $\ds c = \left \lfloor \frac{a}{2^{k_n}} \right \rfloor$.

\subsection{Intermediate and Local Block Partitions of Basic Templates at Stage $n+2$} \label{blockpartitions}

In this section we define two partitions of $\tau$, which we call the intermediate and local block partitions of $\tau$.  We also define the intermediate and local cuts in $\tau$.

\begin{definition}[Intermediate Blocks in $J$] \rm

Depending on $a$, the {\em intermediate block partition of $J$} is either (\ref{simpleintblockpartition}) or (\ref{complicatedintblockpartition}) below:

If $a = 0$, then \begin{equation} \label{simpleintblockpartition} J = J(0) \cup J(1) \cup \cdots \cup J(L-1),\end{equation} where $J(m) = [m \cdot 2^{k_{n+1}}, \ldots, (m+1) \cdot 2^{k_{n+1}} - 1]$ for $0 \leq m \leq L-1$.

If $a \neq 0$, then \begin{equation} \label{complicatedintblockpartition}J = J(0) \cup J(1) \cup \cdots \cup J(L),\end{equation} where: \begin{itemize} \item $\ds J(0) = [0, \ldots, a-1]$ \item $\ds J(m) = [a + (m-1) \cdot 2^{k_{n+1}}, \ldots, a + m\cdot 2^{k_{n+1}}-1]$ for $1 \leq m \leq L - 1$ \item $\ds J(L) = [a + (L-1)\cdot 2^{k_{n+1}}, \ldots , 2^{k_{n+2}}-1]$.  \end{itemize}

Whether the intermediate block partition of $J$ takes the form (\ref{simpleintblockpartition}) or (\ref{complicatedintblockpartition}), the sub-intervals $J(m)\subset J$ are called the {\em intermediate blocks in $J$}.   

\end{definition}

\begin{definition}[Intermediate Blocks in $\tau$] \rm
The {\em intermediate block partition of $\tau$} is either \begin{equation} \label{simpleinttau} \tau = \tau(0) \cup \tau(1) \cup \cdots \cup \tau(L-1)  \end{equation} or \begin{equation} \label{complicatedinttau} \tau = \tau(0) \cup \tau(1) \cup \cdots \cup \tau(L), \end{equation} depending on whether the intermediate block partition of $J$ takes the form (\ref{simpleintblockpartition}) or (\ref{complicatedintblockpartition}), respectively.  Either way, the sub-templates $\tau(m) \subset \tau$ consist of those levels in $\tau$ that occur in positions from $J(m)$, and are called the {\em intermediate blocks in $\tau$}.  

\end{definition}

\begin{definition}[Intermediate Cuts in $\tau$] \rm

The first level in an intermediate block $\tau(m)$ is called an {\em intermediate cut in $\tau$}.

\end{definition}

\begin{definition}[Local Blocks in $J'$] \label{J'localblocks} \rm
The {\em local block partition of $J'$} is \begin{equation} \label{J'partition} J' = J'(0) \cup J'(1) \cup \cdots \cup J'(L' - 1),\end{equation} where $J'(i) = [i \cdot 2^{k_{n}}, \ldots, (i+1) \cdot 2^{k_{n}}-1]$ for $0 \leq i \leq L'-1$.  The sub-intervals $J'(m) \subset J'$, each of which has length $2^{k_n}$,  are called the {\em local blocks in $J'$}.  
\end{definition}

\begin{definition}[Local Blocks within Intermediate Blocks of height $2^{k_{n+1}}$] \label{localblocks1} \rm

If $\tau(m) \subset \tau$ is an intermediate block of height $2^{k_{n+1}}$, then the {\em local block partition of $\tau(m)$} is \begin{equation} \label{talltaupartition} \tau(m) = \tau(m, 0) \cup \tau(m, 1) \cup \cdots \cup \tau(m, L'-1),  \end{equation} where the sub-templates $\tau(m, i)$ consist of those levels in $\tau(m)$ that occur in positions from $J'(i)$, and are called the {\em local blocks in $\tau(m)$}.
\end{definition}

\begin{definition} \rm
Given an interval $I = [i, i+1, \ldots, i+\ell-1] \subset \z$ of length $\ell$ and $d \in \{0, \ldots, \ell-1\}$, denote the subinterval of $I$ consisting of the last $d$ integers in $I$ by $[I]^d$ and the subinterval of $I$ consisting of the first $\ell-d$ integers by $[I]_d$.  Namely, $[I]_d=I\setminus[I]^d$.


\end{definition}

%

\begin{definition}[Local Blocks in $\mbox{$[J]_a$}$ and $\mbox{$[J]^a$}$] \label{J'alocalblocks} \rm
The {\em local block partitions of $[J']_a$ and $[J']^a$} are \begin{equation} \label{J'_apartition} [J']_a = J'(0) \cup J'(1) \cup \cdots \cup J'(L'-c-2) \cup \left[ J'(L'-c-1)  \right]_b \end{equation} and \begin{equation} \label{J'^apartition} [J']^a = \begin{cases} \left[J'(L'-c-1) \right]^b \cup J'(L'-c) \cup \cdots \cup J'(L'-1) & \mbox{ if }a \neq 0 \\ \emptyset & \mbox{ if } a = 0.\end{cases} \end{equation} The sub-intervals in the partitions (\ref{J'_apartition}) and (\ref{J'^apartition}) are called the {\em local blocks in $[J']_a$ and $[J']^a$}.

\end{definition}

\begin{remark} \rm

If $a = 0$, then $b = c = 0$, so the local block partition (\ref{J'_apartition}) of $[J']_0$  is identical to the local block partition  (\ref{J'partition}) of $J'$.  Also $J' = [J']_0$.  Therefore Definition \ref{J'localblocks} is just a special case of Definition \ref{J'alocalblocks}.  

\end{remark}

\begin{definition}[Local Blocks within Intermediate Blocks of height $< 2^{k_{n+1}}$] \label{localblocks2} \rm

Intermediate blocks of height $< 2^{k_{n+1}}$ can only exist if $a \neq 0$; in this case, $\tau(L)$ and $\tau(0)$ are the only two such.  Represent $\tau(L)$ with $[J']_a$ and $\tau(0)$ with $[J']^a$.  Then the {\em local block partitions of $\tau(L)$ and $\tau(0)$} are \begin{equation} \label{tauLpartition} \tau(L) = \tau(L, 0) \cup \tau(L, 1) \cup \cdots \cup \tau(L, L'-c-2) \cup [\tau(L, L'-c-1)]_b  \end{equation} and \begin{equation} \label{tau0partition} \tau(0) = [\tau(0, L'-c-1)]^b \cup \tau(0, L'-c) \cup \cdots \cup \tau(0, L'-1), \end{equation} where the sub-templates in the partitions (\ref{tauLpartition}) and (\ref{tau0partition}) consist of those levels in $\tau(L)$ and $\tau(0)$ that occur in positions from the corresponding local blocks in $[J']_a$ and $[J']^a$, and are called the {\em local blocks in $\tau(L)$ and $\tau(0)$}.

\end{definition}

\begin{definition}[Local Blocks and Local Cuts in $\tau$]  \rm A {\em local block in $\tau$} is any local block from Definitions \ref{localblocks1} or \ref{localblocks2}.  The {\em local block partition of $\tau$} is the partition of $\tau$ into its local blocks. The first level in a local block is called a {\em local cut in $\tau$}.

\end{definition}

\subsection{Block Partitions of Non-Basic Templates at Stage $n+2$} 

Recall that each basic template $\tau \in \mathcal B_{k_{n+2}}$ has four variation types: diminished, augmented, missing, and extra.  And within a given variation type, there may be multiple templates.  But any such template is constructed by applying one or both of the following operations to $\tau$:  \begin{enumerate} \item Remove one level from the bottom of a local block in $\tau$, \item Insert one new level at the top (or bottom) of a local block in $\tau$. \end{enumerate}   

Therefore, if $\tau'$ is a diminished, augmented, missing, or extra version of $\tau$, then the intermediate and local block partitions of $\tau$ determine intermediate and local block partitions of $\tau'$, as follows:  

\begin{enumerate}
\item Suppose a level, $d$, is removed from the bottom of a local block $B$.  Let $C$ be the intermediate block that contains $B$.  Then, in the intermediate and local block partitions of $\tau'$, replace $B$ and $C$ with $B \setminus \{d\}$ and $C \setminus \{d\}$.  Leave all other intermediate and local blocks alone.

\item Suppose a level, $d$, is inserted at the top (resp., bottom) of a local block $B$.  Let $C$ be the intermediate block that contains $B$.  Then, in the intermediate and local block partitions of $\tau'$, replace $B$ and $C$ with $B \uplus \{d\}$ and $C \uplus \{d\}$, where, in the new order, $d$ is the top (resp., bottom) level in $B \uplus \{d\}$.  Leave all other intermediate and local blocks alone.

\end{enumerate}

\section{The Reordering Maps $\hat p_1$ $\hat p_2$} 

Recall the definition of a reordering map in Section \ref{reorderingmaps}.  We will employ two types of reordering maps: {\em global} and {\em intermediate}.  Roughly speaking, the {\em global} reordering map takes a small number of levels in $\tau$ that occur in positions from the bottom global safe zone in $J$ and moves them, one by one, up to the top global safe zone.  This has the effect of sliding all levels in $\tau$ that do not occur in the global safe zones (those in the ``middle part") down.  Here is the formal definition:

\begin{definition}[Global Reordering] \rm
The {\em global reordering map for $\tau$} is a map $\hat p_1= [J, J, J, J, p_1]$ where
\begin{enumerate}
\item If $a=0$, then $p_1$ and therefore $\hat p_1$ is the identity.
\item If $a\neq 0$ then $p_1$ is defined so that $\hat p_1$ takes the bottom levels in  $\tau(1), \tau(2), \ldots, \tau(a)$ and inserts them, in order, directly after the top levels in $\tau(L-a), \tau(L-a+1), \ldots, \tau(L-1)$.  Observe that this means $\hat p_1$ shifts all levels in $\tau$ that do not occur in the global safe zones down by exactly $a$ positions.
\end{enumerate}
More formally when $a\neq 0$, for each intermediate block $J(m)$ in $J$, let $s_m$ and $\ell_m$ denote the smallest and largest integers in $J(m)$, respectively. Then 
$p_1 = q_a \circ q_{a-1} \circ \cdots \circ q_1$, where, for $1 \leq m \leq a$, \[q_{m}(j) = \begin{cases} \ell_{L-a+m-1} & \mbox{ if }j = s_m - m + 1 \\ j - 1 & \mbox{ if } s_m - m + 1 < j \leq \ell_{L-a+m-1} \\ j & \mbox{ if } j < s_m - m + 1 \mbox{ or } j > \ell_{L-a+m-1}.   \end{cases} \]

%
%
%
%
%

\end{definition}

\begin{proposition} \label{intcutspostp1}
The intermediate cuts in $\hat p_1(\tau)$ that do not occur in the global safe zones occur in positions in $J$ that are congruent to $0$ mod $2^{k_{n+1}}$.
\end{proposition}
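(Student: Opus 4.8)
The plan is to analyze directly where the intermediate cuts of $\tau$ land after applying $\hat p_1$, using the explicit formula for $p_1 = q_a \circ \cdots \circ q_1$. Recall that the intermediate cuts in $\tau$ are precisely the first levels of the intermediate blocks $\tau(m)$, which sit at positions $s_m \in J$ (the smallest integer in $J(m)$). When $a = 0$, the intermediate block partition of $J$ is the simple one from (\ref{simpleintblockpartition}), so each $s_m = m \cdot 2^{k_{n+1}}$, which is already congruent to $0 \bmod 2^{k_{n+1}}$; since $\hat p_1$ is the identity in this case, there is nothing to prove. So the whole argument is about the case $a \neq 0$, where the intermediate blocks are as in (\ref{complicatedintblockpartition}): $\tau(0)$ has length $a$, the blocks $\tau(1), \ldots, \tau(L-1)$ have length $2^{k_{n+1}}$ and start at positions $s_m = a + (m-1)2^{k_{n+1}}$, and $\tau(L)$ is the tail.

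First I would identify which intermediate cuts fall outside the global safe zones. The bottom safe zone contains $J(0)$ and at least $\tau(1)$ (since $2^{k_{n+1}} + 2^{k_{n+1}}\cdot 2^{k_{n+1}} \geq 2^{k_{n+1}} \cdot (1 + 2^{k_{n+1}})$ and using (\ref{knplustwochoice})), and symmetrically the top safe zone swallows the last several intermediate blocks. So the intermediate cuts that survive outside the safe zones are those starting blocks $\tau(m)$ with $m$ in some middle range, all of which are full-height blocks sitting at positions $s_m = a + (m-1)2^{k_{n+1}}$, i.e. $s_m \equiv a \bmod 2^{k_{n+1}}$. Next I would track the action of $p_1$ on such a position. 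Each elementary map $q_j$ removes the bottom level from $\tau(j)$ (a block entirely in the bottom safe zone, as $j \le a$ and $a \le 2^{k_{n+1}}$, hence $\tau(j)$ is within the bottom safe zone) and inserts it high up in the top safe zone, shifting every intermediate position strictly between those two down by exactly $1$. A middle intermediate cut at position $s_m$ lies strictly above all the $\tau(j)$ with $j \le a$ and strictly below the insertion points in the top safe zone, so each of the $a$ maps $q_1, \ldots, q_a$ decreases its position by exactly $1$. Hence its image under $p_1$ is $s_m - a = (a + (m-1)2^{k_{n+1}}) - a = (m-1)2^{k_{n+1}} \equiv 0 \bmod 2^{k_{n+1}}$, which is what we want.

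I should be careful about the bookkeeping for off-by-one shifts: the formula for $q_m$ acts on $j = s_m - m + 1$, reflecting that the preceding $q_1, \ldots, q_{m-1}$ have already pulled the bottom of $\tau(m)$ down by $m-1$. The cleanest way to organize this is to verify by induction on $m$ that after applying $q_1 \circ \cdots \circ q_m$, a middle intermediate cut originally at $s_{m'}$ (with $m'$ large) has moved down by exactly $m$, and that the positions $s_j - j + 1$ targeted by $q_j$ are all in the bottom safe zone while the targets $\ell_{L-a+j-1}$ are all in the top safe zone, so that the ``shift the middle down by one'' description is literally correct for our cut. The one genuine point requiring the hypotheses on $k_{n+1}$ and $k_{n+2}$ is confirming that $a \le 2^{k_{n+1}}$ forces all of $\tau(1), \ldots, \tau(a)$ into the bottom global safe zone, and symmetrically that the $a$ inserted levels land in the top global safe zone — this follows from (\ref{knplustwochoice}) since the bottom safe zone has length $2^{k_{n+1}}(1 + 2^{k_{n+1}})$, which exceeds $a + a\cdot 2^{k_{n+1}} \le 2^{k_{n+1}}(1 + 2^{k_{n+1}})$. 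The main obstacle is purely the careful index arithmetic in composing the $q_m$'s and confirming no middle cut ever coincides with a source or target position of any $q_m$; once that is pinned down, the congruence is immediate.
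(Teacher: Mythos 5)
Your proposal is correct and follows essentially the same route as the paper, which simply asserts the proposition is immediate from the definition of the global reordering map (whose definition already records that $\hat p_1$ shifts all levels outside the global safe zones down by exactly $a$ positions, and the surviving intermediate cuts start at $s_m = a + (m-1)2^{k_{n+1}} \equiv a \bmod 2^{k_{n+1}}$). Your writeup just makes explicit the index bookkeeping for $p_1 = q_a \circ \cdots \circ q_1$ and the safe-zone containments that the paper leaves implicit.
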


This follows immediately from the definitions and implies that the intermediate cuts in $\hat p_1(\tau)$ that do not occur in the global safe zones ``line up" with intermediate cuts in the zero template $\tau^\star = \mathcal P_{k_{n+2}}(0)$.  This in turn implies that the intermediate blocks in $\hat p_1(\tau)$ that do not occur in the global safe zones also line up with intermediate blocks in the zero template.

\subsection{Block Partitions of $\hat p_1(\tau)$} \label{p1hatpartitions}

If $a = 0$, then the intermediate block partition of $\tau$ is given by (\ref{simpleinttau}) and $\hat p_1(\tau) = \tau$.  In this case we define the intermediate block partition of $\hat p_1(\tau)$ to be identical to the intermediate block partition of $\tau$.  Formally  \begin{equation} \label{simpleintblockpart} \hat p_1(\tau) = [\hat p_1(\tau)](0) \cup [\hat p_1(\tau)](1) \cup \cdots \cup [\hat p_1(\tau)](L-1), \end{equation} where each $[\hat p_1(\tau)](m) = \tau(m)$.  In this $a=0$ case we define the the local block partition of $\hat p_1(\tau)$ to be identical to the local block partition of $\tau$ (see (\ref{talltaupartition})).  Formally, for $0 \leq m \leq L-1$, \begin{equation} \label{tallinttaupartition} [\hat p_1(\tau)](m) = [\hat p_1(\tau)](m, 0) \cup [\hat p_1(\tau)](m, 1) \cup \cdots \cup [\hat p_1(\tau)](m, L'-1)  \end{equation} where each $[\hat p_1(\tau)](m, i) = \tau(m, i)$.

Now suppose $a \neq 0$, so that the intermediate block partition of $\tau$ is given by (\ref{complicatedinttau}).   Then $\hat p_1$ takes the bottom levels in  $\tau(1), \tau(2), \ldots, \tau(a)$ and inserts them, in order, directly after the top levels in $\tau(L-a), \tau(L-a+1), \ldots, \tau(L-1)$.  Then we define the intermediate block partition of $\hat p_1(\tau)$ by \begin{equation} \label{complicatedintblockpart} \hat p_1(\tau) = [\hat p_1(\tau)](0) \cup [\hat p_1(\tau)](1) \cup \cdots \cup [\hat p_1(\tau)](L)  \end{equation} where: \begin{itemize} \item For $m \not \in \{1, \ldots, a\} \cup \{L-a, \ldots, L-1\}$, $[\hat p_1(\tau)](m) = \tau(m)$, and  \item For $m \in \{1, \ldots, a\}$, $[\hat p_1(\tau)](m)$ is $\tau(m)$ with its bottom level removed, and $[\hat p_1(\tau)](L-a+m-1)$ is $\tau(L-a+m-1)$ with the bottom level of $\tau(m)$ inserted at the top.  \end{itemize}  

If $a \neq 0$ and $m \not \in \{1, \ldots, a\} \cup \{L-a, \ldots, L-1\}$, then define the local block partition of $[\hat p_1(\tau)](m)$ to be identical to the local block partition of $\tau$.  Formally, if $m \not \in \{0, 1, \ldots, a\} \cup \{L-a, \ldots, L\}$, then the local block partition of $[\hat p_1(\tau)](m)$ is given by (\ref{tallinttaupartition}).  The local block partitions of $[\hat p_1(\tau)](L)$ and $[\hat p_1(\tau)](0)$ are \begin{equation} \label{localpartitionofLth} [\hat p_1(\tau)](L) = [\hat p_1(\tau)](L, 0) \cup \cdots \cup [\hat p_1(\tau)](L, L'-c-2) \cup \left[ [\hat p_1(\tau)](L, L'-c-1)\right]_b  \end{equation} and \begin{equation} \label{localpartitionof0th} [\hat p_1(\tau)](0) = \left[ [\hat p_1(\tau)]\right(0, L'-c-1)]^b \cup [\hat p_1(\tau)](0, L'-c) \cup \cdots \cup [\hat p_1(\tau)](0, L'-1)  \end{equation} where $[\hat p_1(\tau)](L, i) = \tau(L, i)$ for each $i$.  

Finally, if $a \neq 0$ and $m \in \{1, \ldots, a\}$, then define the local block partitions of $[\hat p_1(\tau)](m)$ and $[\hat p_1(\tau)](L-a+m-1)$ by \begin{equation} [\hat p_1(\tau)](m) = [\hat p_1(\tau)](m, 0) \cup [\hat p_1(\tau)](m, 1) \cup \cdots \cup [\hat p_1(\tau)](m, L'-1) \end{equation} and \begin{equation} [\hat p_1(\tau)](L-a+m-1) = [\hat p_1(\tau)](L-a+m-1, 0) \cup \cdots \cup [\hat p_1(\tau)](L-a+m-1, L'-1)   \end{equation} where: \begin{itemize} \item For $i \neq 0$, $[\hat p_1(\tau)](m, i) = \tau(m, i)$, \item For $i \neq L'-1$, $[\hat p_1(\tau)](L-a+m-1, i) = \tau(L-a+m-1, i)$, and \item $[\hat p_1(\tau)](m, 0)$ is $\tau(m, 0)$ with the bottom level removed, and $[\hat p_1(\tau)](L-a+m-1, L'-1)$ is $\tau(L-a+m-1, L'-1)$ with the bottom level of $\tau(m, 0)$ inserted at the top. \end{itemize}

\begin{proposition} \label{twocases}

Let $[\hat p_1(\tau)](m)$ be an intermediate block in $\hat p_1(\tau)$ that does not occur in a global safe zone.  Then the levels in $[\hat p_1(\tau)](m)$ occur in the same positions as those in the intermediate block $\tau^\star(\ell)$ in the zero template where

\[\ell = \begin{cases} m & \mbox{ if } a=0 \\ m-1 & \mbox{ if } a \neq 0. \end{cases}\]  
Therefore if $\tau$ is an odometer template then  $\zeta([\hat p_1(\tau)](m)) = \zeta(\tau^\star(\ell))$. If $\tau$ is a Morse template then
 either
$\zeta([\hat p_1(\tau)](m)) = \zeta(\tau^\star(\ell))$ or $\zeta([\hat p_1(\tau)](m)) = \overline{\zeta(\tau^\star(\ell))}$.


\end{proposition}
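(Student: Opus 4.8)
The plan is to track, position by position, what the global reordering map $\hat p_1$ does to the levels of a basic template $\tau$, and then to invoke the congruence information established in Proposition~\ref{intcutspostp1} together with the canonical-template structure of each system. First I would split into the two cases $a = 0$ and $a \neq 0$. When $a = 0$, the map $\hat p_1$ is the identity and $[\hat p_1(\tau)](m) = \tau(m)$, which by the intermediate block partition (\ref{simpleintblockpartition}) occupies exactly the positions $[m\cdot 2^{k_{n+1}}, \ldots, (m+1)\cdot 2^{k_{n+1}}-1]$; these are precisely the positions occupied by $\tau^\star(m)$ in the zero template, so $\ell = m$ works and there is nothing more to check about position-matching.

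When $a \neq 0$, I would use the explicit description of the block partition of $\hat p_1(\tau)$ from Section~\ref{p1hatpartitions}: for $m \notin \{1,\ldots,a\}\cup\{L-a,\ldots,L-1\}$ (the blocks not touched by $\hat p_1$ and not in a global safe zone), $[\hat p_1(\tau)](m) = \tau(m)$, whose levels—after the downward shift of $a$ positions forced on everything outside the safe zones—occur at positions $[a + (m-1)\cdot 2^{k_{n+1}}, \ldots, a + m\cdot 2^{k_{n+1}}-1]$ by (\ref{complicatedintblockpartition}), together with the observation in the definition of $\hat p_1$ that levels outside the global safe zones shift down by exactly $a$. By Proposition~\ref{intcutspostp1} the first position of such a block is $\equiv 0 \bmod 2^{k_{n+1}}$, so this block of length $2^{k_{n+1}}$ occupies exactly the position interval of some $\tau^\star(\ell)$; counting blocks gives $\ell = m-1$. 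For the blocks $[\hat p_1(\tau)](m)$ with $m \in \{1,\ldots,a\}$ that have been modified (bottom level removed, one level appended at top by the local-block surgery), I would note that the surgery happens inside a local block and the resulting block still has height $2^{k_{n+1}}$ and still starts at a position $\equiv 0 \bmod 2^{k_{n+1}}$ by Proposition~\ref{intcutspostp1}, so the same counting applies.

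Having matched positions, the second half is to identify which \emph{level} of the zero template sits in each position. For the odometer, each position in a canonical template of height $2^{k_{n+2}}$ is occupied by a unique $k_{n+2}$-canonical cylinder determined by the position modulo the relevant power of $2$ (the structure from Section~\ref{canonicalbinary}), and $\zeta$ remembers only the $k_{n+1}$-canonical cylinder, i.e.\ the position modulo $2^{k_{n+1}}$; since $[\hat p_1(\tau)](m)$ and $\tau^\star(\ell)$ occupy the same positions, $\zeta([\hat p_1(\tau)](m)) = \zeta(\tau^\star(\ell))$ follows immediately. For the Morse system the subtlety is that a $k_{n+2}$-canonical cylinder at a given position is either $u_{k_{n+2}}(i)$ or $\overline u_{k_{n+2}}(i)$ depending on whether the relevant length-$2^{k_{n+2}}$ block of the point is $u_{k_{n+2}}$ or $\overline u_{k_{n+2}}$ (Corollary~\ref{partitioncor}); the projection $\zeta$ down to stage $k_{n+1}$ sees either $u_{k_{n+1}}(j)$ or $\overline u_{k_{n+1}}(j)$ for the same $j$ (determined by position mod $2^{k_{n+1}}$), and—because $u_{k_{n+2}} = u_{k_{n+1}}\overline u_{k_{n+1}}$ and $\overline u_{k_{n+2}} = \overline u_{k_{n+1}} u_{k_{n+1}}$—which of the two it is is constant across each length-$2^{k_{n+1}}$ block. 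Hence $\zeta([\hat p_1(\tau)](m))$ agrees with $\zeta(\tau^\star(\ell))$ up to possibly an overall flip, giving the stated dichotomy $\zeta([\hat p_1(\tau)](m)) = \zeta(\tau^\star(\ell))$ or $= \overline{\zeta(\tau^\star(\ell))}$.

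The main obstacle I anticipate is the bookkeeping for the modified blocks $[\hat p_1(\tau)](m)$, $m \in \{1,\ldots,a\}$: one must check carefully that after removing the bottom level of a local block and re-inserting one level at the top of the target block, the block still has height $2^{k_{n+1}}$, still begins at a multiple of $2^{k_{n+1}}$, and that the levels sitting in each position are exactly the zero-template levels for that position (not merely "shifted versions"), so that the flip ambiguity in the Morse case is genuinely a single global flip per block and not something finer. The congruence statement of Proposition~\ref{intcutspostp1} is exactly what rules out anything finer, so the proof should reduce to quoting it and then an essentially routine verification of the two symbolic identities $u_{k_{n+2}} = u_{k_{n+1}}\overline u_{k_{n+1}}$ and $\overline u_{k_{n+2}} = \overline u_{k_{n+1}} u_{k_{n+1}}$ noted at the start of Section~\ref{morsepreliminaries}.
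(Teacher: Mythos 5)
Your argument is correct and follows the same route as the paper: the paper's entire proof is the one-line observation that the claim ``follows from Proposition \ref{intcutspostp1},'' and your write-up is a detailed unpacking of exactly that deduction (position-tracking via the shift by $a$, the mod-$2^{k_{n+1}}$ alignment of intermediate cuts, and the block decomposition of $u_{k_{n+2}}$ into $u_{k_{n+1}}$'s and $\overline u_{k_{n+1}}$'s for the Morse flip dichotomy). The only nitpick is that $u_{k_{n+2}}=u_{k_{n+1}}\overline u_{k_{n+1}}$ holds literally only when $k_{n+2}=k_{n+1}+1$; in general you need the iterated statement that $u_{k_{n+2}}$ is a concatenation of length-$2^{k_{n+1}}$ blocks each equal to $u_{k_{n+1}}$ or $\overline u_{k_{n+1}}$, which is what your argument actually uses.
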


\begin{proof}
Follows from Proposition \ref{intcutspostp1}.

\end{proof}

\subsection{The Intermediate Reordering Map $\hat p_2$}

In this section we define a reordering map $\hat r_m$ for each intermediate block $[\hat p_1(\tau)](m)$ in $\hat p_1(\tau)$.  We then define the {\em intermediate reordering map} to be the concatenation, denoted $\hat p_2$, of the maps $\hat r_m$ (either $\hat p_2 = \hat r_0 \ast \cdots \ast \hat r_{L-1}$ or $\hat p_2 = \hat r_0 \ast \cdots \ast \hat r_L$).  Here is the formal definition.

\begin{definition}[The Intermediate Reordering Map] \label{intermediatereorderingmap} \rm

Depending on whether $a = 0$ or $a \neq 0$, define the {\em intermediate reordering map for $\tau$} to be the concatenation $\hat p_2 = \hat r_0 \ast \cdots \ast \hat r_{L-1}$ or $\hat p_2 = \hat r_0 \ast \cdots \ast \hat r_{L}$, respectively, where each $\hat r_m = [J', J', J', J', r_m]$ is defined as follows.  If $[\hat p_1(\tau)](m)$ is an intermediate block in $\hat p_1(\tau)$ that occurs in a global safe zone, then $r_m = $ identity.  If $[\hat p_1(\tau)](m)$ is an intermediate block in $\hat p_1(\tau)$ that does not occur in a global safe zone then, letting \[x = \begin{cases} m & \mbox{ if } a=0 \\ m-1 & \mbox{ if } a \neq 0, \end{cases}\] Proposition \ref{twocases} determines two cases: 

\begin{enumerate}
\item If $\zeta([\hat p_1(\tau)](m)) = \zeta(\tau^\star(x))$, then $r_m $ is the identity.

\item If $\zeta([\hat p_1(\tau)](m)) = \overline{\zeta(\tau^\star(x))}$, then $r_m$ is defined so that $\hat r_m$ takes the bottom levels in $[\hat p_1(\tau)](m, 1), \ldots, [\hat p_1(\tau)](m, 2^{k_n})$ and inserts them, in order, directly after the top levels in $[\hat p_1(\tau)](m, L' - 2^{k_n}), \ldots, [\hat p_1(\tau)](m, L'-1)$.  This shifts all other local blocks down by exactly $2^{k_n}$ positions.  More formally, for $1 \leq t \leq L'$, let $s_t$ and $\ell_t$ denote the smallest and largest integers in $J'(t)$, respectively.  Then let $r_m = q_{2^{k_n}} \circ q_{2^{k_n}-1} \circ \cdots \circ q_1$ where, for $1 \leq t \leq 2^{k_n}$, \[q_{t}(j) = \begin{cases} \ell_{L'-2^{k_n}+t-1} & \mbox{ if }j = s_t - t + 1 \\ j - 1 & \mbox{ if } s_t - t + 1 < j \leq \ell_{L-2^{k_n}+t-2} \\ j & \mbox{ if } j < s_t - t + 1 \mbox{ or } j > \ell_{L-2^{k_n}+t-2}.   \end{cases} \]

\end{enumerate}

\end{definition}




\subsection{Block Partitions of $\hat \tau = \hat p_2\circ \hat p_1(\tau)$} \label{blockpartitionsoftauhat}

Let $\hat \tau = \hat p_2 \circ \hat p_1(\tau)$ and, depending on whether $a = 0$ or $a \neq 0$, define the intermediate block partition of $\hat \tau$ to be either \[ \hat \tau = \hat \tau(0) \cup \hat \tau(1) \cup \cdots \cup \hat \tau(L-1) \;\; \mbox{ or } \;\; \hat \tau = \hat \tau(0) \cup \hat \tau(1) \cup \cdots \cup \hat \tau(L) \] respectively, where, for each $m$, $\ds \hat \tau(m) = \hat r_m\left( \left[ \hat p_1(\tau) \right](m) \right)$.  

Given an intermediate block $\hat \tau(m)$ in $\hat \tau$, if $r_m = $ identity, then define the local block partition of $\hat \tau(m)$ to be identical to the local block partition of $[\hat p_1(\tau)](m)$.  Denote the local blocks in $\hat \tau(m)$ by $\hat \tau(m, i)$, $[\hat \tau(m, i)]_b$, or $[\hat \tau(m, i)]^b$ depending on the form that $\hat \tau(m) = [\hat p_1(\tau)](m)$ takes (the various possible forms are described in Section \ref{p1hatpartitions}). 

If $r_m \neq $ identity, then $\ds r_m = q_{2^{k_n}}  \circ \cdots \circ q_1$, as in Definition \ref{intermediatereorderingmap}.  In this case, define the local block partition of $\hat \tau(m)$ to be \[\hat \tau(m) = \hat \tau(m, 0) \cup \hat \tau(m, 1) \cup \cdots \cup \hat \tau(m, L'-1)\] where: 

\begin{itemize}
\item $\hat \tau(m, 0)$ consists of the $2^{k_n}$ consecutive levels in $\hat \tau(m)$ that occur in positions from $[0, \ldots, 2^{k_n}-1]$ in $J'$;
\item For $1 \leq i \leq 2^{k_n}$, $\hat \tau(m, i)$ consists of the $2^{k_n}-1$ consecutive levels in $\hat \tau(m)$ that occur in positions from $[i \cdot 2^{k_n}-i+1, \ldots, (i+1)\cdot 2^{k_n} - i - 1]$ in $J'$;
\item For $2^{k_n}+1 \leq i \leq L' - 2^{k_n} - 2$, $\hat \tau(m, i)$ consists of the $2^{k_n}$ consecutive levels in $\hat \tau(m)$ that occur in positions from $[(i - 1) \cdot 2^{k_n}, \ldots, i \cdot 2^{k_n} - 1]$ in $J'$;
\item For $L' - 2^{k_n} - 1 \leq i \leq L' - 2$, $\hat \tau(m, i)$ consists of the $2^{k_n}+1$ consecutive levels in $\hat \tau(m)$ that occur in positions from \[[(i-1)\cdot 2^{k_n} + i - (L' - 2^{k_n}-1), \ldots, i \cdot 2^{k_n} + i - (L' - 2^{k_n}-1)]\] in $J'$; and
\item $\hat \tau(m, L'-1)$ consists of the $2^{k_n}$ consecutive levels in $\hat \tau(m)$ that occur in positions from $[(L'-1)2^{k_n}, \ldots, 2^{k_{n+1}}-1]$ in $J'$.

\end{itemize}

\section{The Good Set} \label{thegoodset}
All the machinery we have defined up to this point will be used to construct the partial interval bijections from Figure~\ref{thefigure}.  The global and intermediate reordering maps in particular are defined to guarantee the existence of \lq\lq good sets".  Recall from the introduction that these are subsets of $X$ and $Y$ defined for each stage of the construction on which all partial interval bijections match.  

Define the {\em good set in $J'$} to be the subset $G' \subset J'$ given by  \[G' = \bigcup_{t = 0}^{L'/2 - 2^{k_n} - 1}J'(2^{k_n} + 2t). \] Then $G'$ consists of every other local block in $J'$ that does not occur in the intermediate safe zones.  For $0 \leq s \leq L-1$, define $\ds G(s) = G' + s \cdot 2^{k_{n+1}},$  and \[G_{n+2} = \bigcup_{s = 2^{k_{n+1}}+1}^{L - (2^{k_{n+1}}2^{k_{n+1}} + 2^{k_{n+1}}) - 1} G(s).\] Note that $G_{n+2} \subset J$; we call $G_{n+2}$ the {\em good set within $J$ at stage $n+2$}.  

For $n$ congruent to $0\mod 4$ ($2\mod 4$) we set $\mathcal G_{n+2} \subset \mathcal Q_{k_{n+2}}$ ($\mathcal H_{n+2}\subset\mathcal P_{k_{n+2}}$) to consist of those levels in $\mathcal Q_{k_{n+2}}$ ($\mathcal P_{k_{n+2}}$) that occur in positions from $G_{n+2}\subset J$.  We call $\mathcal G_{n+2}$ or $\mathcal H_{n+2}$ the {\em good set at stage $n+2$}.     Note that, because the global and intermediate safe zones make up a very small proportion of $J$, $G_{n+2}$ consists of roughly half of $J$.  Therefore $\nu(\mathcal G_{n+2})$ and $\mu(\mathcal H_{n+2})$ are both approximately $\frac12$.


\begin{proposition} \label{everyotherproperty}
Let $\tau_1$ and $\tau_2$ be basic templates in $\mathcal B_{n+2}(X)$ $\left(\mathcal B_{n+2}(Y)\right)$, and let $g \in \mathcal G_{n+2}$ $(g \in \mathcal H_{n+2})$.  Let $c_1 \in \hat \tau_1$ and $c_2 \in \hat \tau_2$ be the levels that occur in position $g$ within $\hat \tau_1$ and $\hat \tau_2$.  Then $\pi \circ \zeta(c_1) = \pi \circ \zeta(c_2)$.  
\end{proposition}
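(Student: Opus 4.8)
The plan is to track the level sitting at position $g$ backwards through the two reordering maps $\hat p_1$ and $\hat p_2$, and to identify it, after applying $\zeta$, with a level of the zero template $\tau^\star$, the identification depending on $\tau_i$ only through one ``flip'' that $\pi$ then erases.

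First I would fix coordinates: write $g = \ell\cdot 2^{k_{n+1}} + j_0\cdot 2^{k_n} + \rho$ with $0\le\rho<2^{k_n}$ and $0\le j_0<L'$. Since $g$ lies in $G_{n+2}$ it is outside both global safe zones, and by the definition of $G'$ the local-block index $j_0$ is even while $j_0+1$ is an ``interior'' index, meaning its local block is neither among the bottom $2^{k_n}$ nor among the top $2^{k_n}$ local blocks. Because $\hat p_2$ is a concatenation of reordering maps, each acting within a single intermediate block, $c_i$ is the level occupying position $g$ inside the intermediate block of $\hat\tau_i$ that lies in positions $J(\ell)$; by Proposition~\ref{intcutspostp1} the intermediate block of $\hat p_1(\tau_i)$ occupying those positions lines up with $\tau^\star(\ell)$, and both this alignment and the index $\ell$ are the same for $i=1,2$ since they depend only on $g$. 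I would also observe that this intermediate block has the full height $2^{k_{n+1}}$ (short intermediate blocks occur only inside the global safe zones), so its intermediate and local block partitions are the generic ones.

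Next I would apply Proposition~\ref{twocases}: for each $i$, $\zeta$ of this intermediate block of $\hat p_1(\tau_i)$ is either $\zeta(\tau^\star(\ell))$ or $\overline{\zeta(\tau^\star(\ell))}$, and in the odometer case only the first possibility can occur. In the first case, Definition~\ref{intermediatereorderingmap}(1) makes the relevant reordering map the identity, so $c_i$ already occupies position $g$ in $\hat p_1(\tau_i)$ and hence $\zeta(c_i)=\zeta\bigl(u_{k_{n+2}}(g)\bigr)$ (resp. $\zeta\bigl(v_{k_{n+2}}(g)\bigr)$), the $\zeta$-image of the level at position $g$ in $\tau^\star$. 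In the second case, which arises only in the Morse setting, Definition~\ref{intermediatereorderingmap}(2) says the reordering map slides every interior local block down by exactly $2^{k_n}$ positions; since $j_0+1$ is interior, the local block now occupying positions $J'(j_0)$ in $\hat\tau_i$ is the one that occupied positions $J'(j_0+1)$ in $\hat p_1(\tau_i)$, so $c_i$ is the level at position $g+2^{k_n}$ in $\hat p_1(\tau_i)$, and Proposition~\ref{twocases} then gives $\zeta(c_i)=\overline{\zeta\bigl(u_{k_{n+2}}(g+2^{k_n})\bigr)}$.

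Finally I would apply $\pi$ and use that $\pi$, like $\zeta$, is a map between canonical templates and so commutes with the flip $\overline{\ \cdot\ }$ of levels. In the odometer case both $c_i$ fall in the first case above, so $\pi\zeta(c_1)=\pi\zeta\bigl(v_{k_{n+2}}(g)\bigr)=\pi\zeta(c_2)$ and we are done. In the Morse case it remains to see that the two possible values coincide, i.e. $\pi\zeta\bigl(u_{k_{n+2}}(g)\bigr)=\overline{\pi\zeta\bigl(u_{k_{n+2}}(g+2^{k_n})\bigr)}$. Iterating $u_{r+1}=u_r\overline u_r$ shows that $\pi\zeta\bigl(u_{k_{n+2}}(p)\bigr)$ equals $u_{k_n}(\rho)$ or $\overline u_{k_n}(\rho)$ according to the parity of the number of $1$'s in the binary expansion of $\lfloor p/2^{k_n}\rfloor$; passing from $g$ to $g+2^{k_n}$ changes $\lfloor p/2^{k_n}\rfloor$ from $\ell L' + j_0$ to $\ell L' + j_0 + 1$, and because $j_0$ is even this toggles that parity, which is exactly the flip required. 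This is precisely the reason $G'$ was defined to consist of every other local block starting from an even index. The step I expect to be the real obstacle is not any single computation but the careful verification that, for $g$ in the good set, every intermediate and local block touched by $\hat p_1$ and $\hat p_2$ is generic (full height, interior, outside all safe zones) so that Definition~\ref{intermediatereorderingmap} and Propositions~\ref{intcutspostp1}--\ref{twocases} apply verbatim; once that is in place, the remainder is routine bookkeeping together with the one-line parity identity.
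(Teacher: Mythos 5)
Your proof is correct and takes essentially the same route as the paper's: use $\hat p_1$ and Proposition \ref{twocases} to align intermediate blocks with the zero template outside the safe zones, handle the flipped Morse case via the $2^{k_n}$-shift performed by $\hat p_2$, and finish with the fact that the shifted flip of the Morse block sequence agrees with the original on every other local block. Your parity-of-binary-digits identity is simply a more explicit rendering of the paper's closing observation about the Morse sequence and its flip, and your bookkeeping that good-set positions lie in full-height, interior blocks fills in details the paper leaves implicit.
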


\begin{proof}
The global reordering map was defined so that, after global reordering, the intermediate block structures of all templates line up outside of the global safe zone (see Proposition \ref{twocases}).  For the odometer system, then, after global reordering, every local block that is not in the global safe zones matches the corresponding local block in the zero template, in the sense that it has the same image under $\pi \circ \zeta$.

For the Morse system, the intermediate reordering map was defined so that, after both global and intermediate reordering, every other local block that is neither in the global nor intermediate safe zones moved down by exactly one complete local block.  As a consequence of the combinatoric structure of the Morse system, now every other such local block matches the corresponding local block in the zero template.  To see why this is the case, consider the Morse sequence and its flip: \begin{eqnarray*} \mbox{Morse sequence} & = & .\fbox{0}1\fbox{1}0\fbox{1}0\fbox{0}1\fbox{1}0\fbox{0}1\fbox{0}1\fbox{1}0 \cdots \\ \mbox{Flip} & = & .1\fbox{0}0\fbox{1}0\fbox{1}1\fbox{0}0\fbox{1}1\fbox{0}1\fbox{0}0\fbox{1} \cdots  \end{eqnarray*}  Notice that, if we shift the flip to the left by one coordinate, then it matches the Morse sequence in every other coordinate.  The same would be true for sequences of substitution blocks (just replace $0$'s and $1$'s with blocks $\theta^n(0)$ and $\theta^n(1)$).  
\end{proof}

\begin{proposition} \label{independenceofgoodsets}
Let $n < m$ be natural numbers congruent to $0 \mod 4$ $(2\mod 4)$.  Then $\mathcal G_{n}$ and $\mathcal G_{m}$ $(\mathcal H_n$ and $\mathcal H_m)$ are independent events in $Y$ $(X)$.
\end{proposition}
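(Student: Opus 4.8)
The plan is to compute all three of $\nu(\mathcal G_n)$, $\nu(\mathcal G_m)$ and $\nu(\mathcal G_n\cap\mathcal G_m)$ (respectively, the $\mu$'s of the $\mathcal H$'s) and to check that the last equals the product of the first two, which is exactly independence. Since $n$ and $m$ lie in the same congruence class mod $4$, both $\mathcal G_n,\mathcal G_m$ are good sets in the odometer $Y$ (resp.\ both $\mathcal H_n,\mathcal H_m$ are good sets in the Morse system $X$), and $n<m$ then forces $m\ge n+4$. I would carry out the odometer case in full and then note that the Morse case requires only a cosmetic change.

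First I would record the shape of the good sets. By definition $\mathcal G_r$ (for $r=n$ or $m$) is the union of the levels $v_{k_r}(i)\in\mathcal Q_{k_r}$ over those $i$ lying in a position set $G_r\subseteq[0,2^{k_r})$, and since the $2^{k_r}$ levels of $\mathcal Q_{k_r}$ have equal measure, $\nu(\mathcal G_r)=|G_r|/2^{k_r}$. Because $m\ge n+4$ the scales satisfy $k_n<k_{m-2}<k_{m-1}<k_m$, so in particular $2^{k_n}\mid 2^{k_m}$. Next I would invoke the odometer structure (Section~\ref{canonicalbinary}): the vertical map $\pi\colon\mathcal Q_{k_m}\to\mathcal Q_{k_n}$ sends $v_{k_m}(j)$ to $v_{k_n}(j\bmod 2^{k_n})$, so the stage-$k_m$ level $v_{k_m}(j)$ is contained in $\mathcal G_n$ exactly when $(j\bmod 2^{k_n})\in G_n$. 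Refining $\mathcal G_m$ down to stage $k_m$ as well, this reduces the proposition to the lattice-point identity
\[
\bigl|\{\, j\in G_m:(j\bmod 2^{k_n})\in G_n \,\}\bigr|=\frac{|G_n|\,|G_m|}{2^{k_n}},
\]
since dividing by $2^{k_m}$ turns it into $\nu(\mathcal G_n\cap\mathcal G_m)=\nu(\mathcal G_n)\,\nu(\mathcal G_m)$.

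To prove that identity I would show that $G_m$ meets every residue class mod $2^{k_n}$ in the same number of points, necessarily $|G_m|/2^{k_n}$; summing over $r\in G_n$ then gives the claim. This is where the construction of Section~\ref{thegoodset} does the work. Recall that $\mathcal G_m$ is assembled in the passage from stage $m-2$ to stage $m$, so in the notation there the relevant scales are $k_{m-2}<k_{m-1}<k_m$. The set $G_m$ is a disjoint union of translates $G(s)=G'+s\cdot 2^{k_{m-1}}$, and $G'$ is a union of the \emph{full} local blocks $J'(i)=[\,i\cdot 2^{k_{m-2}},(i+1)2^{k_{m-2}}-1\,]$; hence $G_m$ is a disjoint union of dyadic intervals, each of length $2^{k_{m-2}}$ and each starting at a multiple of $2^{k_{m-2}}$. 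Since $k_n<k_{m-2}$, each of these intervals starts at a multiple of $2^{k_n}$ and therefore contributes exactly $2^{k_{m-2}-k_n}$ integers to every residue class mod $2^{k_n}$; summing over the $|G_m|/2^{k_{m-2}}$ intervals gives the desired equidistribution, and the lattice-point identity---hence independence---follows.

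For the Morse case I would replace each level $v_{k_r}(i)$ by the clopen set $u_{k_r}(i)\cup\overline u_{k_r}(i)=\{x:t_{k_r}(x)=i\}$, noting that these $2^{k_r}$ sets partition $X$ into pieces of equal measure, and replace ``$\pi$ reduces the position mod $2^{k_n}$'' by the identity $t_{k_n}(x)=t_{k_m}(x)\bmod 2^{k_n}$; this last holds because, by Corollary~\ref{partitioncor} together with the relation $u_{k_m}=\sigma^{k_n}(u_{k_m-k_n})$, the length-$2^{k_m}$ interval partition of $\mathbb Z$ attached to $x$ refines the length-$2^{k_n}$ one. The remaining two steps then carry over verbatim. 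The one place that requires care is this initial reduction from measures to lattice-point counts, i.e.\ correctly describing how the coarse good set at stage $n$ sits inside the fine tower at stage $m$ (the action of the $\pi$'s, and the relation $t_{k_n}\equiv t_{k_m}$ in the Morse case); once that is pinned down, the equidistribution of $G_m$ is an immediate consequence of its being built from full dyadic blocks whose common width $2^{k_{m-2}}$ is a multiple of $2^{k_n}$, and the rest is arithmetic.
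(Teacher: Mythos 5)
Your proposal is correct and follows essentially the same route as the paper: both arguments rest on the observation that $\mathcal G_m$ is a union of complete local blocks of width $2^{k_{m-2}}$, a multiple of $2^{k_n}$, so that $\mathcal G_m$ decomposes into full ``periods'' of the stage-$k_n$ tower and $\mathcal G_n$ has the same relative density inside $\mathcal G_m$ as in all of $Y$. The paper phrases this via conditional measures on consecutive copies $\mathcal L_i$ of $\mathcal Q_{k_n}$ rather than your residue-class count, but the computation is the same, and your extra detail on the Morse case (via $t_{k_n}=t_{k_m}\bmod 2^{k_n}$) fills in what the paper dismisses as ``similar.''
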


\begin{proof}We only give the proof in the case where $n$ and $m$ are congruent to $0\mod 4$.  The other case follows similarly.  
Since $n < m$, we have $2^{k_m} = p \cdot 2^{k_n}$, where $p = 2^{k_m - k_n}$.  Then $\mathcal Q_{k_m}$ is partitioned into $p$ ``copies" of $\mathcal Q_{k_n}$:  \begin{equation} \label{qmpartition} \mathcal Q_{k_m} = \mathcal L_1 \cup \mathcal L_2 \cup \cdots \cup \mathcal L_p, \end{equation} where each $\mathcal L_i$ is a union of exactly $2^{k_n}$ consecutive levels in $\mathcal Q_{k_m}$, and $\pi(\mathcal L_i) =  \mathcal Q_{k_n}$ for each $i$.  Because $\mathcal G_m$ is a union of local blocks from stage $m - 2$ and $n < m - 2$, we can write $\mathcal G_m$ as a disjoint union \begin{equation} \label{gmdisjoint} \mathcal G_m = \bigcup_{i \in I} \mathcal L_i\end{equation} for some $I \subset \{1, \ldots, p\}$.  Because (\ref{qmpartition}) is obtained by cutting and stacking $\mathcal Q_{k_n}$ $p$ times, we have $\nu(\mathcal G | \mathcal L_i) = \nu(\mathcal G_n)$, and therefore $\nu(\mathcal G_n \cap \mathcal L_i) = \nu(\mathcal L_i) \nu(\mathcal G_n)$, for each $i$.  Since (\ref{gmdisjoint}) is a disjoint union,

\begin{equation*} \mu(\mathcal G_n | \mathcal G_m)  =  \frac{\nu(\mathcal G_n \cap \mathcal G_m)}{\nu(\mathcal G_m)}  =  \frac{1}{\nu(\mathcal G_m)} \cdot \sum_{i \in I} \nu(\mathcal G_n \cap \mathcal L_i) = \frac{1}{\nu(\mathcal G_m)} \cdot \sum_{i \in I} \nu(\mathcal L_i) \nu(\mathcal G_n) = \nu(\mathcal G_n). \end{equation*} 
Therefore  $\mathcal G_m$ and  $\mathcal G_n$
are independent.

\end{proof}

\section{Stage 4 of the Induction} 

\subsection{Heads and Tails}\label{headsandtails}
Suppose $\tau \in \mathcal B_{k_4}(X)$ and $b \neq 0$.   Then the local block partitions of $\hat \tau(0)$ and $\hat \tau(L)$ are \[ \hat \tau(0) = [\hat \tau(0, L'-c-1)]^b \cup \hat \tau(0, L'-c) \cup \cdots \hat \tau(0, L'-1)\]         and \[\hat \tau(L) = \hat \tau(L, 0) \cup \cdots \hat \tau(L, L'-c-2) \cup [\hat \tau(L, L'-c-1)]_b,\] as described Section \ref{blockpartitionsoftauhat}, and in (\ref{localpartitionof0th}) and (\ref{localpartitionofLth}).              Notice that the very first local block in $\hat \tau$ has height $b \neq 2^{k_2} $, i.e., it is a {\em partial} block.  Correspondingly, the bottom $b$ levels in $\mathcal Q_{k_4}$ can be thought of as the top $b$ levels in a collection of templates $\omega \in \Omega_{k_2}$.  Define \[\tau_{tail} = \{\omega \in \Omega_{k_2} \; : \; \mbox{the top $b$ levels in $\omega$ are the bottom $b$ levels in $\mathcal Q_{k_4}$}\}.\]  Similarly, define \[\tau_{head} = \{\omega \in \Omega_{k_2} \; : \; \mbox{the bottom $2^{k_2} - b$ levels in $\omega$ are top $2^{k_2} - b$ levels in $\mathcal Q_{k_4}$}\}.\]  If $b = 0$, then define $\ds \tau_{tail} = \tau_{head} = \emptyset$.

\subsection{Maps to Basic Templates}

Given $\tau \in \mathcal B_{k_4}(X)$, in this section we define a collection of partial interval bijections of the form \begin{equation} \label{basicPIBform} \phi_{\tau} = \phi_{\tau}^{tail} \ast \phi_{\tau}^{body} \ast \phi_{\tau}^{head}.\end{equation} There will be one such partial interval bijection for each pair $(\omega_1, \omega_2) \in \tau_{tail} \times \tau_{head}$.  We call $\phi_{\tau}^{tail}$ and $\phi_{\tau}^{head}$ the {\em bottom} and {\em top sticky notes} of $\phi_\tau$.  If $b = 0$, then each $\phi_\tau$ is defined on an interval of length $2^{k_4}$.  However, if $b \neq 0$, then each $\phi_\tau$ is defined on an interval of length approximately $2^{k_4} + 2^{k_2}$.  

\subsubsection{Sticky Notes $\phi_\tau^{tail}$ and $\phi_\tau^{head}$} \label{stage4stickynotes}

If $b = 0$, then define just one top sticky note and just one bottom sticky note, namely, the trivial partial interval bijection $[I, J, A, B, f]$ where $I = J = \emptyset$.         

If $b \neq 0$, then, for each $\omega \in \tau_{tail}$, define $\phi_\tau^{tail} = \phi_\omega^{-1}$.  To conserve notation, since the particular choice of $\omega$ will not matter for our construction, the expression $\phi_\tau^{tail}$ does not indicate dependence on $\omega$.                                                                                                                                                                                                                                                                                                                                         

Similarly, for each $\omega \in \tau_{head}$, define $\phi_\tau^{head} = \phi_\omega^{-1}$.

\subsubsection{The Body Map $\phi_\tau^{body}$} \label{stage4bodymap}

The local block partition of $\hat \tau$ determines a partition of $J = [0, 1, \ldots, 2^{k_4}-1]$ into subintervals, which in turn determines a partition of $\mathcal Q_{k_4}$, which we call the {\em local block partition of $\mathcal Q_{k_4}$}.  The local block partition of $\mathcal Q_{k_4}$ has the form \[ \begin{cases} \omega_1 \cup \omega_2 \cup \cdots \cup \omega_{LL'} & \mbox{if }b=0 \\ \omega_1 \cup \omega_2 \cup \cdots \cup  \omega_{LL'+1} & \mbox{if } b \neq 0,  \end{cases}\] where the $\omega_i$ are consecutively occurring sets of levels in $\mathcal Q_{k_4}$.  If $b = 0$, then, for each $i$, $\pi(\omega_i)$ is a template from $\Omega_{k_2} \cup \widetilde \Omega_{k_2}$.  In this case, define \[\widetilde{\widetilde{\phi_\tau^{body}}} = \phi_{\omega_1}^{-1} \ast \phi_{\omega_2}^{-1} \ast \cdots \ast \phi_{\omega_{LL'}}^{-1}.\] If $b \neq 0$, then for $2 \leq i \leq LL'$, $\pi(\omega_i)$ is a template from $\Omega_{k_2} \cup \widetilde \Omega_{k_2}$.  In this case, define \[\widetilde{\widetilde{\phi_\tau^{body}}} = \phi_{\omega_2}^{-1} \ast \phi_{\omega_3}^{-1} \ast \cdots \ast \phi_{\omega_{LL'}}^{-1}.\] 

\begin{lemma} \label{stage4matching}
Let $c \in \mathcal G_4$ be a level in $\mathcal Q_{k_4}$ from the good set at stage $4$.  Then, given $\tau_1$ and $\tau_2$ in $\mathcal B_{k_4}(X)$, either both $\ds \widetilde{\widetilde{\phi_{\tau_1}^{body}}}$ and $\ds \widetilde{\widetilde{\phi_{\tau_2}^{body}}}$ are undefined on $c$, or else $\ds \pi \circ \zeta \circ \widetilde{\widetilde{\phi_{\tau_1}^{body}}}(c) = \pi \circ \zeta \circ \widetilde{\widetilde{\phi_{\tau_2}^{body}}}(c)$.
\end{lemma}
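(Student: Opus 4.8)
The plan is to evaluate $\widetilde{\widetilde{\phi_\tau^{body}}}$ explicitly on a good-set level, reduce the question to a single stage-$2$ partial interval bijection, and then quote Proposition~\ref{everyotherproperty}.

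Write $c=v_{k_4}(g)$ with $g\in G_4$. By construction $\widetilde{\widetilde{\phi_\tau^{body}}}$ is the simple concatenation of the maps $\phi_{\pi(\omega_i)}^{-1}$ over the local blocks $\omega_i$ of $\mathcal Q_{k_4}$ cut out by the local block partition of $\hat\tau$, so it suffices to understand the factor attached to the single block $\omega_{i_0}$ containing position $g$ and how it acts on $c$. This is where the design of the good set is used: by Propositions~\ref{intcutspostp1} and~\ref{twocases}, together with the choice of $G'$ (the surviving ``every other'' local blocks, once $\hat p_2$ has absorbed the flip in the flipped intermediate blocks), the local block of $\hat\tau$ containing position $g$ occupies the standard positions $[\beta\cdot 2^{k_2},(\beta+1)\cdot 2^{k_2}-1]$ with $\beta=\lfloor g/2^{k_2}\rfloor$ --- i.e.\ it lines up with, and has the same image under $\pi\circ\zeta$ as, the local block of the zero template in position $g$ --- and this holds for every basic $\tau\in\mathcal B_{k_4}(X)$. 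Consequently $\omega_{i_0}$ occupies these same standard positions (so $c$ sits in position $g\bmod 2^{k_2}$ within $\omega_{i_0}$), $\pi(\omega_{i_0})=\mathcal Q_{k_2}(0)$ independently of $\tau$, and the factor of $\widetilde{\widetilde{\phi_\tau^{body}}}$ attached to $\omega_{i_0}$ is $\phi_{\mathcal Q_{k_2}(0)}^{-1}$, where $\phi_{\mathcal Q_{k_2}(0)}\colon\mathcal P_{k_2}(j)\to\mathcal Q_{k_2}(0)$ is the stage-$2$ map $\phi_{\omega_1}$ of Section~\ref{stage2} and $j\in\{0,1\}$ is the zero-template type of that local block of $\hat\tau$.

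Now I would assemble the pieces. From the stage-$2$ formulas $\phi_{\omega_1}$ has $A=\{2\}$ and $B=\{2\}$, so $\phi_{\mathcal Q_{k_2}(0)}^{-1}$ is defined only on the level $v_{k_2}(2)$ and sends it to the level in position $2$ of $\mathcal P_{k_2}(j)$. Since $\pi(c)=v_{k_2}(g\bmod 2^{k_2})$ and $c$ occurs in position $g\bmod 2^{k_2}$ of $\omega_{i_0}$, the map $\widetilde{\widetilde{\phi_\tau^{body}}}$ is defined on $c$ exactly when $g\equiv 2\bmod 2^{k_2}$ --- a condition not involving $\tau$ --- and in that case $\widetilde{\widetilde{\phi_\tau^{body}}}(c)$ is the level in position $2$ of $\mathcal P_{k_2}(j)$, which (a routine check against the previous paragraph) is $\pi\circ\zeta(c_\tau)$, where $c_\tau$ is the level of $\hat\tau$ in position $g$. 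Proposition~\ref{everyotherproperty} applied to $\tau_1$ and $\tau_2$ gives $\pi\circ\zeta(c_{\tau_1})=\pi\circ\zeta(c_{\tau_2})$; in particular the index $j$ is the same for $\tau_1$ and $\tau_2$, since the two stage-$k_2$ zero templates $\mathcal P_{k_2}(0)$ and $\mathcal P_{k_2}(1)$ differ in every position. Hence $\widetilde{\widetilde{\phi_{\tau_1}^{body}}}(c)=\widetilde{\widetilde{\phi_{\tau_2}^{body}}}(c)$ whenever both are defined, and applying $\pi\circ\zeta$ gives the claimed identity; and when $g\not\equiv 2\bmod 2^{k_2}$ both maps are undefined on $c$.

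The step that carries the real weight is locating, uniformly in $\tau$, the good-set local block of $\hat\tau$ at a fixed position $g$ among the standard zero-template local blocks --- exactly the point where the interaction of the global reordering $\hat p_1$, the intermediate reordering $\hat p_2$, and the ``every other block'' combinatorics of the Morse sequence comes in, just as in the proof of Proposition~\ref{everyotherproperty}. The remainder is bookkeeping with the explicit stage-$2$ maps of Section~\ref{stage2}, together with Propositions~\ref{nobottomortop} and~\ref{nobottomtoporcut}, which keep good-set levels out of the sticky notes and out of the partial bottom block that is dropped from the body when $b\neq 0$.
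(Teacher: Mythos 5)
Your argument is correct and takes essentially the same route as the paper, whose entire proof of this lemma is ``By construction, together with Proposition~\ref{everyotherproperty}''; you have simply unwound the definition of the body map, identified the good-set factor as $\phi_{\mathcal Q_{k_2}(0)}^{-1}$ with domain $\{v_{k_2}(2)\}$, and observed that both the definedness condition $g\equiv 2 \bmod 2^{k_2}$ and the $\pi\circ\zeta$-image are independent of $\tau$. The one small imprecision is the assertion $\widetilde{\widetilde{\phi_{\tau_1}^{body}}}(c)=\widetilde{\widetilde{\phi_{\tau_2}^{body}}}(c)$: these are levels of the distinct templates $\hat\tau_1$ and $\hat\tau_2$ (different $k_4$-cylinders in general), and they agree only after applying $\pi\circ\zeta$ --- which is exactly what the lemma asserts and what Proposition~\ref{everyotherproperty} delivers.
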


\begin{proof}
By construction, together with Proposition \ref{everyotherproperty}.
\end{proof}

It follows from Lemma \ref{stage4matching} that it is possible to extend the domain of definition of $\ds \widetilde{\widetilde{\phi_\tau^{body}}}$ to a partial interval bijection, call it $\ds \widetilde{\phi_\tau^{body}}$, between the set of all levels in $\mathcal G_4$ and those levels in $\hat \tau$ that occur in positions from $G_4$.  Moreover, the same bijection can be used for all reordered templates $\hat \tau$ so that, given $\tau_1$ and $\tau_2$ in $\mathcal B_{k_4}(X)$ and $c \in\mathcal G_4$, $\ds \pi \circ \zeta \circ \widetilde{\phi_{\tau_1}^{body}}(c) = \pi \circ \zeta \circ \widetilde{\phi_{\tau_2}^{body}}(c)$.

Finally, define \[\phi_\tau^{body} = \begin{cases} (p_2 \circ p_1)^{-1} \circ \widetilde{\phi_\tau^{body}} & \mbox{ if }b = 0 \\ (\overline{p_2 \circ p_1})^{-1} \circ \widetilde{\phi_\tau^{body}} & \mbox{ if } b \neq 0, \end{cases}\] where $\overline{p_2 \circ p_1}$ denotes the restriction of $p_2 \circ p_1$ to the subinterval of $\overline J \subset J$ corresponding to $\omega_2 \cup \cdots \cup \omega_{LL'}$. (Note that $\overline{p_2 \circ p_1}$ is well-defined because $p_2 \circ p_1$ is the identity outside of $\overline J$.) 

\begin{proposition} \label{stage4nobottomortop}
Neither the bottom level nor the top level of $\mathcal Q_{k_4}$ is in the domain of any $\phi_\tau$.
\end{proposition}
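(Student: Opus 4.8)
The plan is to use the decomposition $\phi_\tau=\phi_\tau^{tail}\ast\phi_\tau^{body}\ast\phi_\tau^{head}$ of (\ref{basicPIBform}), locate the bottom level $v_{k_4}(0)$ and top level $v_{k_4}(2^{k_4}-1)$ of $\mathcal Q_{k_4}$ inside the source template of $\phi_\tau$, and show that each lands outside the domain of whichever of the three pieces contains it.

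The case $b=0$ is immediate: here $\tau_{tail}=\tau_{head}=\emptyset$, the sticky notes are trivial, and $\phi_\tau=\phi_\tau^{body}=(p_2\circ p_1)^{-1}\circ\widetilde{\phi_\tau^{body}}$. Composing on the left with a reordering map leaves the domain of a partial interval bijection unchanged, so $\phi_\tau$ has the same domain as $\widetilde{\phi_\tau^{body}}$, which by construction is exactly the set of levels of $\mathcal Q_{k_4}$ occurring in positions from $G_4\subset J$. An inspection of the index ranges in the definition of $G_{n+2}$ in Section~\ref{thegoodset} shows that $G_4$ is disjoint from both the bottom and top global safe zones of $J$; in particular $0\notin G_4$ and $2^{k_4}-1\notin G_4$, so $v_{k_4}(0)$ and $v_{k_4}(2^{k_4}-1)$ are not in the domain.

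For $b\neq 0$ I would first observe that $v_{k_4}(0)$ is the lowest level of the first (partial, height-$b$) local block $\omega_1$ of $\mathcal Q_{k_4}$ and that $v_{k_4}(2^{k_4}-1)$ is the highest level of the last (partial, height-$(2^{k_2}-b)$) local block $\omega_{LL'+1}$; since the body map is built only from $\omega_2,\dots,\omega_{LL'}$, neither level is in the domain of $\phi_\tau^{body}$, and it remains to examine the sticky notes. The bottom level $v_{k_4}(0)$ occurs in the source of $\phi_\tau$ only inside $\omega_{tail}$, at the position where the $k_2$-cylinder $\pi(v_{k_4}(0))=v_{k_2}(0)$ lies. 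Now $v_{k_2}(0)$ is the global cut of the odometer at stage $k_2$; a diminished template has $v_{k_2}(0)$ deleted and so cannot lie in $\tau_{tail}$, while for a basic or an augmented template the (single) range level of $\phi_\omega$ is never a copy of $v_{k_2}(0)$, by Proposition~\ref{nobottomtoporcut} together with the matching definitions of Section~\ref{stage2}. Hence $v_{k_4}(0)$ is not in the range of $\phi_{\omega_{tail}}$, i.e.\ not in the domain of $\phi_\tau^{tail}=\phi_{\omega_{tail}}^{-1}$.

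The analogous statement for the top level is the step I expect to be the main obstacle, since it does not reduce to the global-cut observation: $v_{k_4}(2^{k_4}-1)$ occurs in the source of $\phi_\tau$ only inside $\omega_{head}$, in position $2^{k_2}-b-1$, and one must verify that for every $b\in\{1,2,3\}$ and every $\omega\in\tau_{head}\subseteq\Omega_{k_2}$ the single range level of $\phi_{\omega_{head}}$ does not occupy that position. This is a finite check, carried out by tracking through the explicit stage-two maps of Section~\ref{stage2}, and the matching rules for $\mathcal D_{k_2}(Y)$ and $\mathcal A_{k_2}(Y)$, exactly which level of each $\omega$ lies in the range of $\phi_\omega$ and where it ends up after the modification. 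It is pure bookkeeping, but getting the cases right is where the work lies; in particular one may need to check that the configurations which would place the top of $\mathcal Q_{k_4}$ in the range simply do not arise among the templates occurring in $\tau_{head}$.
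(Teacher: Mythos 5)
Your treatment of the $b=0$ case and of the bottom level when $b\neq 0$ is correct, and the latter is exactly the paper's argument: the bottom level of $\mathcal Q_{k_4}$ sits over the global cut $v_{k_2}(0)$ of any $\omega\in\tau_{tail}$, and Proposition \ref{nobottomtoporcut} keeps the global cut out of the range of $\phi_\omega$, hence out of the domain of $\phi_\tau^{tail}=\phi_\omega^{-1}$. (The paper's proof consists of only this; it says nothing at all about the top level, so you have correctly isolated the point that is actually delicate.)

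The gap is that you never carry out the ``finite check'' for the top level, and it is not the formality you hope it is. Work it out: for $b\neq 0$ the basic template in $\tau_{head}$ is the $\omega\in\mathcal B_{k_2}(Y)$ beginning with $v_{k_2}(b)$, and the top level of $\mathcal Q_{k_4}$ occupies position $2^{k_2}-1-b$ in it, namely the position of $v_{k_2}(2^{k_2}-1)$. For $b=2$ this template is $\omega_3: v_{k_2}(2)\prec v_{k_2}(3)\prec v_{k_2}(0)\prec v_{k_2}(1)$ and the position is $1$ --- which is precisely $B_3=\{1\}$, the range of $\phi_{\omega_3}$ as defined in Section \ref{stage2}. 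So for this admissible choice of head the top level of $\mathcal Q_{k_4}$ \emph{does} land in the domain of $\phi_\tau^{head}=\phi_{\omega_3}^{-1}$: the configurations you hope ``simply do not arise'' do arise. Note moreover that position $1$ is forced for $\omega_3$ once one excludes its bottom, its top, and its global cut (which sits at position $2$), so the conflict cannot be dodged by a different choice of $B_3$. To complete a proof of the proposition as stated you would need either to restrict which $\omega\in\tau_{head}$ are admissible as top sticky notes, or to handle the top level by some mechanism other than Proposition \ref{nobottomtoporcut}; as written, your argument (like the paper's one-line proof) does not establish the claim about the top level.
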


\begin{proof}
Given $\phi_\tau$ of the form $\ds \phi_\tau = \phi_\tau^{tail} \ast \phi_\tau^{body} \ast \phi_\tau^{head}$ determined by $(\omega_1, \omega_2) \in \tau_{tail} \times \tau_{head}$, the bottom level in $\mathcal Q_{k_4}$ occurs as the cut in $\omega_1$.  But by proposition \ref{nobottomtoporcut}, this level is not in the range of the stage 2 map $\phi_{\omega_1}$.  Therefore it is not in the domain of $\phi_{\tau}^{tail}$.  
\end{proof}

\begin{proposition} \label{stage4nocut}
Given $\tau \in \mathcal B_{k_4}(X)$, the global cut in $\tau$ is not in the range of any $\phi_\tau$.
\end{proposition}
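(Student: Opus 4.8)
The statement to prove is Proposition~\ref{stage4nocut}: the global cut in $\tau$ is not in the range of any $\phi_\tau$.

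\medskip

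The plan is to trace where the global cut of $\tau$ sits after the reordering maps are applied, and then check that none of the three pieces $\phi_\tau^{tail}$, $\phi_\tau^{body}$, $\phi_\tau^{head}$ of the decomposition (\ref{basicPIBform}) hits it. First I would recall that the global cut of $\tau$ is, by definition (Section~\ref{morsebasictemplates}), the unique level equal to $u_{k_4}(0)$ or $\overline u_{k_4}(0)$, and that it occurs at position $g = g(\tau)$ in $J$. Because $g \equiv b \bmod 2^{k_2}$ and $g \equiv a \bmod 2^{k_{n+1}}$, the global cut lies in the local block of $\tau$ indexed so that it is the $b$-th level of that block (when $b\neq 0$), or the bottom level of a local block (when $b = 0$). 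The key point is that the reordering maps $\hat p_1$ and $\hat p_2$ move whole local blocks around but do not change the internal order of a local block, so after forming $\hat\tau = \hat p_2\circ\hat p_1(\tau)$, the global cut still sits at a position that is the bottom level of some local block of $\hat\tau$ (if $b=0$) or is governed by the same residue $b$ within its local block (if $b\neq0$), i.e.\ it is in the bottom part that corresponds to $\tau_{tail}$, or it is the global cut of the odometer template $\pi(\omega_i)$ containing it.

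\medskip

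Next I would argue piece by piece, mirroring the proof of Proposition~\ref{stage4nobottomortop}. For the body map: $\widetilde{\widetilde{\phi_\tau^{body}}}$ is built as a simple concatenation of the inverses $\phi_{\omega_i}^{-1}$ of the stage-$2$ maps, where $\pi(\omega_i)$ ranges over templates in $\Omega_{k_2}\cup\widetilde\Omega_{k_2}$. The level in $\hat\tau$ occupying the position of the global cut lies inside some local block $\omega_i$, and under $\pi$ that local block corresponds to a template whose global cut is exactly this level. By Proposition~\ref{nobottomtoporcut}, the global cut of an odometer basic template at stage $k_2$ is never in the range of $\phi_\omega$, hence never in the domain of $\phi_\omega^{-1}$; the missing/extra/diminished/augmented variants are defined to match $\phi_\omega$ or $\phi_{\omega_p}$ (Section~\ref{stage2}), so the same conclusion holds for them. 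Therefore the global cut is not in the domain of $\widetilde{\widetilde{\phi_\tau^{body}}}$, and since $\widetilde{\phi_\tau^{body}}$ only extends the domain by levels in $\mathcal G_4$ (which by Proposition~\ref{everyotherproperty} and the construction of $\mathcal G_4$ avoids the local block containing the global cut), nor of $\widetilde{\phi_\tau^{body}}$. Precomposing with $(p_2\circ p_1)^{-1}$ or $(\overline{p_2\circ p_1})^{-1}$ is just the reordering bookkeeping that identifies $\hat\tau$ with $\tau$, so it does not introduce the global cut into the range. For the sticky notes: when $b=0$ the sticky notes are trivial and there is nothing to check; when $b\neq0$, $\phi_\tau^{tail}=\phi_\omega^{-1}$ for $\omega\in\tau_{tail}$, and the global cut — being at residue $b$ inside its block — is not among the top $b$ levels of any $\omega\in\tau_{tail}$ that get identified with the bottom $b$ levels of $\mathcal Q_{k_4}$; more to the point the global cut of $\tau$ is not the bottom level of $\mathcal Q_{k_4}$, so it is not touched by the tail map at all. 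The head map $\phi_\tau^{head}=\phi_\omega^{-1}$ acts on the top $2^{k_2}-b$ levels region, again a region not containing the global cut of $\tau$ once we account for where $g$ was moved; alternatively, as in the body argument, within $\pi(\omega)$ the candidate level is the global cut and Proposition~\ref{nobottomtoporcut} applies.

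\medskip

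I expect the main obstacle to be bookkeeping rather than conceptual: precisely identifying, after the two reordering maps, which local block $\omega_i$ in the local block partition of $\mathcal Q_{k_4}$ contains the image position of the global cut of $\tau$, and confirming that under $\pi$ that local block $\pi(\omega_i)$ is a (basic, diminished, augmented, missing, or extra) odometer template whose \emph{global cut} — not some other level — is exactly the level in question; only then does Proposition~\ref{nobottomtoporcut} (and the matching definitions in Section~\ref{stage2}) finish the job. The cleanest write-up is probably to reduce everything to the single observation that the global cut of $\tau$ always ends up, in $\hat\tau$, as the global cut of the odometer template $\pi(\omega_i)$ that contains it, and then quote Proposition~\ref{nobottomtoporcut} for the body and head and the "not the bottom level of $\mathcal Q_{k_4}$" observation for the tail, exactly paralleling the proof of Proposition~\ref{stage4nobottomortop}.
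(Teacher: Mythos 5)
Your argument has the two sides of the map interchanged, and as a result it essentially proves the wrong proposition. At stage $4$ the maps $\phi_\tau$ go from the odometer to the Morse system: their domain consists of levels of $\mathcal Q_{k_4}$ and their range consists of levels of $\tau$. The global cut of $\tau$ is a \emph{Morse} level ($u_{k_4}(0)$ or $\overline u_{k_4}(0)$), so the claim concerns the \emph{range} of $\phi_\tau$. But the fact you invoke, Proposition~\ref{nobottomtoporcut}, says that the global cut of an odometer template $\omega\in\mathcal B_{k_2}(Y)$ is not in the range of the stage-$2$ map $\phi_\omega$, i.e.\ not in the domain of $\phi_\omega^{-1}$ --- and indeed your conclusion from it is that ``the global cut is not in the \emph{domain} of $\widetilde{\widetilde{\phi_\tau^{body}}}$,'' after which you switch to ``range'' without justification. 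What you have reproduced is the content (and the proof) of the companion Proposition~\ref{stage4nobottomortop}, which you explicitly say you are paralleling; that proposition is about the domain and does not address the range. Relatedly, the assertion that the global cut of $\tau$ ``is the global cut of the odometer template $\pi(\omega_i)$ containing it'' is a category error (a level of $X$ cannot be a level of $Y$), and even positionally it fails when $b\neq 0$: the global cut $v_{k_2}(0)$ of $\pi(\omega_i)$ sits at a position congruent to $0 \bmod 2^{k_2}$ in $J$, whereas $g\equiv b$.

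The correct reduction --- and the one the paper makes --- is to Proposition~\ref{nobottomortop}: the bottom and top levels of each canonical template $\mathcal P_{k_2}(i)$ are not in the \emph{domain} of any stage-$2$ map $\phi_\omega$, hence not in the \emph{range} of any $\phi_\omega^{-1}$, and therefore not in the range of any concatenation of such inverses. The relevant positional fact is that the local blocks of $\tau$ begin at positions congruent to $g\bmod 2^{k_2}$, so the global cut is the \emph{bottom} level of its local block; tracking it through $\hat p_1$ and $\hat p_2$, it either remains the bottom level of a local block aligned with $J'(0)$ (a block excluded from $G_4$, so the extension to $\widetilde{\phi_\tau^{body}}$ does not pick it up) or is moved to the top of a block in a safe zone. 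Either way it occupies a bottom or top position of the stage-$2$ canonical or modified template through which the corresponding $\phi_{\omega_i}^{-1}$ acts, and Proposition~\ref{nobottomortop} excludes it from the range of every piece of $\phi_\tau$. A secondary inaccuracy: $\hat p_1$ and $\hat p_2$ do not ``move whole local blocks around'' while preserving their internal order; they peel individual bottom levels off some blocks and attach them to the tops of others, which is exactly why blocks of heights $2^{k_2}\pm 1$ appear. That point is minor, but the domain/range switch is a genuine gap.
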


\begin{proof}
Follows from Proposition \ref{nobottomortop}.
\end{proof}

\begin{proposition}  \label{stage4matchingprop}
Let $c \in \mathcal G_4$.  Then, given $\tau_1$ and $\tau_2$ in $\mathcal B_{k_4}(X)$ and partial interval bijections $\ds \phi_{\tau_1}$ and $\ds \phi_{\tau_2}$ of the form (\ref{basicPIBform}), both $\ds \phi_{\tau_1}$ and $\ds \phi_{\tau_2}$ are defined on $c$, and $\ds \pi \circ \zeta \circ \phi_{\tau_1}(c) = \pi \circ \zeta \circ \phi_{\tau_2}(c)$.

\end{proposition}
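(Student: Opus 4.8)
The plan is to unwind the construction of $\phi_\tau$ from Sections~\ref{stage4stickynotes}--\ref{stage4bodymap} and reduce the assertion to Proposition~\ref{everyotherproperty}; beyond that reduction, the only real work is checking that $c$ never meets the sticky-note parts of $\phi_\tau$ and that composing with the reordering map does not disturb the identification needed to invoke that proposition.

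First I would verify that $\phi_{\tau_1}$ and $\phi_{\tau_2}$ are defined on $c$. Since $c\in\mathcal G_4$, it occupies a position from $G_4\subset J$, which by the definition of $G_4$ in Section~\ref{thegoodset} lies strictly between the bottom and top global safe zones. In the decomposition $\phi_\tau=\phi_\tau^{tail}\ast\phi_\tau^{body}\ast\phi_\tau^{head}$ of (\ref{basicPIBform}), the parts of the domain handled by $\phi_\tau^{tail}$ and $\phi_\tau^{head}$ come from the partial local blocks $\omega_1$ and $\omega_{LL'+1}$ at the very bottom and top of $\mathcal Q_{k_4}$, which lie deep inside the global safe zones (and when $b=0$ the sticky notes are trivial); hence $c$ is handled by $\phi_\tau^{body}$, so that $\phi_\tau(c)=\phi_\tau^{body}(c)$. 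Now $\phi_\tau^{body}$ is obtained from $\widetilde{\phi_\tau^{body}}$ by composing with the reordering map $(p_2\circ p_1)^{-1}$ (or with $(\overline{p_2\circ p_1})^{-1}$ when $b\neq0$, in which case $c\notin\omega_1$, so $c$ lies in the relevant subinterval), and composing with a reordering map does not shrink the domain; since $\widetilde{\phi_\tau^{body}}$ is defined on all of $\mathcal G_4$ by the discussion following Lemma~\ref{stage4matching}, so is $\phi_\tau^{body}$, and therefore $\phi_\tau$. This holds for both $\tau_1$ and $\tau_2$.

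Next I would identify $\phi_{\tau_i}(c)$ as a subset of $X$. Put $g=\widetilde{\phi_{\tau_1}^{body}}(c)=\widetilde{\phi_{\tau_2}^{body}}(c)$; this is a position from $G_4$ that does not depend on the choice of basic template, because the same position bijection $\widetilde{\phi_\tau^{body}}$ is used for every reordered template $\hat\tau$. For each $i$, composing with $(p_2\circ p_1)^{-1}$ relabels the range position $g$ of $\hat\tau_i$ as the position $(p_2\circ p_1)^{-1}(g)$ of $\tau_i$, and since $\hat\tau_i=\hat p_2\circ\hat p_1(\tau_i)$, the definition of reordering maps in Section~\ref{reorderingmaps} shows that the level at position $(p_2\circ p_1)^{-1}(g)$ of $\tau_i$ is the same subset of $X$ as the level $c_i\in\hat\tau_i$ at position $g$. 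Thus $\phi_{\tau_i}(c)=c_i$ as subsets of $X$, and since $\zeta$ and $\pi$ are the natural inclusion maps of the diagram---hence determined by the underlying subset of $X$---we get $\pi\circ\zeta\circ\phi_{\tau_i}(c)=\pi\circ\zeta(c_i)$.

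Finally, with $g\in G_4$ fixed and $\tau_1,\tau_2\in\mathcal B_{k_4}(X)$, Proposition~\ref{everyotherproperty} gives $\pi\circ\zeta(c_1)=\pi\circ\zeta(c_2)$, which is exactly $\pi\circ\zeta\circ\phi_{\tau_1}(c)=\pi\circ\zeta\circ\phi_{\tau_2}(c)$. I expect the delicate point to be the penultimate paragraph: confirming that composing with the reordering map genuinely produces the level of $\hat\tau_i$ at the template-independent position $g$, so that Proposition~\ref{everyotherproperty}---which is stated for the reordered templates $\hat\tau$---applies directly; the $b\neq0$ boundary case (checking that $c$ avoids $\omega_1$ and that $\overline{p_2\circ p_1}$ still acts on $c$) is a minor additional point.
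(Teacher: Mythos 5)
Your proposal is correct and follows essentially the same route as the paper: the paper's proof is the one-line remark that the claim ``follows from Lemma \ref{stage4matching} together with the definition of the partial interval bijections $\phi_\tau$,'' and your argument simply unpacks that, namely that $c\in\mathcal G_4$ avoids the sticky notes, that the extended body map $\widetilde{\phi_\tau^{body}}$ is a template-independent position bijection on $\mathcal G_4$, that composing with $(p_2\circ p_1)^{-1}$ does not change the image level as a subset of $X$, and that Proposition \ref{everyotherproperty} then gives the matching under $\pi\circ\zeta$. No gaps.
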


\begin{proof}
Follows from Lemma \ref{stage4matching} together with the definition of the partial interval bijections $\phi_\tau$.

\end{proof}

\subsection{Partial Interval Bijections}  We now define maps to diminished, augmented, missing, and extra templates, and establish analogues of Lemmas \ref{stage2diminishedmatching} and \ref{stage2augmentedmatching}, which are needed to establish analogues of Propositions \ref{possiblebodyoverlaps} and \ref{bodyoverlapsvalid} in stage 8.

\subsubsection{Maps to Diminished Templates}
Given $\tau \in \mathcal B_{k_4}(X)$ that is neither the zero-template nor the one-template, define $\phi_{\tau^-(u)}$ to match with $\phi_\tau$, and define $\phi_{\tau^-(d)}$ to match with $\phi_{\tau_p}$.  Here we mean that the top and bottom sticky notes as well as the body maps all match.  Similar to stage 2, the fact that these definitions are possible follows from Proposition \ref{stage4nocut}.  

If $\tau \in \mathcal B_{k_4}(X)$ is either the zero-template or the one-template, then for $i \in \{0, 1\}$, define $\phi_{\tau^-(u, i)}$ to match with $\phi_\tau$, and define $\phi_{\tau^-(d, i)}$ to match with $\phi_{\tau_{p(i)}}$.

\begin{lemma} \label{stage4dimlemma}
If $\tau \in \mathcal B_{k_4}(X)$ is neither the zero-template nor the one-template, then $\phi_{\tau^-(d)}$ matches with $\phi_{\tau_p^-(u)}$.  If $\tau \in \mathcal B_{k_4}(X)$ is either the zero-template or the one-template, then for $i \in \{0, 1\}$, $\phi_{\tau^-(d, i)}$ matches with $\phi_{\tau_p^-(i)(u)}$.
\end{lemma}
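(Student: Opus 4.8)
The plan is to mimic exactly the structure of the proof of Lemma \ref{stage2diminishedmatching}, where the analogous stage-2 statement followed from the fact that both maps in question were \emph{defined} to match a common third map. Here the maps $\phi_{\tau^-(d)}$ and $\phi_{\tau_p^-(u)}$ have (by the definitions just given) been declared to match $\phi_{\tau_p}$ and $\phi_{(\tau_p)_p}$ respectively, so unlike stage 2 they are not \emph{a priori} defined to match the same map. So the first step is to unwind the definitions: $\phi_{\tau^-(d)}$ matches $\phi_{\tau_p}$, while $\phi_{\tau_p^-(u)}$ matches $\phi_{\tau_p}$ as well --- wait, one must be careful about which predecessor is used. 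Let me restate: $\phi_{\tau^-(u)}$ is defined to match $\phi_\tau$, and $\phi_{\tau^-(d)}$ is defined to match $\phi_{\tau_p}$. Applying the ``$u$'' rule to the template $\tau_p$ in place of $\tau$, we get that $\phi_{\tau_p^-(u)}$ matches $\phi_{\tau_p}$. Hence both $\phi_{\tau^-(d)}$ and $\phi_{\tau_p^-(u)}$ are defined to match the single map $\phi_{\tau_p}$, and matching is transitive, so they match each other. This handles the generic case in one line, just as in Lemma \ref{stage2diminishedmatching}.

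For the zero-template and one-template case, the same idea applies but with the bifurcated predecessors $\tau_{p(i)}$. Here $\phi_{\tau^-(d,i)}$ is defined to match $\phi_{\tau_{p(i)}}$, and --- applying the generic ``$u$'' rule to the basic template $\tau_{p(i)}$ --- the map $\phi_{\tau_{p(i)}^-(u)}$ is defined to match $\phi_{\tau_{p(i)}}$. (One needs to know that $\tau_{p(i)}$ is a basic template that is neither the zero- nor the one-template, so that the generic ``$u$'' definition applies to it rather than the split one; this is immediate from the definitions in Section \ref{tksection}, since $\tau_{p(i)}$ has its global cut strictly above the bottom level.) So again both maps match $\phi_{\tau_{p(i)}}$, hence they match each other; this is exactly the claimed statement $\phi_{\tau^-(d,i)}$ matches $\phi_{\tau_{p(i)}^-(u)}$.

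The one place where care is genuinely required --- the ``main obstacle'', such as it is --- is checking that the declared matchings are in fact \emph{consistent}, i.e.\ that the stage-4 maps $\phi_{\tau^-(d)}$ and $\phi_{\tau^-(u)}$ can simultaneously be defined to match the stated targets without conflict. This is where Proposition \ref{stage4nocut} enters: since the global cut in $\tau$ is not in the range of any $\phi_\tau$, and since a diminished template $\tau^-(d)$ (resp.\ $\tau^-(u)$) is obtained from $\tau$ by deleting the global cut and adding a level at the bottom (resp.\ top), the ``box'' positions --- the levels in the ranges of the body and sticky-note maps --- sit in levels that survive the modification and are unaffected by it. So the matching prescription is well-defined, exactly as argued (via Proposition \ref{nobottomtoporcut} / Proposition \ref{stage4nocut}) in the text preceding the lemma. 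Once that well-definedness is granted, transitivity of matching does all the remaining work, and the proof is a two-sentence argument parallel to the proofs of Lemmas \ref{stage2diminishedmatching} and \ref{stage2augmentedmatching}.

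\begin{proof}
Suppose first that $\tau \in \mathcal B_{k_4}(X)$ is neither the zero-template nor the one-template. Then $\tau_p$ is a basic template whose global cut is strictly above its bottom level, so the generic ``$u$'' rule applies to $\tau_p$, and by definition $\phi_{\tau_p^-(u)}$ is defined to match $\phi_{\tau_p}$. By definition $\phi_{\tau^-(d)}$ is also defined to match $\phi_{\tau_p}$. Since matching (Definition \ref{match}) is transitive --- it simply asks that the corresponding levels in the ranges coincide --- it follows that $\phi_{\tau^-(d)}$ matches $\phi_{\tau_p^-(u)}$.

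Now suppose $\tau$ is the zero-template or the one-template, and fix $i \in \{0, 1\}$. The template $\tau_{p(i)}$ is a basic template whose global cut is strictly above its bottom level, so again the generic ``$u$'' rule applies and $\phi_{\tau_{p(i)}^-(u)}$ is defined to match $\phi_{\tau_{p(i)}}$. By definition $\phi_{\tau^-(d, i)}$ is also defined to match $\phi_{\tau_{p(i)}}$. Transitivity of matching then gives that $\phi_{\tau^-(d, i)}$ matches $\phi_{\tau_{p(i)}^-(u)}$.
\end{proof}
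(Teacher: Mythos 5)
Your proof is correct and is essentially the paper's own argument: the paper's proof is simply ``By construction,'' and what you have written out --- that $\phi_{\tau^-(d)}$ and $\phi_{\tau_p^-(u)}$ (resp.\ $\phi_{\tau^-(d,i)}$ and $\phi_{\tau_{p(i)}^-(u)}$) are both \emph{defined} to match the common map $\phi_{\tau_p}$ (resp.\ $\phi_{\tau_{p(i)}}$), exactly as in Lemma \ref{stage2diminishedmatching} --- is precisely that construction made explicit. Your remarks on well-definedness via Proposition \ref{stage4nocut} and on why the generic ``$u$'' rule applies to $\tau_p$ and $\tau_{p(i)}$ are accurate and harmless additions.
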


\begin{proof}
By construction.
\end{proof}

\subsubsection{Maps to Augmented Templates} 

Given $\tau \in \mathcal B_{k_4}(X)$ and $i \in \{0, 1\}$, define $\phi_{\tau^+(d, i)}$ to match with $\phi_\tau$.  If $\tau$ is neither the zero-template nor the one-template, then define $\phi_{\tau^+(u, i)}$ to match with $\phi_{\tau_p}$.  If $\tau$ is the zero-template or the one-template, then define $\phi_{\tau^+(u, i)}$ to match with $\phi_{\tau_{p(i)}}$.

\begin{lemma} \label{stage4auglemma}
Given $i \in \{0, 1\}$, and $\tau \in \mathcal B_{k_4}(X)$, if $\tau$ is neither the zero-template nor the one-template, then $\phi_{\tau^+(u, i)}$ matches with $\phi_{\tau_p^+(d, i)}$.  If $\tau$ is the zero-template or the one-template, then $\phi_{\tau^+(u, i)}$ matches with $\phi_{\tau^+_{p(i)}(d, i)}$.
\end{lemma}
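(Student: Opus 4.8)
The plan is to prove Lemma \ref{stage4auglemma} the same way its stage-2 ancestor (Lemma \ref{stage2augmentedmatching}) was proved: by unwinding the definitions so that both sides are revealed to match the \emph{same} reference map. Recall that at stage $4$ each augmented variation $\tau^+(\cdot,\cdot)$ is not given its own formula; it is \emph{defined} to match a specified basic-template map. So the content of the lemma is purely a bookkeeping identity about which reference map each side was attached to, together with the one geometric fact that makes those reference maps coincide.

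First I would treat the generic case where $\tau$ is neither the zero-template nor the one-template. By the definitions immediately preceding the lemma, $\phi_{\tau^+(u,i)}$ is defined to match $\phi_{\tau_p}$, and $\phi_{\tau_p^+(d,i)}$ is defined to match $\phi_{(\tau_p)}$ as well (since $\phi_{\sigma^+(d,i)}$ is always declared to match $\phi_\sigma$, applied with $\sigma=\tau_p$). Hence both $\phi_{\tau^+(u,i)}$ and $\phi_{\tau_p^+(d,i)}$ match $\phi_{\tau_p}$. Since ``matches'' here means that the bottom sticky note, the body map, and the top sticky note all agree as set maps (in the sense of Definition \ref{match}, applied levelwise), and matching a common map is plainly an equivalence-type relation in each component, we conclude $\phi_{\tau^+(u,i)}$ matches $\phi_{\tau_p^+(d,i)}$. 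The only thing to verify is that the matching condition is genuinely well-posed for augmented templates of the relevant forms, i.e.\ that the global cut (the inserted extra copy of $u_k(0)$ or $\overline u_k(0)$) and the deleted bottom/top levels do not interfere with the range of the reference map; this is exactly Proposition \ref{stage4nocut} (the global cut of $\tau$ is not in the range of any $\phi_\tau$), which is why these ``match'' definitions were legal in the first place, and I would cite it rather than re-derive it.

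For the special case where $\tau$ is the zero-template or the one-template, the argument is the same but indexed by $i$: $\phi_{\tau^+(u,i)}$ is defined to match $\phi_{\tau_{p(i)}}$, while $\phi_{\tau^+_{p(i)}(d,i)}$ is defined to match $\phi_{(\tau_{p(i)})}$ (again using that $\phi_{\sigma^+(d,i)}$ matches $\phi_\sigma$, now with $\sigma = \tau_{p(i)}$). So both sides match the common reference map $\phi_{\tau_{p(i)}}$, and therefore match each other. Here one should note that $\tau_{p(i)}$ is a well-defined basic template (the zero/one-template has two predecessors $\tau_{p(0)},\tau_{p(1)}$, distinguished by which level, $u_k(2^k-1)$ or $\overline u_k(2^k-1)$, is tacked on at the bottom), and that the sticky-note decomposition $\phi_{\tau_{p(i)}} = \phi^{tail}\ast\phi^{body}\ast\phi^{head}$ from Section \ref{stage4bodymap} carries over verbatim, so the three-component matching statement makes sense.

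The only real subtlety---and what I expect to be the main obstacle---is a notational/consistency check rather than a substantive difficulty: one must confirm that the symbol $\tau^+_{p(i)}(d,i)$ really does denote ``apply the predecessor-$i$ operation, then the $+(d,i)$ augmentation'' and that this composite template is itself one of the templates in $\mathcal A_{k_4}(X)$ whose defining map we have specified, so that ``$\phi_{\tau^+_{p(i)}(d,i)}$ matches $\phi_{\tau_{p(i)}}$'' is literally one of the definitions in the preceding paragraph and not something needing separate justification. Once that parsing is pinned down, the proof collapses to the single sentence ``both sides are defined to match $\phi_{\tau_{p(i)}}$ (resp.\ $\phi_{\tau_p}$),'' which is why the authors will almost certainly just write \emph{``By construction.''} I would write it out at the one-line level of detail above, flagging the Proposition \ref{stage4nocut} well-posedness point, since that is the analogue of how Lemmas \ref{stage2diminishedmatching} and \ref{stage2augmentedmatching} were handled.
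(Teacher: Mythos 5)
Your proposal is correct and is essentially the paper's own argument: the paper proves this lemma with ``By construction,'' which unpacks exactly as you say --- $\phi_{\tau^+(u,i)}$ and $\phi_{\tau_p^+(d,i)}$ (resp.\ $\phi_{\tau^+_{p(i)}(d,i)}$) are both \emph{defined} to match the common reference map $\phi_{\tau_p}$ (resp.\ $\phi_{\tau_{p(i)}}$), mirroring the one-line proof of the stage-2 analogue, Lemma \ref{stage2augmentedmatching}. Your added remarks on well-posedness via Proposition \ref{stage4nocut} and on parsing $\tau^+_{p(i)}(d,i)$ are sensible care but do not change the route.
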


\begin{proof}
By construction.
\end{proof}

\subsubsection{Maps to Missing and Extra Templates} \label{stage4missingextra}

Given $\tau^m \in \mathcal M_{k_4}(Y)$, define $\tau^m_{head} = \tau_{head}$.  Note that, if $\omega \in \tau^m_{head}$, then $\omega^-(u) \in \tau^m_{head}$.  Define \[\tau^m_{tail} = \{\omega \in \Omega_{k_2} \; : \; \mbox{the top $b-1$ levels in $\omega$ are the bottom $b-1$ levels in $\mathcal Q_{k_4}^m$}\}.\] Note that $\tau_{tail} \subset \tau^m_{tail}$, but $\tau_{tail} \neq \tau^m_{tail}$ since, in particular, if $\omega \in \tau_{tail}$, then $\omega^-(d) \in \tau^m_{tail}$, but $\omega^-(d) \not \in \tau_{tail}$.

Define \[\phi_{\tau^m} = \phi_{\tau^m}^{tail} \ast \phi_{\tau^m}^{body} \ast \phi_{\tau^m}^{head}\] as follows.  For each $\omega \in \tau_{head}^m$, define $\phi_{\tau^m}^{head} = \phi_\omega^{-1}$, and, for each $\omega \in \tau_{tail}^m$, define $\phi_{\tau^m}^{tail} = \phi_\omega^{-1}$.  Define $\phi_{\tau^m}^{body} = \phi_\tau^{body}$.

Similarly, given $\tau \in \mathcal B_{k_4}(X)$ that is neither the zero-template nor the one-template, define $\tau_{tail}^e = \tau_{tail}$.  Note that, if $\omega \in \tau_{tail}^e$, then $\omega^+(d) \in \tau_{tail}^e$.  Define \[\tau_{head}^e = \{\omega \in \Omega_{k_2} \; : \; \mbox{the bottom $2^{k_2}-b+1$ levels in $\omega$ are the top $2^{k_2}-b+1$ levels in $\mathcal Q_{k_4}^e$}\}.\] Note that, if $\omega \in \tau_{head}$, then $\omega^+(u) \in \tau_{head}^e$.

\section{Stage 6 of the induction}

\subsection{Reordering Maps, Block Partitions, the Good Set, and Heads and Tails}\label{stage6bigpicture}
Stage 6 is analogous to stage 4, but with one new layer of complexity:  Whereas the maps in stage 4 were defined as {\em concatenations} of maps from stage 2, the maps in stage 6 will be {\em overlapping concatenations} of the maps from stage 4.  The top and bottom sticky notes defined in stage 4 are used to glue these overlapping concatenations together.  We need to verify that there are enough sticky notes defined in stage 4 to choose from so that these overlapping concatenations are well-defined.

The global reordering map in stage 6, denoted $\hat p_1$, is the same as it was in stage 4 except, of course, now $J = [0, 1, \ldots, 2^{k_6}-1]$.  In stage 4, an intermediate reordering map was also used to ensure that every other local block that does not occur in a safe zone has the same image under $\pi \circ \zeta$ as the corresponding local block in the zero-template (see Proposition \ref{everyotherproperty}).  But an intermediate reordering map is not needed in stage 6 because the odometer has just one canonical tower at each stage---not two.  So in stage 6, let the intermediate reordering map $\hat p_2$ be simply the identity.  Given $\omega \in \mathcal B_{k_6}(Y)$, let $\hat \omega = \hat p_2 \circ \hat p_1(\omega)$.

Define the intermediate and local block partitions of $\hat p_1(\omega)$, as well as the intermediate block partition of $\hat \omega$, in exactly the same way that they were defined in stage 4 (see Sections \ref{p1hatpartitions} and \ref{blockpartitionsoftauhat}) except, of course, that $\tau$ is replaced with $\omega$.  Given an intermediate block $\hat \omega(m)$ in $\hat \omega$, because the intermediate reordering map $\hat p_2$ is simply the identity, just like in stage 4, define the local block partition of $\hat \omega(m)$ to be identical to the local block partition of $[\hat p_1(\omega)](m)$.

The good set within $J$ at stage $6$, $G_6$, is defined in Section \ref{thegoodset}.  Let $\mathcal G_6$, called the {\em good set at stage $6$}, consist of those levels in $\mathcal P_{k_6}$ that occur in positions from $G_6$.  Note that Proposition \ref{everyotherproperty} holds in stage 6.

Given $\omega \in \mathcal B_{k_6}(Y)$, the sets $\omega_{tail}$ and $\omega_{head}$ are defined analogously to the definitions in Section \ref{headsandtails} (just replace $\tau$, $\omega$, $k_2$, and $k_4$ with $\omega$, $\tau$, $k_4$, and $k_6$, respectively).  However, there is an added layer of complexity in stage 6:  If $\tau \in \omega_{head}$ (or $\tau \in \omega_{tail}$), then $\tau$ has head and tail sets of its own, $\tau_{head}$ and $\tau_{tail}$.

\subsection{Maps to Basic Templates}

Much like in stage 6, given $\omega \in \mathcal B_{k_6}(Y)$, the partial interval bijections $\phi_\omega$ take the form \begin{equation} \label{stage6basicPIBform} \phi_\omega = \phi_\omega^{tail} \tilde \ast \phi_\omega^{body} \tilde \ast \phi_\omega^{head},\end{equation} where $\ds \tilde \ast$ represents {\em overlapping concatenation} as defined in Definition \ref{overlapconcatenation1}.  There is one such partial interval bijection for each pair $(\tau_1, \tau_2) \in \omega_{tail} \times \omega_{head}$.  The maps $\phi_\omega^{tail}$, $\phi_\omega^{body}$, and $\phi_\omega^{head}$ are defined analogously to the definitions in Sections \ref{stage4stickynotes} and \ref{stage4bodymap} except that within the body map $\phi_\omega^{body}$ (Section \ref{stage4bodymap}), concatenations ($\ast$) are replaced with overlapping concatenations ($\tilde \ast$).  We now show that the overlapping concatenations within $\phi_\omega^{body}$ can be glued together.  Suppose $b \neq 0$ (the case that $b = 0$ is nearly identical).  Then, following what was done in Section \ref{stage4bodymap}, we define $ \ds \widetilde{\widetilde{\phi_\omega^{body}}}$ to take the form \begin{equation} \label{phiomegabodydecomp} \widetilde{\widetilde{\phi_\omega^{body}}} = \phi_{\tau_2}^{-1} \tilde \ast \phi_{\tau_3}^{-1} \tilde \ast \cdots \tilde \ast \phi_{\tau_{LL'}}^{-1}.\end{equation}  

\begin{proposition} \label{possiblebodyoverlaps}
With the templates $\tau_i$ from (\ref{phiomegabodydecomp}) listed in order of overlapping concatenation ($\tau_2 \prec \tau_3 \prec \cdots \prec \tau_{LL'}$), the following six types of successive pairs can occur:
\begin{enumerate}
\item $\tau \prec \tau'$ where $\tau, \tau' \in \{\mathcal P_{k_4}(0), \mathcal P_{k_4}(1)\}$,
\item $\tau \prec \tau$ where $\tau \in \mathcal B_{k_4}(X) \setminus \{\mathcal P_{k_4}(0), \mathcal P_{k_4}(1)\}$,
\item $\tau_{s(i)}^m \prec \tau$, where $i \in \{0, 1\}$ and $\tau = \mathcal P_{k_4}(i)$,  
\item $\tau_s^m \prec \tau^m$, where $\tau \in \mathcal B_{k_4}(X) \setminus \{\mathcal P_{k_4}(0), \mathcal P_{k_4}(1)\}$,
\item $\tau \prec \tau_{s(j)}^e$, where $i, j \in \{0, 1\}$ and $\tau = \mathcal P_{k_4}(i)$, and
\item $\tau^e \prec \tau_s^e$, where $\tau \in \mathcal B_{k_4}(X) \setminus \{\mathcal P_{k_4}(0), \mathcal P_{k_4}(1)\}$.
\end{enumerate}
\end{proposition}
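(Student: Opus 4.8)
The plan is to identify each $\tau_i$ explicitly from the local block partition of $\hat\omega$ and then enumerate the consecutive pairs. Since $\hat p_2$ is the identity at stage $6$, we have $\hat\omega = \hat p_1(\omega)$, so the only rearrangement of $\omega$ we must account for is the global one.

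I would begin by recording, as in Section~\ref{stage4bodymap}, that the local block partition of $\hat\omega$ determines a partition of $J = [0,\ldots,2^{k_6}-1]$ and hence of the canonical Morse tower $\mathcal P_{k_6}$ into consecutively occurring blocks, and that $\tau_i \in \mathcal T_{k_4}\cup\widetilde{\mathcal T}_{k_4}$ is the image under $\pi\colon\mathcal P_{k_6}\to\mathcal P_{k_4}$ of the $i$-th such block. Writing $u_{k_6}$ as the concatenation of length-$2^{k_4}$ subwords $w_0w_1\cdots$, each $u_{k_4}$ or $\overline u_{k_4}$ and ordered by the Morse pattern, $\tau_i$ depends only on the subinterval $[p_i,\ldots,p_i+h_i-1]\subset J$ that the $i$-th local block of $\hat\omega$ occupies: with $q=\lfloor p_i/2^{k_4}\rfloor$ and $r=p_i\bmod 2^{k_4}$, applying $\pi$ gives the template read off from the last $2^{k_4}-r$ levels of $w_q$ followed by the first levels of $w_{q+1}$, adjusted by $h_i-2^{k_4}$.

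Next I would handle the bulk. By Propositions~\ref{intcutspostp1} and~\ref{twocases}, outside the two global safe zones every intermediate block of $\hat\omega$ sits at a position $\equiv0\bmod 2^{k_5}$ and, $\hat p_2$ being trivial, its local blocks are the standard ones, of height $2^{k_4}$ and beginning at positions $\equiv 0\bmod 2^{k_4}$. Each such block is therefore an entire subword $w_q$, so $\tau_i\in\{\mathcal P_{k_4}(0),\mathcal P_{k_4}(1)\}$ and a consecutive pair of bulk blocks is of type~(1). All other local blocks lie in the two global safe zones or, when $a\neq 0$, in the partial intermediate blocks $\tau(0),\tau(L)$, whose partial end-pieces go into the head and tail sticky notes $\phi_\omega^{head},\phi_\omega^{tail}$. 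In the global safe zones, $\hat p_1$ has deleted one level from the bottom of each of $\tau(1),\ldots,\tau(a)$ and inserted one at the top of each of $\tau(L-a),\ldots,\tau(L-1)$; deleting the bottom level of a local block drops its height to $2^{k_4}-1$ and shifts the remaining local blocks of that intermediate block by one subword-position, while inserting a level at the top produces a height-$(2^{k_4}+1)$ block and shifts the remaining ones the other way. Comparing the resulting ordered level-sequences with the definitions of $\tau^m$, $\tau^e$, $\tau_s$, and $\tau_{s(i)}$ from Section~\ref{tksection}, and using Proposition~\ref{twocases} to track whether each subword is $u_{k_4}$ or $\overline u_{k_4}$, one verifies that a pair straddling a deleted level realizes type~(3) (when the neighbour is a zero- or one-template) or type~(4) (otherwise), a pair straddling an inserted level realizes type~(5) or~(6), and the remaining consecutive pairs among the shifted full-height blocks are of type~(2).

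The easy part is the bulk and the mechanical matching of subwords against the template definitions; the hard part is exhaustiveness at the junctions. One must verify that the first and last modified local blocks in each safe zone, and the transitions into and out of $\tau(0)$ and $\tau(L)$, realize exactly the ``missing/extra version of a successor'' configurations in the statement and nothing else---in particular that the single-level shift introduced by $\hat p_1$ meets a subword boundary in precisely the way required, so that, e.g., a deleted level always turns $\mathcal P_{k_4}(i)$ into $\tau_{s(i)}^m$ and never into some other template. This is exactly where the Morse combinatoric identity invoked in the proof of Proposition~\ref{everyotherproperty} (shifting the flipped subword sequence by one re-aligns it with the unflipped one) is needed; once it is in hand the enumeration is forced.
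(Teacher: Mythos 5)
The paper's own proof of this proposition is the single phrase ``By construction,'' so your detailed unpacking of the local block partition of $\hat\omega$ is exactly the kind of argument the authors are gesturing at, and your overall architecture --- aligned full blocks in the bulk give pairs of type (1), while the height-$(2^{k_4}\pm 1)$ blocks and the offset shifts produced by $\hat p_1$ in the global safe zones account for the remaining types --- is the right one. The difficulty is that you explicitly defer the only part of the statement that carries content: the claim that the junction pairs realize \emph{precisely} the successor/missing/extra configurations listed, i.e.\ that a deleted level always yields $\tau_{s(i)}^m$ or $\tau_s^m$ adjacent to the correct partner and never some other template. You name this as ``the hard part'' and assert that the Morse identity behind Proposition \ref{everyotherproperty} forces the enumeration, but you do not carry out the case check, so the proposal is a plan rather than a proof. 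Since the paper supplies no more detail than you do, this is as much a criticism of the paper as of your write-up, but the junction analysis is where a referee would press.

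Moreover, one claim you do commit to appears to fail on a literal reading of the statement. You assert that consecutive pairs among the ``shifted full-height blocks'' in a safe zone are of type (2), i.e.\ $\tau \prec \tau$ with the \emph{same} basic template repeated. But two adjacent full-height local blocks at the same nonzero offset are read off from consecutive pairs of substitution subwords: writing $u_{k_6} = w_0 w_1 w_2 \cdots$ with each $w_q \in \{u_{k_4}, \overline u_{k_4}\}$, one block gives the basic template determined by $(w_{q-1}, w_q)$ and the next the one determined by $(w_q, w_{q+1})$. Since the pattern of $u_{k_4}$ versus $\overline u_{k_4}$ is itself a Morse sequence, these pairs generally differ (and $w_{q-1}=w_q=w_{q+1}$ never occurs), so the two templates share a global cut position but realize different forms from item (3) of Section \ref{morsebasictemplates}. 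Either the proposition must be read as identifying basic templates with the same cut position --- which is harmless for Proposition \ref{bodyoverlapsvalid}, because the sets $\tau_{head}$ and $\tau'_{tail}$ depend only on that position --- or the enumeration (yours and the paper's) needs an additional case. You should flag and resolve this rather than fold it silently into type (2).
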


\begin{proof}
By construction.
\end{proof}

\begin{proposition} \label{bodyoverlapsvalid}
In any of the six cases of Proposition \ref{possiblebodyoverlaps}, top and bottom sticky notes can be chosen so that the overlapping concatenation of the corresponding partial interval bijections $\phi_{\tau}^{-1}$ are well-defined.
\end{proposition}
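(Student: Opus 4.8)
The plan is to reduce the proposition to a finite verification about stage-$2$ maps, and then to dispatch the six seam types of Proposition~\ref{possiblebodyoverlaps} one at a time. First I would identify the top and bottom sticky notes of $\phi_\tau^{-1}$ for a stage-$4$ map $\phi_\tau$. Since $\phi_\tau=\phi_\tau^{tail}\ast\phi_\tau^{body}\ast\phi_\tau^{head}$ as in (\ref{basicPIBform}) and inversion distributes over simple concatenation (an immediate consequence of the definition of $\ast$), we have $\phi_\tau^{-1}=(\phi_\tau^{tail})^{-1}\ast(\phi_\tau^{body})^{-1}\ast(\phi_\tau^{head})^{-1}$, so the bottom sticky note of $\phi_\tau^{-1}$ is $(\phi_\tau^{tail})^{-1}$ and the top one is $(\phi_\tau^{head})^{-1}$. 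By the definitions in Section~\ref{stage4stickynotes} (and Section~\ref{stage4missingextra} for missing and extra templates), $\phi_\tau^{tail}$ may be chosen to be $\phi_\omega^{-1}$ for any $\omega$ in $\tau_{tail}$ (or $\tau^m_{tail}$, $\tau^e_{tail}$), and similarly $\phi_\tau^{head}=\phi_{\omega'}^{-1}$ for $\omega'$ in $\tau_{head}$ (or $\tau^m_{head}$, $\tau^e_{head}$); for the zero- and one-templates, where $b=0$, both sticky notes are the trivial (empty) partial interval bijection. Thus $\phi_\tau^{-1}$ has bottom sticky note $\phi_\omega$ ($\omega$ in the tail set) and top sticky note $\phi_{\omega'}$ ($\omega'$ in the head set), and the proposition becomes: in each of the six cases one can choose $\omega'$ in the head set of the left template and $\omega''$ in the tail set of the right template so that $\phi_{\omega'}\sim\phi_{\omega''}$ in the sense required by Definition~\ref{overlapconcatenation1}.

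Next I would exhibit the mechanism common to all cases. Consecutive blocks in the body chain (\ref{phiomegabodydecomp}) meet along a ``seam'': a window of roughly $2^{k_2}$ consecutive odometer cylinders straddling the block boundary, which under the vertical map $\pi\colon\mathcal Q_{k_4}\to\mathcal Q_{k_2}$ is a single template $\omega^\star\in\Omega_{k_2}\cup\widetilde\Omega_{k_2}$ --- basic in case $2$, and a diminished, augmented, missing, or extra template in the cases where one of the two neighbors has lost or gained a level. The head and tail sets of Section~\ref{headsandtails} and Section~\ref{stage4missingextra} were defined exactly so that $\omega^\star$ lies both in the head set of the left template and in the tail set of the right template; choosing the seam sticky note of $\phi_{\tau_i}^{-1}$ and of $\phi_{\tau_{i+1}}^{-1}$ to be $\phi_{\omega^\star}$ then makes the overlapping concatenation well-defined, since $\phi_{\omega^\star}\sim\phi_{\omega^\star}$ trivially. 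In the cases touching the zero- or one-template (cases $1$, $3$, $5$) the seam window is too short to contain the single level any stage-$2$ map moves: Propositions~\ref{nobottomortop} and \ref{nobottomtoporcut} guarantee that this level sits in the interior, so the relevant sticky notes on both sides are in fact empty and glue trivially.

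For cases $4$ and $6$, where $\omega^\star$ is not basic, I would invoke the stage-$2$ matching lemmas. By Lemmas~\ref{stage2diminishedmatching} and \ref{stage2augmentedmatching} the stage-$2$ maps to diminished and to augmented templates match the maps to the two adjacent basic templates, and by Propositions~\ref{nobottomortop} and \ref{nobottomtoporcut} the level deleted or inserted at the seam is never in the domain or range of any stage-$2$ map; therefore the stage-$2$ maps attached to the two ends of the seam are not merely matching but are literally translates of one another, hence equivalent. Running through the explicit list of six seam types --- each determined by whether either neighbor is special, missing, or extra --- then completes the argument.

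The step I expect to be the main obstacle is precisely this last bookkeeping in cases $4$ and $6$ (and the special-template interfaces $1$, $3$, $5$): one must track exactly which levels of $\mathcal Q_{k_4}$, $\mathcal Q_{k_4}^m$, or $\mathcal Q_{k_4}^e$ the modified head and tail sets reach into, confirm that the resulting seam template is a legitimate member of $\Omega_{k_2}\cup\widetilde\Omega_{k_2}$, and check that the two stage-$2$ maps meeting there are genuine translates and not merely ``matching''. Everything else follows directly from the definitions of the sticky notes and of the head and tail sets.
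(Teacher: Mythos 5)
Your overall strategy is the paper's: reduce the proposition to choosing, at each seam, stage-$2$ sticky notes on the two sides that satisfy Definition~\ref{overlapconcatenation1}, handle the repeated-basic-template seam (case 2) by finding a single $\omega\in\mathcal B_{k_2}(Y)$ lying in both $\tau_{tail}$ and $\tau_{head}$, and handle the modified templates via Lemmas~\ref{stage2diminishedmatching} and \ref{stage2augmentedmatching}. That part is sound and matches the intended proof.

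The gap is in your treatment of cases 3 and 5. You assert that at a seam touching the zero- or one-template ``the relevant sticky notes on both sides are in fact empty and glue trivially,'' citing Propositions~\ref{nobottomortop} and \ref{nobottomtoporcut}. Those propositions only exclude the extreme levels and the global cut from the domain and range of a stage-$2$ map; the interior level ($A=\{2\}$) \emph{is} in the domain, so a sticky note of the form $\phi_\omega^{-1}$ is never empty. More to the point, the two sides of these seams are asymmetric: in case 3 the template $\tau_{s(i)}^m$ has its global cut at the top, so $b=2^{k_2}-1\neq 0$ and its head sticky notes are genuine (inverses of) stage-$2$ maps, whereas $\tau=\mathcal P_{k_4}(i)$ has $b=0$ and only the trivial sticky note. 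So the two sticky notes cannot both be empty, and the gluing is not trivial; this is exactly where the paper deploys the diminished-template maps, choosing $\phi_{\omega^-(d)}^{-1}$ on one side and $\phi_{\omega_p^-(u)}^{-1}$ on the other and invoking Lemma~\ref{stage2diminishedmatching} (and the augmented analogue, Lemma~\ref{stage2augmentedmatching}, for case 5). Your proposal reserves those lemmas for cases 4 and 6 only, so cases 3 and 5 are left without a valid argument. The rest of your write-up (the reduction via distributivity of inversion over $\ast$, the common-seam-template mechanism for case 2, and the use of the matching lemmas where one neighbor has lost or gained a level) is correct and consistent with the paper.
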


\begin{proof}
Case 1 is trivial because partial interval bijections to the zero- and one-templates do not have sticky notes (there is no overlap to worry about).  For case 2, observe that if $\omega \in \mathcal B_{k_2}(Y)$ is a basic template from stage 2 such that $\omega \in \tau_{tail}$, then $\omega \in \tau_{head}$ as well, so we can glue $\phi_\tau^{-1}$ together with itself by picking such $\phi_\omega^{-1}$ on the overlap.  For case 3, observe that there exists $\omega \in \mathcal B_{k_2}(Y)$ such that $\ds \phi_{\omega^-(d)}^{-1} \in \phi_\tau^{tail}$ and $\ds \phi_{\omega_p^-(u)}^{-1} \in \phi_{\tau_{s(i)}^m}^{head}$.  Lemma \ref{stage2diminishedmatching} then guarantees that there is a bottom sticky note on $\phi_\tau$ that matches with a top sticky note on $\phi_{\tau_s^m}$.  Cases 4-6 are similar.
\end{proof}

The following shows that \eqref{stage6basicPIBform} is well defined.

\begin{proposition}
Top and bottom sticky notes can be chosen so that the overlapping concatenations $\phi_\omega^{tail} \tilde \ast \phi_\omega^{body}$ and $\phi_\omega^{body} \tilde \ast \phi_\omega^{head}$ are well-defined.
\end{proposition}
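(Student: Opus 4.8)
The plan is to reduce the two well-definedness claims to matching properties of stage-2 partial interval bijections, exactly as in the proof of Proposition~\ref{bodyoverlapsvalid}. By Definition~\ref{overlapconcatenation1}, $\phi_\omega^{tail}\,\tilde \ast\,\phi_\omega^{body}$ is well defined precisely when the top sticky note of $\phi_\omega^{tail}$ is a translate of the bottom sticky note of $\phi_\omega^{body}$, and $\phi_\omega^{body}\,\tilde \ast\,\phi_\omega^{head}$ precisely when the top sticky note of $\phi_\omega^{body}$ is a translate of the bottom sticky note of $\phi_\omega^{head}$. So the first step is to write these four sticky notes down explicitly.

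First I would unwind the definitions. Let $(\tau_1,\tau_2)\in\omega_{tail}\times\omega_{head}$ be the stage-4 templates determining $\phi_\omega$, so that $\phi_\omega^{tail}=\phi_{\tau_1}^{-1}$ and $\phi_\omega^{head}=\phi_{\tau_2}^{-1}$, and write the body of \eqref{phiomegabodydecomp} as $\phi_\omega^{body}=\phi_{\sigma_2}^{-1}\,\tilde \ast\,\cdots\,\tilde \ast\,\phi_{\sigma_{LL'}}^{-1}$ (I rename the body templates $\sigma_i$ to avoid collision with $\tau_1,\tau_2$). Since simple concatenation commutes with inversion, inverting a stage-4 decomposition $\phi_\tau=\phi_\tau^{tail}\ast\phi_\tau^{body}\ast\phi_\tau^{head}$ shows that the bottom sticky note of $\phi_\tau^{-1}$ is $(\phi_\tau^{tail})^{-1}$ and the top sticky note is $(\phi_\tau^{head})^{-1}$; and the reordering maps used to pass from $\widetilde{\widetilde{\phi_\omega^{body}}}$ to $\phi_\omega^{body}$ act only on the body blocks and leave these outermost sticky notes in the same equivalence class (compare Section~\ref{stage4bodymap}). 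Hence the top sticky note of $\phi_\omega^{tail}$ is $(\phi_{\tau_1}^{head})^{-1}$, the bottom sticky note of $\phi_\omega^{body}$ is $(\phi_{\sigma_2}^{tail})^{-1}$, the top sticky note of $\phi_\omega^{body}$ is $(\phi_{\sigma_{LL'}}^{head})^{-1}$, and the bottom sticky note of $\phi_\omega^{head}$ is $(\phi_{\tau_2}^{tail})^{-1}$. Because equivalence is preserved by inversion, it therefore suffices to show that $\phi_{\tau_1}^{head}$ can be chosen equivalent to $\phi_{\sigma_2}^{tail}$, and $\phi_{\sigma_{LL'}}^{head}$ equivalent to $\phi_{\tau_2}^{tail}$.

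Next I would invoke the stage-4 construction of sticky notes. By Section~\ref{stage4stickynotes}, each of $\phi_{\tau_1}^{head}$, $\phi_{\sigma_2}^{tail}$, $\phi_{\sigma_{LL'}}^{head}$, $\phi_{\tau_2}^{tail}$ equals $\phi_\rho^{-1}$ for some stage-2 template $\rho\in\Omega_{k_2}$, with $\rho$ free to range over a nonempty head- or tail-set. So the tail/body junction reduces to producing $\rho\in(\tau_1)_{head}$ and $\rho'\in(\sigma_2)_{tail}$ with $\phi_\rho$ equivalent to $\phi_{\rho'}$, and the body/head junction is symmetric. Now $\omega_{tail}$ and $\omega_{head}$ are defined (Section~\ref{stage6bigpicture}, following Section~\ref{headsandtails}) so that $\tau_1$ shares its top $b$ levels with the bottom of $\mathcal P_{k_6}$ and dovetails with the first body block $\sigma_2$ in exactly the way consecutive body blocks dovetail with one another, and likewise $\tau_2$ with $\sigma_{LL'}$. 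Thus each junction is an instance of one of the six successive-pair types of Proposition~\ref{possiblebodyoverlaps}, and the choices of stage-2 templates exhibited in the proof of Proposition~\ref{bodyoverlapsvalid}---together with the matching Lemmas~\ref{stage2diminishedmatching} and~\ref{stage2augmentedmatching}, which absorb the diminished and augmented adjustments at the ends of blocks---yield the required equivalences.

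The step I expect to be the only real obstacle is the bookkeeping at the two ends of $\mathcal P_{k_6}$: one must check that replacing the bottom block of $\mathcal P_{k_6}$ by a partial block of height $b$ (and the top block by a partial block) introduces no edge effect that destroys the dovetailing present in the interior, i.e., that $(\tau_1)_{head}$ and $(\sigma_2)_{tail}$ really do overlap up to the diminished/augmented matchings. Here Propositions~\ref{stage4nobottomortop} and~\ref{stage4nocut} do the work: they guarantee that neither the bottom nor the top level of $\mathcal P_{k_6}$ lies in any relevant domain or range, so the partial blocks behave exactly like interior blocks and the argument of Proposition~\ref{bodyoverlapsvalid} transfers without change.
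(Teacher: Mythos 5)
Your proposal is correct and follows the same route as the paper, whose entire proof is the single remark that the argument is ``similar to the proof of Proposition~\ref{bodyoverlapsvalid}'': you reduce the two junctions to the sticky-note matching of Proposition~\ref{possiblebodyoverlaps} via Lemmas~\ref{stage2diminishedmatching} and~\ref{stage2augmentedmatching}, which is exactly the intended adaptation. Your version simply spells out the bookkeeping (identifying the four relevant sticky notes as inverses of stage-2 maps and checking the edge blocks via Propositions~\ref{stage4nobottomortop} and~\ref{stage4nocut}) that the paper leaves implicit.
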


\begin{proof}
Similar to the proof of Proposition \ref{bodyoverlapsvalid}.
\end{proof}

The following propositions are analogous to propositions from stage 4.

\begin{proposition}
Neither the bottom level nor the top level of $\mathcal P_{k_6}(0)$ or $\mathcal P_{k_6}(1)$ is in the domain of any $\phi_\omega$.
\end{proposition}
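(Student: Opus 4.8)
The plan is to reproduce the proof of Proposition~\ref{stage4nobottomortop} one level up: read ``stage~$4$'' for ``stage~$2$'', the Morse system for the binary odometer, $\mathcal P_{k_6}(i)$ for $\mathcal Q_{k_4}$, the stage-$4$ Morse templates $\tau\in\mathcal T_{k_4}$ for the stage-$2$ templates $\omega\in\Omega_{k_2}$, and Propositions~\ref{stage4nocut} and~\ref{nobottomortop} for Propositions~\ref{nobottomtoporcut} and~\ref{nobottomortop}. Fix $i\in\{0,1\}$, a basic template $\omega\in\mathcal B_{k_6}(Y)$, and a partial interval bijection $\phi_\omega=\phi_\omega^{tail}\,\tilde\ast\,\phi_\omega^{body}\,\tilde\ast\,\phi_\omega^{head}$ of the form~\eqref{stage6basicPIBform}, determined by a pair $(\tau_1,\tau_2)\in\omega_{tail}\times\omega_{head}$. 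First I would note, exactly as in Section~\ref{stage4bodymap} with the indices shifted, that $\phi_\omega^{body}$ involves only the full local blocks $\omega_2,\dots,\omega_{LL'}$ of $\mathcal P_{k_6}(i)$; consequently the bottom level of $\mathcal P_{k_6}(i)$, which lies in the partial first local block $\omega_1$, can enter the domain of $\phi_\omega$ only through the bottom sticky note $\phi_\omega^{tail}=\phi_{\tau_1}^{-1}$, and the top level only through $\phi_\omega^{head}=\phi_{\tau_2}^{-1}$. So it is enough to show the bottom level of $\mathcal P_{k_6}(i)$ is not in the range of $\phi_{\tau_1}$ and the top level of $\mathcal P_{k_6}(i)$ is not in the range of $\phi_{\tau_2}$.

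For the bottom level: since $\tau_1\in\omega_{tail}$, the bottom $b$ levels of $\mathcal P_{k_6}(i)$, namely $u_{k_6}(0)\prec\cdots\prec u_{k_6}(b-1)$ (or their flips, when $i=1$), are identified with the top $b$ levels of $\tau_1$; applying the vertical map $\pi$ of Section~\ref{Pkpi} and using $b<2^{k_4}$, these project in order onto $u_{k_4}(0)\prec\cdots\prec u_{k_4}(b-1)$ (resp.\ the flipped levels), so the bottom level of $\mathcal P_{k_6}(i)$ occurs as the global cut of $\tau_1$. Proposition~\ref{stage4nocut} --- together with its routine extension to diminished, augmented, missing and extra templates, as was done for stage~$2$ (for a diminished template the global cut has been removed, and for the other types the matchings defining the maps in Section~\ref{stage4missingextra} keep the copies of the global cut out of the range) --- then says this level is not in the range of $\phi_{\tau_1}$, hence not in the domain of $\phi_{\tau_1}^{-1}=\phi_\omega^{tail}$. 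The argument for the top level is the mirror image: since $\tau_2\in\omega_{head}$, the top level of $\mathcal P_{k_6}(i)$ occurs (via $\pi$) as the top level of a Morse tower at stage $k_4$, and this position is not in the range of $\phi_{\tau_2}$ --- the stage-$4$ body map lands only in good-set positions, which avoid the top global safe zone, and by construction the top sticky note of $\phi_{\tau_2}$ does not reach the top level either; this is the stage-$4$ analogue of Proposition~\ref{nobottomortop}. Hence the top level of $\mathcal P_{k_6}(i)$ is not in the domain of $\phi_\omega^{head}$, and the proposition follows.

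The step I expect to be the real work is the one hidden inside ``by construction'': verifying, uniformly as $\tau$ ranges over all of $\mathcal T_{k_4}$ (basic, diminished, augmented, missing, and extra), exactly which positions lie in the range of $\phi_\tau$, since each variation type shifts that set by one level and one must check in every case that the relevant extreme level stays out. This is the same bookkeeping that underlies the one-line proofs of the corresponding statements at stages $2$ and $4$, so in the end the proof should collapse to ``identical to the proof of Proposition~\ref{stage4nobottomortop}'', with only the case analysis over variation types requiring explicit attention.
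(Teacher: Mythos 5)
Your proof is correct and takes essentially the same route as the paper, whose entire argument is ``Follows from Proposition~\ref{stage4nocut}'': the extreme levels of $\mathcal P_{k_6}(i)$ can only enter the domain through the sticky notes $\phi_{\tau_1}^{-1}$ and $\phi_{\tau_2}^{-1}$, and the bottom level sits at the global cut of the tail template. You actually supply more detail than the paper does --- in particular your separate treatment of the top level (which is the position just below the global cut of the head template, not the cut itself) and your case analysis over the non-basic variation types fill in steps the paper leaves implicit.
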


\begin{proof}
Follows from Proposition \ref{stage4nocut}.
\end{proof}

\begin{proposition}
Given $\omega \in \mathcal B_{k_6}(Y)$, the global cut in $\omega$ is not in the range of any $\phi_\omega$.
\end{proposition}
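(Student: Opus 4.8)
The plan is to reduce the statement to Proposition~\ref{stage4nobottomortop}, exactly as Proposition~\ref{stage4nocut} was reduced to Proposition~\ref{nobottomortop} one level down, and as the preceding proposition in this section was reduced to Proposition~\ref{stage4nocut}. Recall that, by construction, every $\phi_\omega$ has the form $\phi_\omega = \phi_\omega^{tail} \tilde \ast \phi_\omega^{body} \tilde \ast \phi_\omega^{head}$, so its range (a set of levels of $\omega$) is the union of the ranges of the three pieces, and it suffices to show that the global cut $v_{k_6}(0)$ of $\omega$ lies in the range of none of them. Up to the reordering maps $\hat p_1, \hat p_2$ — which merely permute levels, hence do not change which levels appear in a range — each piece is built from inverses $\phi_{\tau_i}^{-1}$ of stage-$4$ partial interval bijections, one for each local block of $\hat\omega$ (the body as an overlapping concatenation of such, the sticky notes as single ones), and the range of $\phi_{\tau_i}^{-1}$ is exactly the domain of the stage-$4$ map $\phi_{\tau_i}$, which is a copy of $\mathcal Q_{k_4}$ or a one-level modification of it.

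Next I would pin down where $v_{k_6}(0)$ sits inside $\hat\omega$. If $g$ denotes the position of the global cut of $\omega$, then the congruences $g \equiv a \bmod 2^{k_5}$ and $g \equiv b \bmod 2^{k_4}$ of Section~\ref{freqnotation} force $g$ to be one of the local-cut positions of the local block partition of $\omega$ constructed in Section~\ref{blockpartitions}; thus $v_{k_6}(0)$ is the bottom level of its own local block in $\omega$. Applying $\hat p_1$ (recall $\hat p_2$ is the identity at stage $6$) either leaves this level as the bottom level of a local block of $\hat\omega$ or moves it into the top global safe zone, in which case it becomes the top level of a local block of $\hat\omega$. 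In either case, under $\pi$ its image is $v_{k_4}(0)$, which is simultaneously the bottom and the top (extra) level of the relevant $\mathcal Q_{k_4}$-copy, since odometer towers cycle and the level following $v_{k_4}(2^{k_4}-1)$ is $v_{k_4}(0)$; moreover the $\mathcal Q_{k_4}$-copies that arise as local blocks of $\hat\omega$ are copies of the zero template $\mathcal Q_{k_4}(0)$ (possibly with one level added at the top), because $g \equiv a \bmod 2^{k_5}$ makes the relevant rotation trivial.

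Finally, Proposition~\ref{stage4nobottomortop} — together with the corresponding statements for the extra and augmented variants of $\mathcal Q_{k_4}$, which hold for the same reason because the stage-$4$ maps to those templates were defined to agree with the maps to basic templates away from the technically added level — says precisely that neither the bottom nor the top level of (any such incarnation of) $\mathcal Q_{k_4}$ is in the domain of the associated stage-$4$ map. Hence $v_{k_6}(0)$ lies in the range of no $\phi_{\tau_i}^{-1}$, so in the range of none of $\phi_\omega^{tail}$, $\phi_\omega^{body}$, $\phi_\omega^{head}$, and the proposition follows. The one step that needs care — and the main obstacle — is the bookkeeping of the second paragraph: checking that the global reordering can never carry $v_{k_6}(0)$ into the interior of a local block, that the partial local blocks completed by the head and tail maps never contain it, and that the local-block variant it lands in is always one covered by Proposition~\ref{stage4nobottomortop} or its routine analogues. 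Everything else is immediate.
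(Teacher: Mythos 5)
Your proposal is correct and takes essentially the same route as the paper, whose entire proof is the one-line reduction ``Follows from Proposition~\ref{stage4nobottomortop}'': the global cut of $\omega$ sits at the bottom (or, after global reordering, the top) of a local block, hence corresponds to the bottom or top level of the copy of $\mathcal Q_{k_4}$ on which the relevant stage-$4$ map $\phi_{\tau_i}$ is undefined, so it cannot lie in the range of $\phi_{\tau_i}^{-1}$. Your write-up simply makes explicit the bookkeeping that the paper leaves implicit.
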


\begin{proof}
Follows from Proposition \ref{stage4nobottomortop}.
\end{proof}

\begin{proposition}
Let $g \in \mathcal G_6$ and let $c \in \mathcal P_{k_6}(i)$ be a level that occurs in position $g$, where $i \in \{0, 1\}$.  Then, given $\omega_1$ and $\omega_2$ in $\mathcal B_{k_6}(Y)$ and partial interval bijections $\phi_{\omega_1}$ and $\phi_{\omega_2}$ of the form (\ref{phiomegabodydecomp}), both $\phi_{\omega_1}$ and $\phi_{\omega_2}$ are defined on $c$, and $\pi \circ \zeta \circ \phi_{\omega_1}(c) = \pi \circ \zeta \circ \phi_{\omega_2}(c)$.
\end{proposition}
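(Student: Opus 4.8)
The plan is to run the stage-$4$ argument, i.e.\ the proof of Proposition~\ref{stage4matchingprop}, while carrying the one extra layer of overlapping concatenation that is present in stage $6$. The first step is to reduce to the body maps $\phi_\omega^{body}$. Because $g$ occupies a position in $G_6$, it lies outside both global safe zones in $J$, and moreover $G_6$ is a union of selected local blocks sitting strictly inside the non-safe-zone part of each intermediate block; consequently, for every $\omega \in \mathcal B_{k_6}(Y)$ the level $c$ in position $g$ is untouched by the sticky notes $\phi_\omega^{tail},\phi_\omega^{head}$, and it also lies away from the overlap regions used to glue the stage-$4$ maps $\phi_{\tau_i}^{-1}$ together in~(\ref{phiomegabodydecomp}), since those overlaps occur at the junctions between consecutive local blocks of height $2^{k_4}$, which are disjoint from the interiors picked out by $G_6$. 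So it suffices to show that both body maps are defined at $c$ and agree there after applying $\pi\circ\zeta$.

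The second step invokes the good-set machinery. By construction $\widetilde{\widetilde{\phi_\omega^{body}}} = \phi_{\tau_2}^{-1}\,\tilde\ast\,\cdots\,\tilde\ast\,\phi_{\tau_{LL'}}^{-1}$ (the $b=0$ case being identical), and Proposition~\ref{everyotherproperty}, which holds in stage $6$, yields the exact analogue of Lemma~\ref{stage4matching}: for any $\omega_1,\omega_2 \in \mathcal B_{k_6}(Y)$ the maps $\widetilde{\widetilde{\phi_{\omega_1}^{body}}}$ and $\widetilde{\widetilde{\phi_{\omega_2}^{body}}}$ are either both undefined at $c$, or satisfy $\pi\circ\zeta\circ\widetilde{\widetilde{\phi_{\omega_1}^{body}}}(c) = \pi\circ\zeta\circ\widetilde{\widetilde{\phi_{\omega_2}^{body}}}(c)$. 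Exactly as in stage $4$, this permits extending each $\widetilde{\widetilde{\phi_\omega^{body}}}$ to a partial interval bijection $\widetilde{\phi_\omega^{body}}$ defined on all of $\mathcal G_6$, with $\pi\circ\zeta$-image independent of $\omega$; since $c$ lies in $\mathcal G_6$, it is in this extended domain, so $\widetilde{\phi_{\omega_1}^{body}}$ and $\widetilde{\phi_{\omega_2}^{body}}$ are both defined at $c$.

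The third step is to pass from $\widetilde{\phi_\omega^{body}}$ to $\phi_\omega^{body}$, which is obtained by post-composing with the inverse of a reordering map (of the form $(p_2\circ p_1)^{-1}$ or $(\overline{p_2\circ p_1})^{-1}$). A reordering map merely permutes the positions of levels within a template and does not alter the levels themselves as subsets of $Y$, hence does not change their images under $\zeta$ or $\pi$; therefore $\pi\circ\zeta\circ\phi_\omega^{body}(c) = \pi\circ\zeta\circ\widetilde{\phi_\omega^{body}}(c)$, which is independent of $\omega$. Since $c$ is in the body part, overlapping concatenation with the sticky notes leaves the value (and the fact of being defined) at $c$ unchanged, so $\phi_{\omega_1}$ and $\phi_{\omega_2}$ are both defined at $c$ and $\pi\circ\zeta\circ\phi_{\omega_1}(c) = \pi\circ\zeta\circ\phi_{\omega_2}(c)$.

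I expect the only genuinely delicate point to be the reduction in the first step: one must check that the additional round of overlapping concatenation in stage $6$ does not disturb the body maps on $\mathcal G_6$, i.e.\ that every gluing overlap (whose existence is guaranteed by Proposition~\ref{bodyoverlapsvalid}) lies in a safe zone and so misses $G_6$. Once that localization is in hand, everything else is a transcription of the stage-$4$ argument.
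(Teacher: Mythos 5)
Your proposal is correct and follows essentially the same route as the paper: the paper's proof simply notes that Proposition \ref{everyotherproperty} yields the stage-$6$ analogue of Lemma \ref{stage4matching}, from which the proposition follows as in stage $4$. You have merely made explicit the supporting details (the reduction to the body maps, the localization of sticky-note overlaps inside the safe zones, and the harmlessness of the reordering maps) that the paper leaves implicit.
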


\begin{proof}
Proposition \ref{everyotherproperty} can be used to show that Lemma \ref{stage4matching} holds in stage 6 (with appropriate notational modifications).  The proposition follows.
\end{proof}

\subsection{Maps to Diminished, Augmented, Missing, and Extra Templates} \label{stage6missingextra}

The definitions of these modified maps are analogous to those in stage 4.  Maps to missing and extra templates are needed to define the maps at stage 8 in places where individual levels have been deleted or inserted.  Analogues of Lemmas \ref{stage4dimlemma} and \ref{stage4auglemma} hold in stage 6, and are used to glue sticky notes together in stage 10.

\section{Completing the induction} \label{stage8}

The essential components of the induction have now been established.  Stages $8, 12, 16, \ldots$ are analogous to stage $4$, while stages $10, 14, 18, \ldots$ are analogous to stage $6$.  In each stage $n \geq 8$, concatenations of the (inverses of) the partial interval bijections at stage $n-2$ are glued together using Definition \ref{overlapconcatenation2}.

\section{Convergence of Partial Interval Bijections} \label{convergenceofPIBs}

Given a level $c \in \mathcal G_4$, Proposition \ref{stage4matchingprop} guarantees that $\pi \circ \zeta \circ \phi_\tau(c)$ is a level in $\mathcal P_{k_2}$ that does not depend on which $\tau \in \mathcal B_{k_4}$ is used.  Moreover, Proposition \ref{stage4matchingprop} holds in every stage $n \equiv 0 \mod 4$, which permits us to define $\phi^n: \mathcal Q_{k_n} \rightarrow \mathcal P_{k_{n-2}}$ to be the restriction of $\pi \circ \zeta \circ \phi_\tau$ to $\mathcal G_n$.  Similarly, for $n \equiv 2 \mod 4$, we can define $\phi^n: \mathcal P_{k_n} \rightarrow \mathcal Q_{k_{n-2}}$ to be the restriction of $\pi \circ \zeta \circ \phi_\omega$ to $\mathcal G_n$.

Given $n \equiv 0 \mod 4$ and $y \in Y$, let $d_{k_n}(y)$ denote the unique $k_n$-canonical cylinder in $Y$ that contains $y$.  Let $\mathcal H \subset Y$ be the set of $y \in Y$ such that $d_{k_n}(y) \in \mathcal G_{n}$ for infinitely many $n$.

Similarly, given $n \equiv 2 \mod 4$ and $x \in X$, let $c_{k_n}(x)$ denote the unique $k_n$-canonical cylinder in $X$ that contains $x$.  Let $\mathcal G \subset X$ be the set of $x \in X$ such that $c_{k_n}(x) \in \mathcal G_{n}$ for infinitely many $n$.

\begin{theorem}
The sets $\mathcal G$ and $\mathcal H$ are $G_\delta$ sets of full measure.
\end{theorem}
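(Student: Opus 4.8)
The plan is to treat the two sets $\mathcal G\subset X$ and $\mathcal H\subset Y$ in parallel, since the arguments are identical after interchanging the roles of the two systems; I will carry out the case of $\mathcal H$ (the case $n\equiv 0\bmod 4$) and note that $\mathcal G$ is handled the same way using the $n\equiv 2\bmod 4$ good sets. Write, for $n\equiv 0\bmod 4$, the set $\mathcal G_n^*=\{y\in Y : d_{k_n}(y)\in\mathcal G_n\}$, a union of $k_n$-canonical cylinders in $Y$, hence clopen. By definition $\mathcal H=\bigcap_{N}\bigcup_{n\geq N, \, n\equiv 0(4)}\mathcal G_n^*$, i.e. $\mathcal H=\limsup_n \mathcal G_n^*$, which is automatically a $G_\delta$ set (a countable intersection of the open sets $\bigcup_{n\geq N}\mathcal G_n^*$). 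So the only substantive point is that $\mathcal H$ has full measure.

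For the measure, I would invoke the Borel--Cantelli machinery. First, from the discussion following the definition of the good set (Section~\ref{thegoodset}), $\nu(\mathcal G_n^*)=\nu(\mathcal G_n)\approx \tfrac12$; more precisely, because the global and intermediate safe zones occupy a fraction of $J$ that is controlled by $\varepsilon_{n-1}$ and $\varepsilon_{n-2}$ via inequalities (\ref{m1choice}) and (\ref{m2choice}), and since $G_{n}$ consists of every other local block outside those zones, one gets $\nu(\mathcal G_n)\geq \tfrac12 - \delta_n$ with $\delta_n\to 0$; in particular $\sum_{n\equiv 0(4)}\nu(\mathcal G_n^*)=\infty$. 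Second, Proposition~\ref{independenceofgoodsets} tells us that for $n<m$ both congruent to $0\bmod 4$, the events $\mathcal G_n^*$ and $\mathcal G_m^*$ are independent in $Y$. Thus the sequence $(\mathcal G_n^*)_{n\equiv 0(4)}$ is a pairwise independent sequence of events whose measures sum to infinity, and the second Borel--Cantelli lemma (in its pairwise-independent form, e.g. via the variance/Chebyshev argument applied to $\sum_{n\leq N}\mathbf 1_{\mathcal G_n^*}$) gives $\nu(\limsup_n\mathcal G_n^*)=1$, i.e. $\nu(\mathcal H)=1$. The same argument with $\mathcal H_n\subset \mathcal P_{k_n}$, $\mu$, and $X$ in place of $\mathcal G_n$, $\nu$, and $Y$ gives $\mu(\mathcal G)=1$.

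The main obstacle is making the lower bound $\nu(\mathcal G_n)\geq \tfrac12-\delta_n$ with $\sum\nu(\mathcal G_n)=\infty$ fully precise: one must check that the good set $G_{n+2}=\bigcup_{s}G(s)$, after the global safe zones at the ends are discarded and within each retained intermediate block the intermediate safe zones and then every other surviving local block are discarded, still has relative size bounded below by a constant arbitrarily close to $\tfrac12$; this is exactly where the summability of $(\varepsilon_n)$ and the size requirements (\ref{knplusonechoice}), (\ref{knplustwochoice}) on $k_{n+1},k_{n+2}$ are used. Once that quantitative estimate is in hand, pairwise independence from Proposition~\ref{independenceofgoodsets} does all the remaining work, and the $G_\delta$ statement is immediate from the clopenness of each $\mathcal G_n^*$ and the $\limsup$ form of $\mathcal H$.
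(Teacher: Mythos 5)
Your proposal is correct and follows essentially the same route as the paper: express the set as a $\limsup$ of clopen (hence open) sets to get the $G_\delta$ property, and apply the second Borel--Cantelli lemma using Proposition \ref{independenceofgoodsets} together with $\nu(\mathcal G_n)\approx\tfrac12$ to get full measure. Your added care in noting that the proposition only yields pairwise independence (so one should cite the pairwise-independent form of Borel--Cantelli) is a worthwhile refinement of the paper's terser argument, but it is the same proof.
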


\begin{proof}
By Proposition \ref{independenceofgoodsets}, the sets $\mathcal G_n$ for $n \equiv 0 \mod 4$ are independent with respect to $\nu$.  Therefor $\nu(\mathcal G) = 1$ by the Borel-Cantelli Lemma.  Moreover, the sets $\mathcal G_n$ are open, so $\mathcal G$ is a $G_\delta$ subset of $Y$.  The argument for $\mathcal H$ is similar.
\end{proof}

\begin{lemma} \label{containmentlemma}
Given $t > 0$, if $n \equiv 0 \mod 4$ and $d \in \mathcal G_n$ and $d' \in \mathcal G_{n + 4t}$ are levels such that $d' \subset d$, then $\phi^{n+4t}(d') \subset \phi^n(d)$. Also, if $n \equiv 2 \mod 4$, $c \in \mathcal G_n$, $c' \in \mathcal G_{n+4t}$, and $c' \subset c$, then $\phi^{n+4t}(c') \subset \phi^n(c)$.
\end{lemma}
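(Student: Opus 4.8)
The plan is to establish the statement for $n \equiv 0 \bmod 4$; the case $n \equiv 2 \bmod 4$ is the mirror image, with the roles of $X$ and $Y$ exchanged. By definition $\phi^{n+4t}(d') = \pi\circ\zeta\circ\phi_{\tau'}(d')$ for any basic template $\tau' \in \mathcal B_{k_{n+4t}}(X)$, and $\phi^{n}(d) = \pi\circ\zeta\circ\phi_{\tau}(d)$ for any basic template $\tau \in \mathcal B_{k_{n}}(X)$, where in each instance $\pi\circ\zeta$ is the natural projection sending a $k$-canonical cylinder to the $k'$-canonical cylinder containing it, with $k' = k_{n+4t-2}$ and $k' = k_{n-2}$ respectively (Sections \ref{Pkpi} and \ref{Qkpi}). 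Since $k_{n+4t-2} \ge k_{n-2}$, the conclusion $\phi^{n+4t}(d') \subset \phi^{n}(d)$ says exactly that the $k_{n-2}$-canonical cylinder containing $\phi^{n+4t}(d')$ is $\phi^{n}(d)$. Because $d \in \mathcal G_{n}$, Proposition \ref{stage4matchingprop}, and its analogue at every stage $\equiv 0 \bmod 4$, guarantees that $\phi^{n}(d)$ does not depend on the choice of $\tau$. So the first step is a reduction: it is enough to find one basic template $\rho \in \mathcal B_{k_{n}}(X)$ with $\pi_{k_{n+4t}\to k_{n}}\bigl(\phi_{\tau'}(d')\bigr) = \phi_{\rho}(d)$, for then one more projection gives $\pi_{k_{n+4t}\to k_{n-2}}(\phi_{\tau'}(d')) = \pi_{k_{n}\to k_{n-2}}(\phi_{\rho}(d)) = \phi^{n}(d)$.

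To produce such a $\rho$ I would track $d'$ down through the $2t$ layers of the construction separating stage $n+4t$ from stage $n$, arguing by induction on $t$. Because $d' \in \mathcal G_{n+4t}$, the level $d'$ lies in the interior of the body of $\mathcal Q_{k_{n+4t}}$, away from the bottom and top sticky notes: the good set $G_{n+4t}$ omits both the global and intermediate safe zones, where the sticky notes are supported, as well as the boundary intermediate blocks. Hence $\phi_{\tau'}$ acts on $d'$ purely through its body map. Projecting one stage coarser kills the effect of the reordering maps $\hat p_1$ and $\hat p_2$ --- this is precisely what Propositions \ref{intcutspostp1} and \ref{twocases} were arranged to give: outside the safe zones the reordered intermediate and local blocks line up, as sets, with those of the zero template. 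So $\pi_{k_{n+4t}\to k_{n+4t-2}}(\phi_{\tau'}(d'))$ equals $\phi_{\sigma_1}^{\pm1}(d_1)$, where $\phi_{\sigma_1}^{\pm1}$ is the stage $(n+4t-2)$ partial interval bijection attached to the local block of $\hat\tau'$ carrying $d'$ and $d_1$ is the $k_{n+4t-2}$-ancestor of $d'$; repeating the argument at stage $n+4t-2$ reaches a stage $(n+4t-4)$ datum, and so on. After $2t$ such steps we arrive at a basic template $\rho$ at stage $n$ evaluated at the $k_{n}$-ancestor of $d'$, which is $d$ by hypothesis. Note that the inductive statement must be framed for arbitrary basic templates and arbitrary body-region levels rather than only for good-set levels, since the matching property is used only in the final reduction step above; the base case $t = 1$ is the two-layer unwinding just described, landing at stage $n$ directly.

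The step I expect to be the main obstacle is the combinatorial bookkeeping that the local block structures of the successive reordered templates $\hat\tau', \hat\sigma_1, \ldots$ nest correctly from each stage to the next, so that at every layer the local block carrying $d'$ is, under the vertical projection, a faithful copy of the relevant lower-stage canonical tower, with the level of that tower corresponding to $d'$ being exactly the ancestor of $d'$. Supplying this nesting is the whole reason the reordering maps appear in the construction, so the work consists of unpacking Propositions \ref{intcutspostp1}, \ref{twocases}, and \ref{everyotherproperty} and iterating them along the chain of $2t$ stages. A subsidiary point to verify is that the overlapping-concatenation structure of the body maps (Definitions \ref{overlapconcatenation1} and \ref{overlapconcatenation2}) causes no trouble: good-set levels sit in the interiors of the constituent partial interval bijections, not on the overlapped sticky notes, so each is governed by a single constituent map at every layer.
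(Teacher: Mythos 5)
Your proposal is correct and follows essentially the same route as the paper: the paper's (two-sentence) proof simply asserts that the stage-$(n+4t)$ partial interval bijections are extensions of concatenations of the stage-$n$ ones, hence commute with the vertical projections $\pi$, giving $\pi\circ\phi^{n+4t}(d')=\phi^n\circ\pi(d')=\phi^n(d)$. Your argument is a detailed unpacking of that one assertion --- the reduction via the good-set matching property and the stage-by-stage tracking through the block structure and reordering maps --- so it is the same proof, made explicit.
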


\begin{proof}
Given $\tau \in \mathcal T_{k_{n+4t}} \cup \widetilde{\mathcal T}_{k_{n+4t}}$, the map $\phi_\tau$ at stage $n + 4t$ is an extension of a concatenation of the maps $\phi_\tau$ at stage $n$.  It follows that \[\pi \circ \phi^{n+4t}(d') = \phi^n \circ \pi(d') = \phi^n(d),\] where, depending on the context, $\pi$ refers either to the map $\pi: \mathcal P_{k_{n+4t-2}} \rightarrow \mathcal P_{k_{n-2}}$ or to the map $\pi: \mathcal Q_{k_{n+4t}} \rightarrow \mathcal Q_{k_n}$.  The argument when $n \equiv 2 \mod 4$ is nearly identical.
\end{proof}

Given $y \in \mathcal H$, let $(n(i))_{i \in \n}$ be the increasing sequence of indices, each congruent to $0 \mod 4$, such that each $d_{k_{n(i)}}(y) \in \mathcal G_{k_{n(i)}}$.  Then, by Lemma \ref{containmentlemma}, the levels $\ds (\phi^{n(i)}(d_{k_{n(i)}}(y)))_{i \in \n}$ form a nested sequence.  It follows that there is a unique point in the intersection $\ds \bigcap_{i \in \n} \phi^{n(i)}(d_{k_{n(i)}}(y))$.  Similarly, if $(m(i))_{i \in \n}$ is the analogous sequence of indices for $x \in \mathcal G$, then there is a unique point in the intersection $\ds \bigcap_{i \in \n} \phi^{m(i)}(c_{k_{n(i)}}(x))$.  This permits the following definition.

\begin{definition} \rm
Given $x \in \mathcal G$, let $\phi(x)$ be the unique point in the intersection $\ds \bigcap_{i \in \n} \phi^{m(i)}(c_{k_{n(i)}}(x))$.  Given $y \in \mathcal H$, let $\psi(y)$ be the unique point in the intersection $\ds \bigcap_{i \in \n} \phi^{n(i)}(d_{k_{n(i)}}(y))$.
\end{definition}

\begin{theorem}
The maps $\phi$ and $\psi$ are continuous in the relative topologies on $\mathcal G$ and $\mathcal H$.
\end{theorem}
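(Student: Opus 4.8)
The plan is to verify continuity of $\phi$ at each point of $\mathcal G$ directly from the definition; the argument for $\psi$ on $\mathcal H$ is identical with the roles of $X$ and $Y$ interchanged. Fix $x \in \mathcal G$ and let $(m(i))_{i\in\n}$ be the increasing sequence of indices congruent to $2 \bmod 4$ with $c_{k_{m(i)}}(x) \in \mathcal G_{m(i)}$; by definition of $\mathcal G$ this sequence is infinite, so $m(i) \to \infty$. Given $\varepsilon > 0$, I would first use the fact that the $\rho$-diameters of $k$-canonical cylinders in $Y$ tend to $0$ as $k \to \infty$ to choose $i$ so large that every $k_{m(i)-2}$-canonical cylinder in $Y$ has diameter less than $\varepsilon$.

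Next I would take as the candidate neighborhood of $x$ the set $U = \{x' \in \mathcal G : c_{k_{m(i)}}(x') = c_{k_{m(i)}}(x)\}$. Since a $k_{m(i)}$-canonical cylinder in $X$ is clopen, $U$ is relatively open in $\mathcal G$ and contains $x$. The crucial point is that for every $x' \in U$ we have $c_{k_{m(i)}}(x') = c_{k_{m(i)}}(x) \in \mathcal G_{m(i)}$, so the index $m(i)$ occurs in the sequence of good indices associated to $x'$ as well. Consequently the nested sequence of levels whose intersection defines $\phi(x')$ includes the term $\phi^{m(i)}\bigl(c_{k_{m(i)}}(x')\bigr) = \phi^{m(i)}\bigl(c_{k_{m(i)}}(x)\bigr)$, so $\phi(x')$ lies in this level; and of course $\phi(x)$ lies in it too. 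Since $\phi^{m(i)}$ maps $\mathcal P_{k_{m(i)}}$ into $\mathcal Q_{k_{m(i)-2}}$, the set $\phi^{m(i)}\bigl(c_{k_{m(i)}}(x)\bigr)$ is a single $k_{m(i)-2}$-canonical cylinder in $Y$, of diameter less than $\varepsilon$ by our choice of $i$. Hence $\rho(\phi(x'),\phi(x)) < \varepsilon$ for all $x' \in U$, which is exactly the continuity of $\phi$ at $x$ in the relative topology on $\mathcal G$.

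The content of the argument is carried entirely by the stabilization of the construction recorded in Lemma \ref{containmentlemma}: it is this nesting that guarantees that pinning $\phi(x')$ down to within the $k_{m(i)-2}$-cylinder $\phi^{m(i)}\bigl(c_{k_{m(i)}}(x)\bigr)$ at stage $m(i)$ is never undone at later stages. The remaining ingredients are routine — the relative openness of $U$ is immediate from the fact that canonical cylinders are clopen, and the shrinking of cylinder diameters is immediate from the definitions of the metrics on $X$ and $Y$. The one step that requires any care, and the reason the neighborhood $U$ is taken to be an entire $k_{m(i)}$-canonical cylinder rather than anything smaller, is the verification that $m(i)$ is a good index not merely for $x$ but for every nearby $x'$; once that is in hand there is no further obstacle.
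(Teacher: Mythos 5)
Your argument is correct and is essentially the paper's own proof: both choose a good stage $n=m(i)$ for $x$ large enough that the image cylinder at stage $n-2$ has diameter less than $\varepsilon$, observe that any $x'$ in the same $k_n$-cylinder as $x$ shares that good stage, and then use the nesting from Lemma \ref{containmentlemma} to conclude that $\phi(x)$ and $\phi(x')$ lie in the same small cylinder. The only cosmetic difference is that you phrase the neighborhood as the clopen cylinder itself, whereas the paper picks a $\delta$ forcing $c_{k_n}(x')=c_{k_n}(x)$; these are equivalent.
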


\begin{proof}
Let $x \in \mathcal G$ and $\varepsilon > 0$.  Choose $n \equiv 2 \mod 4$ large enough so that $\ds 1/2^{k_{n-2}} < \varepsilon$ and such that $x \in \mathcal G_{k_n}$.  Choose $\delta > 0$ small enough so that $\rho(x, x') < \delta$ implies $c_{k_n}(x) = c_{k_n}(x')$.  In particular, $\rho(x, x') < \delta$ implies $x' \in \mathcal G_{k_n}$.  It then follows from Lemma \ref{containmentlemma} that, for all $t > 0$ such that $c_{k_{n+4t}}(x) \in \mathcal G_{k_{n+4t}}$, \[\phi^{n + 4t}(c_{k_{n+4t}}(x)) \subset \phi^n(c_{k_n}(x))\] and, for all $\hat t > 0$ such that $c_{k_{n + 4\hat t}}(x') \in \mathcal G_{k_{n+4 \hat t}}$, \begin{eqnarray*} \phi^{n+4\hat t}(c_{k_{n+4 \hat t}}(x')) & \subset & \phi^n(c_{k_n}(x'))  = \phi^n(c_{k_n}(x)). \end{eqnarray*}  Therefore \[\rho(\phi(x), \phi(x')) < \frac{1}{2^{k_{n-2}}} < \varepsilon.\]  The continuity of $\psi$ is proved similarly.
\end{proof}

\begin{theorem}
The maps $\phi$ and $\psi$ are measure preserving.
\end{theorem}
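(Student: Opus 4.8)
The plan is to show that $\phi_{*}\mu=\nu$; the argument for $\psi$ is the mirror image, with the roles of $X$ and $Y$, and of the residues $0$ and $2$ modulo $4$, interchanged. Since the canonical cylinders generate the Borel $\sigma$-algebra of $Y$, it suffices to verify $\mu(\phi^{-1}(d))=\nu(d)$ for every canonical cylinder $d$, and after refining we may take $d$ to be a $k_{n_0-2}$-canonical cylinder with $n_0\equiv 2\bmod 4$. For each $n\equiv 2\bmod 4$ with $n\ge n_0$, set
\[
F_n^d=\{x\in X:\ c_{k_n}(x)\in\mathcal G_n\ \text{and}\ \phi^n(c_{k_n}(x))\subseteq d\},
\]
a clopen set which is a union of $k_n$-canonical cylinders. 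The first step is to identify $\phi^{-1}(d)$ with $\bigcup_n F_n^d$ modulo a $\mu$-null set. If $x\in\mathcal G$ and $n$ is one of the indices with $c_{k_n}(x)\in\mathcal G_n$, then $\phi(x)$ lies in $\phi^n(c_{k_n}(x))$, a $k_{n-2}$-cylinder; since $k_{n-2}\ge k_{n_0-2}$, this cylinder is contained in $d$ or disjoint from it, so $\phi(x)\in d$ if and only if $\phi^n(c_{k_n}(x))\subseteq d$, that is, $x\in F_n^d$. As the set of such indices is infinite for every $x\in\mathcal G$ and $\mu(\mathcal G)=1$, this gives $\phi^{-1}(d)\subseteq\bigcup_n F_n^d$ and $\bigl(\bigcup_n F_n^d\bigr)\setminus\phi^{-1}(d)\subseteq X\setminus\mathcal G$, hence $\mu(\phi^{-1}(d))=\mu\bigl(\bigcup_n F_n^d\bigr)$.

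The second step is to compute the measures of the finite unions $U_k=F_{n_1}^d\cup\cdots\cup F_{n_k}^d$, where $n_0\le n_1<n_2<\cdots$ enumerate the admissible indices. First I would establish the single-stage \emph{balancedness} identity $\mu(F_n^d)=\mu(\mathcal G_n)\,\nu(d)$: the partial interval bijection $\phi_\omega$ carries the levels of $\mathcal G_n$ bijectively onto the levels of the reordered template occupying the positions in $G_n$, and by Propositions~\ref{intcutspostp1}, \ref{twocases} and \ref{everyotherproperty} the image under $\pi\circ\zeta$ of such a level depends only on the residue of its position modulo $2^{k_{n-2}}$ (this is exactly what the global reordering $\hat p_1$, and for the Morse towers the intermediate reordering $\hat p_2$ together with the every-other-block choice in the definition of $G_n$, are designed to guarantee), while $G_n$ is by construction equidistributed among residues modulo any $2^{k_{n_0-2}}$ with $k_{n_0-2}\le k_{n-2}$. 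Next, for $n<m$ among the $n_j$, Lemma~\ref{containmentlemma} gives $\phi^m(c_{k_m}(x))\subseteq\phi^n(c_{k_n}(x))$ whenever both $c_{k_n}(x)\in\mathcal G_n$ and $c_{k_m}(x)\in\mathcal G_m$, so membership in $F_m^d$ and in $F_n^d$ agree on $\mathcal G_n\cap\mathcal G_m$; hence $F_n^d\cap F_m^d=F_n^d\cap\mathcal G_m$. Moreover $F_n^d$ is a union of $k_n$-canonical cylinders, so the cut-and-stack computation in the proof of Proposition~\ref{independenceofgoodsets} applies verbatim to show that $F_n^d$ and $\mathcal G_m$ are independent. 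Combining these, $U_{k-1}\cap F_{n_k}^d=U_{k-1}\cap\mathcal G_{n_k}$ and $U_{k-1}$ is independent of $\mathcal G_{n_k}$, so $\mu(U_k)=\mu(U_{k-1})+\mu(\mathcal G_{n_k})\nu(d)-\mu(U_{k-1})\mu(\mathcal G_{n_k})$, which rearranges to
\[
\nu(d)-\mu(U_k)=\bigl(1-\mu(\mathcal G_{n_k})\bigr)\bigl(\nu(d)-\mu(U_{k-1})\bigr),
\]
and therefore $\nu(d)-\mu(U_k)=\nu(d)\prod_{j=1}^{k}\bigl(1-\mu(\mathcal G_{n_j})\bigr)$.

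Finally, because $\mu(\mathcal G_n)$ is bounded away from $0$ (it is approximately $\tfrac12$ for all large $n$), $\sum_j\mu(\mathcal G_{n_j})=\infty$ and hence $\prod_{j=1}^{k}\bigl(1-\mu(\mathcal G_{n_j})\bigr)\to 0$; letting $k\to\infty$ gives $\mu(\phi^{-1}(d))=\lim_k\mu(U_k)=\nu(d)$. I expect the main obstacle to be the single-stage balancedness identity $\mu(F_n^d)=\mu(\mathcal G_n)\nu(d)$: it is the only point that uses the fine combinatorics of the reordering maps and the explicit description of $G_n$ rather than soft measure-theoretic facts, and the bookkeeping is most delicate for the two Morse towers, where one must verify that the every-other-block positions retained in $G_n$ line up with the zero template and not with its flip.
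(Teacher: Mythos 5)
Your proposal is correct and follows essentially the same route as the paper: both accumulate, stage by stage, the good-set levels whose image is forced into the target cylinder $d$, both rest on the balancedness ratio $\nu(d)$ within each stage's good set (which the paper likewise simply asserts as its equation for $|E_n|/|D_n|$), and both use the fact that the good sets have measure bounded below to drive the unaccounted mass to zero geometrically. The only difference is bookkeeping: you obtain exact equality directly from an independence/product formula, whereas the paper uses a conservative counting recursion (the $1/3$ bound) to get $\mu(\phi^{-1}(d)) \geq \nu(d)$ and then sums over all levels $d$ to upgrade to equality.
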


\begin{proof}
Fix a natural number $m \equiv 0 \mod 4$ and a level $d$ in $\mathcal Q_{k_m}$.  We wish to show that $\mu(\phi^{-1}(d)) = \nu(d) = 1/2^{k_m}$.  Given $n > 0$, let  \[J_n(d) = \{ \mbox{levels } c \in \mathcal P_{m + 4n - 2} \; : \; \zeta \circ \phi_\omega(c) = d\; \; \forall \omega \in \Omega_{k_{m+4n-2}} \cup \widetilde \Omega_{k_{m+4n-2}}\},\] where $\zeta$ is the map $\zeta: \Omega_{k_{m+4n-2}} \cup \widetilde \Omega_{k_{m+4n-2}} \rightarrow \mathcal Q_{k_m}$.  

Let $D_n = \mathcal G_{m+4n-2}$ and $E_n = D_n \cap J_n(d)$.  Observe that \begin{equation} \label{Enequation} \frac{|E_n|}{|D_n|} = \frac{1}{2^{k_m}}.\end{equation}  As discussed in Section \ref{thegoodset}, for large $n$, $D_n$ consists of roughly half of the levels in $\mathcal P_{k_{m+4n-2}}$, so we can (conservatively) assume $D_n$ consists of at least $1/3$ of them.  Then \begin{equation} \label{Dnequation} |D_n| \geq \underbrace{2^{k_{m+4n-2}}/3 + 2^{k_{m+4n-2}}/3}_{1/3 \mbox{ of each tower}} = 2 \cdot 2^{k_{m+4n-2}}/3.\end{equation}  For $1 \leq a \leq n-1$, let $\widehat D_{n-a}$ denote the set of levels in $\mathcal P_{k_{m+4n-2}}$ that are contained in levels from $\mathcal G_{k_{m+4(n-a)-2}}$, and recursively define $\ds D_{n-a} = \widehat D_{n-a} \setminus \bigcup_{a'=0}^{a-1} D_{n-a'}$ and $\ds E_{n-a} = D_{n-a} \cap J_n(d)$.  Observe that \begin{equation} \label{Enminusaequation} \frac{|E_{n-a}|}{|D_{n-a}|} = \frac{1}{2^{k_m}}\end{equation} and \begin{equation} \label{Dnminusaequation} |D_{n-a}| \geq \frac{1}{3} \left(\underbrace{2 \cdot 2^{k_{m+4n-2}}}_{2 \mbox{ towers}} - \sum_{a'=0}^{a-1} | D_{n-a'}|  \right).\end{equation}  (Again, $1/3$ is a conservative lower bound---it is actually closer to $1/2$).  It follows from (\ref{Dnequation}) and (\ref{Dnminusaequation}) that \[ \sum_{a = 0}^{n-1} |D_{n-a}| \geq 2 \cdot 2^{k_{m+4n-2}} \left( 1 - \left( \frac{2}{3} \right)^n \right).\]  It now follows from (\ref{Enequation}) and (\ref{Enminusaequation}) that \[ |J_n(d)|  \geq  \sum_{a=0}^{n-1} |E_{n-a}|  =  \frac{1}{2^{k_m}} \sum_{a = 0}^{n-1} |D_{n-a}|  \geq  2 \cdot 2^{k_{m+4n-2}} \cdot \frac{1 - \left(\frac{2}{3} \right)^n}{2^{k_m}}.  \] This implies that \[\mu(J_n(d)) \geq \frac{1}{2^{k_m}} - \left( \frac{1}{2^{k_m}} \right) \cdot \left(\frac{2}{3} \right)^n.\]  Letting $n \rightarrow \infty$ gives $\mu(\phi^{-1}(d)) \geq \nu(d)$.  This being true for each level $d \in \mathcal Q_{k_m}$ trivially implies that $\mu(\phi^{-1}(d)) = \nu(d)$.

\end{proof}

\begin{theorem}
If $x \in \mathcal G$ and $\phi(x) \in \mathcal H$, then $\psi(\phi(x)) = x$.  Similarly, if $y \in \mathcal H$ and $\psi(y) \in \mathcal G$, then $\phi(\psi(y)) = y$.
\end{theorem}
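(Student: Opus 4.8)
The plan is to establish the first implication; the second is symmetric, interchanging the roles of the two systems, of $\phi$ and $\psi$, and of the two residue classes mod $4$. Write $y=\phi(x)$. Let $(m(i))_{i\in\n}$ enumerate, in increasing order, the stages $m\equiv 2\bmod 4$ with $c_{k_m}(x)\in\mathcal G_m$, and let $(n(j))_{j\in\n}$ enumerate the stages $n\equiv 0\bmod 4$ with $d_{k_n}(y)\in\mathcal G_n$; both sequences are infinite because $x\in\mathcal G$ and $y=\phi(x)\in\mathcal H$. By definition $\psi(y)$ is the unique point of $\bigcap_{j}\phi^{n(j)}(d_{k_{n(j)}}(y))$, and by Lemma~\ref{containmentlemma} these cylinders are nested, so it suffices to prove that $x\in\phi^{n(j)}(d_{k_{n(j)}}(y))$ for every $j$.

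The mechanism is the recursive way the partial interval bijections are built: for $n\equiv 0\bmod 4$ the stage-$n$ maps $\phi_\tau$ ($\tau\in\mathcal T_{k_n}\cup\widetilde{\mathcal T}_{k_n}$) are, in their bodies, overlapping concatenations of inverses of the stage-$(n-2)$ maps, and symmetrically for $n\equiv 2\bmod 4$. Fix $m=m(i)$ and set $c=c_{k_m}(x)$. Since $m\equiv 2\bmod 4$, $\phi^m(c)=\pi\circ\zeta\circ\phi_\omega(c)$, and inside the body of $\phi_\omega$ the level $c$ is handled by the inverse of a single stage-$(m-2)$ partial interval bijection $\phi_\tau$, $\tau\in\mathcal B_{k_{m-2}}(X)$ (or a modified version thereof). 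Unwinding the definitions of overlapping concatenation and of the inclusions $\pi,\zeta$ yields $\phi_\tau(e)=c_{k_{m-2}}(x)$, where $e:=\phi^m(c)$, and hence $\pi\circ\zeta\circ\phi_\tau(e)=c_{k_{m-4}}(x)$. Moreover $e$ is precisely $d_{k_{m-2}}(y)$, since $y=\phi(x)$ belongs to $\phi^{m}(c_{k_{m}}(x))$ by the definition of $\phi$.

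Next I would show that, for infinitely many $i$, the cylinder $e=d_{k_{m(i)-2}}(y)$ also lies in the good set $\mathcal G_{m(i)-2}$, so that $m(i)-2$ occurs in the sequence $(n(j))$. For such an $i$, the independence of $\phi^{m(i)-2}$ from the choice of basic template (the stage analogue of Proposition~\ref{stage4matchingprop}) permits us to compute it with the particular template $\tau$ above, giving $\phi^{m(i)-2}(d_{k_{m(i)-2}}(y))=\pi\circ\zeta\circ\phi_\tau(e)=c_{k_{m(i)-4}}(x)$, a cylinder containing $x$. Now fix any $j$, and choose such an $i$ with $m(i)-2\geq n(j)$. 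Then $d_{k_{m(i)-2}}(y)\subseteq d_{k_{n(j)}}(y)$, and both cylinders lie in their respective good sets, so Lemma~\ref{containmentlemma} gives $c_{k_{m(i)-4}}(x)=\phi^{m(i)-2}(d_{k_{m(i)-2}}(y))\subseteq\phi^{n(j)}(d_{k_{n(j)}}(y))$. Hence $x\in\phi^{n(j)}(d_{k_{n(j)}}(y))$, and since $j$ was arbitrary, $x\in\bigcap_{j}\phi^{n(j)}(d_{k_{n(j)}}(y))=\{\psi(y)\}$, i.e.\ $\psi(\phi(x))=x$.

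The main obstacle is the interlocking step in the previous paragraph. A priori the stages at which $c_{k_m}(x)$ is good (the $m(i)$, all $\equiv 2\bmod 4$) and the stages at which $d_{k_n}(y)$ is good (the $n(j)$, all $\equiv 0\bmod 4$) are two unrelated infinite sets, and one must use the explicit description of the good sets $G_{n+2}$, together with the global and intermediate reordering maps and the overlapping-concatenation structure, to see that the forward maps $\phi^{m}$ carry cofinally many good-set cylinders at stage $m$ onto good-set cylinders at stage $m-2$ --- equivalently, that the two nested sequences of cylinders interlock. Everything else (the inverse-concatenation structure, the template-independence of the maps $\phi^n$, and Lemma~\ref{containmentlemma}) is already available.
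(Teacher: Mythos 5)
There is a genuine gap, and you have named it yourself: the ``interlocking'' claim that for cofinally many $i$ the cylinder $d_{k_{m(i)-2}}(y)=\phi^{m(i)}(c_{k_{m(i)}}(x))$ lies in the good set at stage $m(i)-2$ is never established, and nothing in the construction supports it. The good sets are defined purely by position within the towers (unions of every other local block outside the safe zones) and are engineered so that good sets at different stages are \emph{independent} events (Proposition~\ref{independenceofgoodsets}); there is no mechanism forcing the forward maps $\phi^{m}$ to carry good levels onto good levels. So the two sequences $(m(i))$ and $(n(j))$ really are a priori unrelated, and your argument cannot be closed as written.

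The gap is also unnecessary: the hypothesis $\phi(x)\in\mathcal H$ already hands you infinitely many good stages for $y$, so the argument should be anchored there rather than trying to manufacture good stages for $y$ out of good stages for $x$. This is what the paper does. Fix $\varepsilon>0$ and choose $n\equiv 2\bmod 4$ with $1/2^{k_n}<\varepsilon$ such that $n+2$ is a good stage for $\phi(x)$; then choose a \emph{later} good stage $n+4t$ for $x$. Because the stage-$(n+4t)$ partial interval bijections are extensions of (overlapping) concatenations of those at stage $n+2$, one has the almost-commutativity identity $\phi^{n+2}\circ\pi\circ\phi^{n+4t}=\pi$ on the good levels at stage $n+4t$ --- this is exactly the relation your second paragraph unwinds, and it is the same mechanism as in Lemma~\ref{containmentlemma}. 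Since $\phi(x)\in\phi^{n+4t}(c_{k_{n+4t}}(x))$, the level $\pi\bigl(\phi^{n+4t}(c_{k_{n+4t}}(x))\bigr)$ is precisely $d_{k_{n+2}}(\phi(x))$, which is good by the choice of $n+2$; applying $\phi^{n+2}$ yields $c_{k_n}(x)$, so $\psi(\phi(x))\in c_{k_n}(x)$ and $\rho(x,\psi(\phi(x)))<\varepsilon$. Your reduction to showing $x\in\phi^{n(j)}(d_{k_{n(j)}}(y))$ and your unwinding of the concatenation structure are sound; the only repair needed is to run the argument from an arbitrary good stage $n(j)$ of $y$ forward to a later good stage of $x$, instead of from a good stage of $x$ backward.
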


\begin{proof}
Let $\varepsilon > 0$ and let $n \equiv 2 \mod 4$ be such that $\phi(x) \in \mathcal H_{k_{n+2}}$ and $\ds 1/2^{k_n} < \varepsilon$.  Let $t \geq 1$ be such that $x \in \mathcal G_{k_{n + 4t}}$.  Then \[ \phi^{n+2} \circ \pi \circ \phi^{n+4t}(c_{k_{n+4t}}(x))  =  \pi(c_{k_{n+4t}}(x))  =  c_{k_n}(x)\] where, depending on the context, $\pi$ refers either to the map $\ds \pi: \mathcal Q_{k_{n + 4t - 2}} \rightarrow \mathcal Q_{k_{n+2}}$ or to the map $\pi: \mathcal P_{k_{n+4t}} \rightarrow \mathcal P_{k_n}$.  Since $\phi(x) \in \phi^{n + 4t}(c_{k_{n+4t}}(x))$, we have $\phi(x) \in \pi \circ \phi^{n+4t}(c_{k_{n+4t}}(x))$, which implies $\psi(\phi(x)) \in c_{k_n}(x)$.  Therefore $\ds \rho(x, \psi(\phi(x))) < 1/2^{k_n} < \varepsilon$.  The second statement is proved similarly.

\end{proof}

\section{Kakutani Equivalence} \label{kakequivalence}

Let \[X_1 = \bigcap_{i \in \z} T^i(\mathcal G) \;\;\; \mbox{ and } \;\;\; Y_1 = \bigcap_{i \in \z} S^i(\mathcal H).\]  Then $X_1$ and $Y_1$ are invariant $G_\delta$ subsets of full measure.  Let $X_0 = X_1 \cap \phi^{-1}(Y_1)$ and $Y_0 = \phi(X_0)$.  Then $X_0$ and $Y_0$ are full measure subsets because $\phi$ is measure preserving.  And $X_0$ and $Y_0$ are $G_\delta$ subsets because $\phi$ is continuous in the relative topology on $X_1$.  In this section we show that $X_0$ and $Y_0$ are invariant and that $\phi: X_0 \rightarrow Y_0$ is an orbit equivalence that is a conjugacy when restricted to $\mathcal G_2$, the good set at stage $2$.

\begin{lemma} \label{orbitsintoorbits}
For $x \in X_1$, $\phi$ maps the $T$-orbit of $x$ into the $S$-orbit of $\phi(x)$.  And for $y \in Y_1$, $\psi$ maps the $S$-orbit of $y$ into the $T$-orbit of $\psi(y)$.
\end{lemma}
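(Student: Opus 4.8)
The plan is to treat $\phi$ only; the statement for $\psi$ will follow by the identical argument with the two systems exchanged. Since $X_1$ is $T$-invariant and the $T$-orbit of a point is its $\mathbb Z$-orbit, it is enough to show that $\phi(Tx)$ lies on the $S$-orbit of $\phi(x)$ for every $x\in X_1$; applying this with $x$ replaced by $T^jx\in X_1$ and chaining the relations then places every $\phi(T^jx)$ on the $S$-orbit of $\phi(x)$. Moreover, because an orbit is a group, it suffices to produce \emph{some} sign $\epsilon\in\{1,-1\}$ for which $\phi(T^\epsilon x)$ lies on the $S$-orbit of $\phi(x)$.

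Fix $x\in X_1$. First I would locate a useful supply of stages. Since $x$ and all of its $T$-translates lie in $\mathcal G$, we have $c_{k_n}(x)\in\mathcal H_n$ for infinitely many $n\equiv 2\bmod 4$. For such $n$ the good set $\mathcal H_n$ is a union of local blocks of $2^{k_{n-2}}$ consecutive levels of the canonical tower $\mathcal P_{k_n}$, all of which lie away from the global and intermediate safe zones (Section~\ref{thegoodset}); since $k_{n-2}\ge 2$, each such block has length at least two. Hence whenever $c_{k_n}(x)\in\mathcal H_n$, at least one of $c_{k_n}(Tx)$, $c_{k_n}(T^{-1}x)$ is the adjacent level of $\mathcal P_{k_n}$ in the same block and so also lies in $\mathcal H_n$. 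Passing to a subsequence on which a single sign works, I obtain $\epsilon\in\{1,-1\}$ and an infinite set $N$ of stages $n\equiv 2\bmod 4$ such that $c_{k_n}(x)$ and $c_{k_n}(T^\epsilon x)$ are adjacent levels of $\mathcal P_{k_n}$, both in $\mathcal H_n$ and both outside the safe zones.

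Next, for $n\in N$ (large) I would compare $\phi^n(c_{k_n}(x))$ with $\phi^n(c_{k_n}(T^\epsilon x))$. Recall $\phi^n=\pi\circ\zeta\circ\phi_\omega$, where $\phi_\omega$ is the partial interval bijection from $\mathcal P_{k_n}$ into the reordered basic odometer template $\hat\omega=\hat p_2\circ\hat p_1(\omega)$, $\omega\in\mathcal B_{k_n}(Y)$, and let $f$ denote the bijection underlying $\phi_\omega$. Two facts are relevant. Consecutive positions of $\mathcal P_{k_n}$ correspond to $T$-consecutive levels, so $c_{k_n}(x)$ and $c_{k_n}(T^\epsilon x)$ occupy positions $p$ and $p+\epsilon$. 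On the other side, a basic odometer template is exactly one cyclic $S$-orbit segment on $k_n$-cylinders, and by construction the reordering maps move only levels lying in the safe zones; therefore positions $q,q'$ of $\hat\omega$ outside the safe zones are occupied by $k_n$-cylinders that differ by $S^{\,q'-q}$ in $\mathcal Q_{k_n}$. Pushing this through $\zeta$ and then $\pi$, which only coarsen the scale, I conclude that in $\mathcal Q_{k_{n-2}}$ the levels $\phi^n(c_{k_n}(x))$ and $\phi^n(c_{k_n}(T^\epsilon x))$ differ by $M_n:=f(p+\epsilon)-f(p)$ positions, read modulo $2^{k_{n-2}}$.

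The step I expect to be the main obstacle is showing that $M_n$ is eventually independent of $n\in N$. This is the coherence already exploited in the proof of Lemma~\ref{containmentlemma}: the partial interval bijection at a later stage is an extension of a concatenation of translates of the stage-$n$ partial interval bijections, so in a neighborhood of the level $c_{k_n}(x)$ the later-stage map agrees, up to a translation, with the stage-$n$ map; since translation does not alter the difference $f(p+\epsilon)-f(p)$, the value $M_n$ is transported unchanged from one stage of $N$ to the next once $n$ is large. Making this precise requires carefully tracking how the global and intermediate reordering maps of every stage sit inside the safe zones, so that their sole effect on levels of the good sets is the coherent translation above; this is carried out exactly as in \cite{RR1} and \cite{DR}. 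Granting it, write $M$ for the stable value and read points of $Y$ through their $2$-adic addresses: the first $k_{n-2}$ digits of the address of $\phi(T^\epsilon x)$ then agree with those of the address of $S^M\phi(x)$ for every large $n\in N$, and letting $n\to\infty$ yields $\phi(T^\epsilon x)=S^M\phi(x)$. Hence $\phi$ maps the $T$-orbit of $x$ into the $S$-orbit of $\phi(x)$, and the same argument with the systems swapped gives the statement for $\psi$.
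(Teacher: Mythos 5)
There is a genuine gap in your opening reduction, and it is the step on which everything else rests. You correctly reduce the lemma to showing that $\phi(Tx)$ lies on the $S$-orbit of $\phi(x)$ for every $x\in X_1$, but the further reduction---that it suffices to produce \emph{some} sign $\epsilon=\epsilon(x)\in\{1,-1\}$ with $\phi(T^{\epsilon}x)$ on the $S$-orbit of $\phi(x)$---does not work. Identify the $T$-orbit of $x$ with $\z$. Your argument attaches to each $j$ a single link joining $j$ to $j+\epsilon(T^jx)$, and a graph on $\z$ whose only edges are such one-sided links need not be connected: if $\epsilon(x)=-1$ while $\epsilon(Tx)=+1$, no chain of your relations ever connects $\phi(x)$ to $\phi(Tx)$. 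Your stage-selection step does not exclude this configuration: it is consistent with everything you prove that $c_{k_n}(x)$ is the \emph{top} level of its good local block at every stage $n$ for which $c_{k_n}(x)$ lies in the good set (forcing $\epsilon(x)=-1$, since then $c_{k_n}(Tx)$ falls into the adjacent, non-good block), while $c_{k_n}(Tx)$ is the \emph{bottom} level of its good block at every one of its own good stages (forcing $\epsilon(Tx)=+1$). Since the lemma is asserted for every point of $X_1$, not merely almost every point, such exceptional points cannot be discarded.

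The rest of your outline is sound and close in spirit to the paper's argument: the per-stage computation (consecutive non-safe-zone positions of $\hat\omega$ carry $S$-consecutive cylinders, so the displacement is read off from the underlying interval bijection) and the stabilization of that displacement across stages via the extension-of-concatenations property are exactly the mechanisms the paper uses, and the paper treats the stabilization at essentially the same level of detail that you do. What the paper does differently, and what you should imitate to close the gap, is that it never reduces to adjacent levels: for $x_2=T^{-r}x_1$ with $r>0$ arbitrary, it picks a stage $n_0$ at which the global safe zone has height exceeding $r$, so that from then on $c_{k_n}(x_1)$ and $c_{k_n}(x_2)$ travel together through the same tower and the same good blocks, and it then runs the symmetric argument for the forward orbit. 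That removes any dependence on a point-by-point choice of sign.
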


\begin{proof}
Let $x = x_1 \in X_1$ and $x_2 = T^{-r}(x_1)$ for some $r > 0$.  Recall that the bottom global safe zone at stage $n$ has height $h(n) := 2^{k_{n-1}} + 2^{k_{n-1}} \cdot 2^{k_{n-1}}$.  Pick $n_0 \equiv 2 \mod 4$ such that $h(n_0) > r$ and $x_1 \in \mathcal G_{k_{n_0}}$.  Then $x_1$ and $x_2$ are in the same tower in $\mathcal P_{k_{n_0}}$ and $c_{k_{n_0}}(x_2) = T^{-r}(c_{k_{n_0}}(x_1))$.  Moreover, since each $\mathcal G_{k_n}$ consists of complete local blocks from stage $n-2$, for each $n > n_0$ ($n \equiv 2 \mod 4$) such that $x_1 \in \mathcal G_{k_n}$, we have $x_2 \in \mathcal G_{k_n}$ and $c_{k_n}(x_2) = T^{-r}(c_{k_n}(x_1))$.  Fix such $n$.  For $i \in \{1, 2\}$, let $d_i \in \mathcal Q_{k_{n-2}}$ be such that $\phi^n(c_{k_n}(x_i)) = d_i$.  Let $t \in \z$ be such that $|t| < 2^{k_{n-2}}$ and $d_1 = S^t(d_2)$.

Let $m > n$ ($m \equiv 2 \mod 4$) such that $c_{k_m}(x_1) \in \mathcal G_{k_m}$.  For $i \in \{1, 2\}$, let $e_i \in \mathcal Q_{k_{m-2}}$ be levels such that $\phi^m(c_{k_m}(x_i)) = e^i$.  Then because the partial interval bijections $\phi_\omega$ at stage $m$ are extensions of concatenations of those at stage $n$, we have $e_1 = S^t(e_2)$.  Since $m$ was arbitrary, it follows that $\phi(x_1) = S^t(\phi(x_2))$.  Therefore $\phi$ maps the backward $T$-orbit of $x$ into the $S$-orbit of $\phi(x)$.  By similar argument, $\phi$ maps the forward $T$-orbit of $x$ into the $S$-orbit of $\phi(x)$.  The argument for $\psi$ is also similar.
\end{proof}

\begin{theorem} \label{X0theorem}
The sets $X_0 \subset X$ and $Y_0 \subset Y$ are invariant $G_\delta$ subsets of full measure, and $\phi: X_0 \rightarrow Y_0$ carries $T$-orbits bijectively to $S$-orbits.
\end{theorem}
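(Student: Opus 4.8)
The plan is to obtain the statement by combining the observations recorded in the paragraph preceding it with Lemma~\ref{orbitsintoorbits} and the two preceding theorems, namely that $\phi$ is measure preserving and continuous, that $\mathcal G$ and $\mathcal H$ are $G_\delta$ sets of full measure, and that $\psi\circ\phi$ and $\phi\circ\psi$ act as the identity wherever the relevant good-set membership holds. Recall that $X_1$ and $Y_1$ are invariant $G_\delta$ sets of full measure. Since $\phi$ is measure preserving, $X_0=X_1\cap\phi^{-1}(Y_1)$ and $Y_0=\phi(X_0)$ have full measure; since $\phi$ is continuous in the relative topology on $X_1$, the set $X_0$ is $G_\delta$. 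For $Y_0$ I would first record the identity $Y_0=Y_1\cap\psi^{-1}(X_0)$: if $y=\phi(x)$ with $x\in X_0$ then $x\in\mathcal G$ and $\phi(x)\in Y_1\subseteq\mathcal H$, so $\psi(y)=\psi(\phi(x))=x\in X_0$; conversely if $y\in Y_1$ and $\psi(y)\in X_0$ then $y\in\mathcal H$ and $\psi(y)\in X_1\subseteq\mathcal G$, so $\phi(\psi(y))=y$ and $y\in\phi(X_0)=Y_0$. Because $\psi$ is continuous on $Y_1$ and $X_0$ is $G_\delta$, this identity shows $Y_0$ is $G_\delta$.

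Next I would prove invariance. Let $x\in X_0$. Then $Tx\in X_1$ since $X_1$ is invariant, and by Lemma~\ref{orbitsintoorbits} the point $\phi(Tx)$ lies on the $S$-orbit of $\phi(x)\in Y_1$; as $Y_1$ is $S$-invariant this forces $\phi(Tx)\in Y_1$, hence $Tx\in X_1\cap\phi^{-1}(Y_1)=X_0$. Applying the same argument to $T^{-1}$ (Lemma~\ref{orbitsintoorbits} controls the entire orbit) gives $T^{-1}x\in X_0$, so $X_0$ is invariant. For $Y_0$ I would use the identity $Y_0=Y_1\cap\psi^{-1}(X_0)$ together with the $\psi$-half of Lemma~\ref{orbitsintoorbits}: for $y\in Y_0$ we have $Sy\in Y_1$ and $\psi(Sy)$ lies on the $T$-orbit of $\psi(y)\in X_0$, which is contained in $X_0$ by the invariance just proved; hence $Sy\in Y_1\cap\psi^{-1}(X_0)=Y_0$, and likewise for $S^{-1}$.

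Finally I would establish that $\phi$ carries $T$-orbits bijectively to $S$-orbits. Since $X_0\subseteq\mathcal G$ and $\phi(X_0)\subseteq Y_1\subseteq\mathcal H$, the relevant theorem gives $\psi\circ\phi=\mathrm{id}$ on $X_0$, so $\phi|_{X_0}$ is injective, and it is onto $Y_0$ by definition; thus $\phi\colon X_0\to Y_0$ is a bijection with inverse $\psi|_{Y_0}$. Fix $x\in X_0$; by invariance its whole $T$-orbit lies in $X_0$, and Lemma~\ref{orbitsintoorbits} gives that $\phi$ sends it into the $S$-orbit of $\phi(x)$. Conversely, given $S^k\phi(x)\in Y_0$, Lemma~\ref{orbitsintoorbits} places $\psi(S^k\phi(x))$ on the $T$-orbit of $\psi(\phi(x))=x$, say at $T^jx$, and applying $\phi$ (with $\phi\circ\psi=\mathrm{id}$ on $Y_0$) yields $\phi(T^jx)=S^k\phi(x)$. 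So $\phi$ maps the $T$-orbit of $x$ onto the $S$-orbit of $\phi(x)$, and since $\phi$ is injective on all of $X_0$ this correspondence of orbits is a bijection. The main obstacle here is purely bookkeeping: at each use of the mutual-inversion theorems one must verify the good-set hypotheses (this is exactly why the chain $X_0\subseteq X_1\subseteq\mathcal G$ and $\phi(X_0)\subseteq Y_1\subseteq\mathcal H$ is needed), and one must run the orbit arguments in both the forward and backward directions.
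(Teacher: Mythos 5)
Your proposal is correct and follows essentially the same route as the paper: invariance of $X_0$ via Lemma~\ref{orbitsintoorbits} together with the invariance of $X_1$ and $Y_1$, and surjectivity onto orbits via $\psi$ and the mutual-inversion theorem. The one place you go beyond the paper is the identity $Y_0 = Y_1 \cap \psi^{-1}(X_0)$, which gives a genuinely cleaner justification that $Y_0$ is $G_\delta$ (the paper simply asserts this from continuity of $\phi$, even though a continuous image of a $G_\delta$ set need not be $G_\delta$ in general).
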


\begin{proof}
We have already seen that $X_0$ and $Y_0$ are $G_\delta$ subsets of full measure.  Let $x \in X_0$.  Then $x \in X_1$ and $\phi(x) \in Y_1$ by definition.  Let $x' = T^r(x)$ for some $r \in \z$.  Then $x' \in X_1$ because $X_1$ is $T$-invariant.  And $\phi(x') \in Y_1$ by Lemma \ref{orbitsintoorbits} (and because $\phi(x) \in Y_1$).  Therefore $x' \in X_0$, so $X_0$ is $T$-invariant.  Similarly, $Y_0$ is $S$-invariant.

Now suppose $y$ is a point in the $S$-orbit of $\phi(x)$.  Then $y \in Y_1 \subset \mathcal H$, so $\psi(y)$ is in the orbit of $x$.  Hence $\psi(y) \in \mathcal G$.  Therefore $\phi$ carries the orbit of $x$ onto the orbit of $\phi(x)$.

\end{proof}

\begin{theorem}
The map $\phi$ is a conjugacy between the two induced maps $T_{\mathcal G_2}$ and $S_{\phi(\mathcal G_2)}$.  Both $\mathcal G_2$ and $\phi(\mathcal G_2)$ are nearly clopen.  Therefore $\phi$ is a nearly continuous Kakutani equivalence of $T$ and $S$.
\end{theorem}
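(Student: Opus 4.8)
The plan is to harvest our main result from the structural facts already established, checking the three assertions of the theorem in turn and then invoking the definition of a nearly continuous even Kakutani equivalence from Section~\ref{introduction}. The ``nearly clopen'' claims are immediate: by definition $\mathcal G_2=\{u_{k_2}(2),\overline u_{k_2}(2)\}$ is a union of two $k_2$-canonical cylinders, each clopen in $X$ (Section~\ref{cantowermorse}), so $\mathcal G_2$ is clopen and $\mathcal G_2\cap X_0$ is relatively clopen in $X_0$. Since $\phi\colon X_0\to Y_0$ is a homeomorphism between invariant $G_\delta$ sets of full measure (Theorem~\ref{X0theorem} and the continuity theorems of Section~\ref{convergenceofPIBs}), $\phi(\mathcal G_2\cap X_0)$ is relatively clopen in $Y_0$; writing it as $V\cap Y_0$ with $V$ open and as $F\cap Y_0$ with $F$ closed, and using $\nu(Y\setminus Y_0)=0$, we conclude that $\phi(\mathcal G_2)$ lies within a $\nu$-null set of both an open and a closed subset of $Y$, i.e.\ it is nearly clopen. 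Also $\nu(\phi(\mathcal G_2))=\mu(\mathcal G_2)>0$ because $\phi$ is measure preserving.

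The substantive point is that $\phi$ restricts to a measurable isomorphism of the induced systems $T_{\mathcal G_2}$ and $S_{\phi(\mathcal G_2)}$. By Theorem~\ref{X0theorem} the map $\phi$ carries each $T$-orbit bijectively onto an $S$-orbit, and being injective it therefore carries the points of the orbit of $x$ that lie in $\mathcal G_2$ bijectively onto the points of the orbit of $\phi(x)$ that lie in $\phi(\mathcal G_2)$. Hence it suffices to show that $\phi$ is \emph{orientation preserving along orbits}: for every $x\in X_0$, the cocycle $j\mapsto t(j)$ determined by $\phi(T^jx)=S^{t(j)}\phi(x)$ is strictly increasing. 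Granting this, the successive visits of the $T$-orbit of $x$ to $\mathcal G_2$ are sent, in order, to the successive visits of the $S$-orbit of $\phi(x)$ to $\phi(\mathcal G_2)$, which is precisely the assertion $\phi\circ T_{\mathcal G_2}=S_{\phi(\mathcal G_2)}\circ\phi$ on $\mathcal G_2\cap X_0$; together with the fact that $\phi$ is a bijection of $\mathcal G_2\cap X_0$ onto $\phi(\mathcal G_2)\cap Y_0$ (with inverse $\psi$, by the final theorem of Section~\ref{convergenceofPIBs}) this shows that $\phi|_{\mathcal G_2}$ is the desired isomorphism. One may note in passing that $\mathcal G_2$ has constant first-return time $4$ under $T$, so $T_{\mathcal G_2}=T^4|_{\mathcal G_2}$, although this is not needed.

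To prove the orientation statement, fix $x\in X_0$ and $j\in\z$, and choose $n\equiv 2\bmod 4$ so large that $c_{k_n}(x)\in\mathcal G_n$ and the bottom global safe zone at stage $n$ has height exceeding $|j|+1$; this is possible because $x\in\mathcal G$ and the safe-zone heights grow without bound. As in the proof of Lemma~\ref{orbitsintoorbits}, $c_{k_n}(T^jx)=T^j c_{k_n}(x)$ lies, together with $c_{k_n}(x)$, in a good-set local block of $\mathcal P_{k_n}$ that avoids the safe zones, where $\phi^n$ is an extension of a concatenation of the stage-$2$ partial interval bijections. The key combinatorial input is that, restricted to good-set positions, the reordering maps $\hat p_1$ and $\hat r_m$ act by rigid translation (they reorder only levels lying inside the global and intermediate safe zones), so that on good-set positions every $\phi^n$ respects the tower ordering; this, together with the fact that $\mathcal G_2$ avoids the top and bottom levels and the global cuts of every stage-$2$ template (Propositions~\ref{nobottomortop} and~\ref{nobottomtoporcut}), forces $t(j)<t(j+1)$. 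Letting $n\to\infty$ in the usual way finishes the verification.

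Assembling the three ingredients — $\phi\colon X_0\to Y_0$ is a homeomorphism between invariant $G_\delta$ sets of full measure, it is an orbit equivalence (Theorem~\ref{X0theorem}), and it restricts to a measurable isomorphism of $T_{\mathcal G_2}$ and $S_{\phi(\mathcal G_2)}$ with $\mathcal G_2\cap X_0$ nearly clopen and of positive measure — yields, by the definition in Section~\ref{introduction}, a nearly continuous even Kakutani equivalence of the Morse minimal system and the binary odometer, which is our main result. The main obstacle is exactly the orientation/monotonicity of $\phi$ along orbits: an arbitrary orbit equivalence can reverse the local orientation of orbit segments, and ruling this out here relies on the specific, carefully ordered way the stage-$2$ maps are embedded inside the later-stage maps, together with the choice of $\mathcal G_2$ as a set sitting in the ``middle'' of every tower, clear of cuts, tops, and bottoms.
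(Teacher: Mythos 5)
Your overall strategy coincides with the paper's: reduce the conjugacy of the induced maps to an order-preservation statement for $\phi$ along orbits, and your treatment of the nearly clopen claim (which the paper does not even spell out) is fine, since $\mathcal G_2$ is a union of two clopen cylinders and $\phi(\mathcal G_2)$ is relatively clopen in the full-measure $G_\delta$ set $Y_0$. But the pivotal step is wrong as stated. You reduce to the claim that for every $x \in X_0$ the \emph{full} cocycle $j \mapsto t(j)$ defined by $\phi(T^j x) = S^{t(j)}\phi(x)$ is strictly increasing. By Theorem \ref{X0theorem} this cocycle is a bijection of $\z$ with $t(0)=0$, and a strictly increasing bijection of $\z$ fixing $0$ is the identity; your claim would therefore force $\phi \circ T = S \circ \phi$ on $X_0$, i.e.\ a measurable isomorphism of the Morse minimal system with the binary odometer. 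No such isomorphism exists (the Morse system has a nontrivial continuous spectral component over its odometer factor), so the statement you reduce to is false. The true statement, and the one the paper proves, is the weaker one: order is preserved only for pairs of orbit points that both lie in $\mathcal G_2 \cap X_0$; between consecutive visits to $\mathcal G_2$ the cocycle is allowed to (and does) fail monotonicity, which is exactly why $\phi$ is a Kakutani equivalence rather than an isomorphism.

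Your argument for the orientation claim also does not close the gap even for pairs in $\mathcal G_2$. You fix a single stage $n$ at which $x$ is good and the safe zone height exceeds $|j|$, and assert that $x$ and $T^j x$ then sit in a common good-set local block on which the reorderings act by rigid translation. But the good set at stage $n$ consists of every \emph{other} local block, so $T^j x$ can land in a bad block at stage $n$ even when $x$ is good; and, more seriously, two points $x_1$ and $x_2 = T^r x_1$ of $\mathcal G_2 \cap X_0$ may lie in different towers of $\mathcal P_{k_m}$ for every $m$ below some minimal $\hat n > 2$. That is precisely the case the paper's proof isolates: it takes $\hat n$ minimal with $c_{k_{\hat n}}(x_2) = T^r(c_{k_{\hat n}}(x_1))$, and for $\hat n > 2$ argues that the stage-$(\hat n - 4)$ partial interval bijections acting on $x_1$ and $x_2$ occur in the correct order inside the concatenation that builds the stage-$\hat n$ map, an order which is then carried through all later concatenations. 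Your proof needs this case analysis (or an equivalent mechanism for points separated by more than one early-stage tower) to be complete.
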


\begin{proof}

Let $x_1$ and $x_2 = T^r(x_1)$ be two points in $\mathcal G_2 \cap X_0$.  Then, by Theorem \ref{X0theorem}, $\phi(x_2) = S^k(\phi(x_1))$ for some $k \in \z$.  We wish to show that $r$ and $k$ have the same sign.  This will imply that $\phi$ restricted to $\mathcal G_2 \cap X_0$ is order preserving on orbits, and hence a conjugacy between the induced maps.

As we saw in the proof of Lemma \ref{orbitsintoorbits}, $c_{k_n}(x_2) = T^r(c_{k_n}(x_1))$ for all sufficiently large $n \equiv 2 \mod 4$.  Let $\hat n$ be minimal among such $n$.  If $\hat n = 2$, then $r$ and $k$ automatically have the same sign because each partial interval bijection $\phi_\omega$ at stage $2$ maps the levels in $\mathcal G_2$ in an order-preserving way, and this is then carried through the diagram via concatenations.

If $\hat n > 2$, then because $\hat n$ is minimal, $c_{k_{\hat n - 4}}(x_2)$ and $c_{k_{\hat n-4}}(x_1)$ must lie in different towers in $\mathcal P_{k_{\hat n - 4}}$.  And the partial interval bijections at stage $\hat n$ are extensions of concatenations of those at stage $\hat n - 4$.  So if $r > 0$, then the stage-($\hat n - 4$) partial interval bijection that acts on $x_1$ in stage $\hat n$ comes before the stage-($\hat n - 4$) partial interval bijection that acts on $x_2$ in stage $\hat n$.  This order-preservation at stage $\hat n$ is then carried through the diagram via concatenations, so $k > 0$.  Similarly, $r < 0$ implies $k < 0$.  

\end{proof}

\end{document}